\newtheorem{theorem}{Theorem}[section]
\newtheorem{corollary}[theorem]{Corollary}
\newtheorem{lemma}[theorem]{Lemma}
\newtheorem{proposition}[theorem]{Proposition}
\theoremstyle{definition}
\newtheorem{definition}[theorem]{Definition}
\newtheorem{example}[theorem]{Example}
\newtheorem{remark}[theorem]{Remark}
\newcommand\R{{\mathbb{R}}}
\newcommand\C{{\mathbb{C}}}
\newcommand\Z{{\mathbf{Z}}}
\newcommand\Q{{\mathbf{Q}}}
\newcommand\F{{\mathbf{F}}}
\newcommand\eps{{\varepsilon}}
\title[Polynomial method in combinatorics]{Algebraic combinatorial geometry: the polynomial method in arithmetic combinatorics, incidence combinatorics, and number theory}
\address[tao@@math.ucla.edu]{Terence Tao, UCLA Department of Mathematics, Los Angeles, CA 90095-1555.}
\author[Terence Tao]{Terence Tao\thanks{The author is supported by a Simons Investigator grant, the James and Carol Collins Chair, the Mathematical Analysis \& Application Research Fund Endowment, and by NSF grant DMS-1266164.  He also thanks Ameera Chowdhury, Ben Green, Felipe Voloch and Michael Zieve for providing additional examples of the polynomial method, and to Holger Brenner, Fabrice Orgogozo, Kaloyan Slavov, Jonathan Steinbuch and Andreas Wenz for pointing out errors in previous versions of the manuscript.}}
\begin{document}

\maketitle

\begin{abstract}  Arithmetic combinatorics is often concerned with the problem of controlling the possible range of behaviours of arbitrary finite sets in a group or ring with respect to arithmetic operations such as addition or multiplication.  Similarly, combinatorial geometry is often concerned with the problem of controlling the possible range of behaviours of arbitrary finite collections of geometric objects such as points, lines, or circles with respect to geometric operations such as incidence or distance.  Given the presence of arbitrary finite sets in these problems, the methods used to attack these problems have primarily been combinatorial in nature.  In recent years, however, many outstanding problems in these directions have been solved by algebraic means (and more specifically, using tools from algebraic geometry and/or algebraic topology), giving rise to an emerging set of techniques which is now known as the \emph{polynomial method}.  Broadly speaking, the strategy is to capture (or at least partition) the arbitrary sets of objects (viewed as points in some configuration space) in the zero set of a polynomial whose degree (or other measure of complexity) is under control; for instance, the degree may be bounded by some function of the number of objects.  One then uses tools from algebraic geometry to understand the structure of this zero set, and thence to control the original sets of objects.

While various instances of the polynomial method have been known for decades (e.g. Stepanov's method, the combinatorial nullstellensatz, or Baker's theorem), the general theory of this method is still in the process of maturing; in particular, the limitations of the polynomial method are not well understood, and there is still considerable scope to apply deeper results from algebraic geometry or algebraic topology to strengthen the method further.  In this survey we present several of the known applications of these methods, focusing on the simplest cases to illustrate the techniques.  We will assume as little prior knowledge of algebraic geometry as possible.

\end{abstract}

\begin{classification}
05B25,11T06,12D10,51H10
\end{classification}

\section{The polynomial method}

The purpose of this article is to describe an emerging set of techniques, now known as the \emph{polynomial method}, for applying tools from algebraic geometry (and sometimes algebraic topology) to combinatorial problems involving either arithmetic structure (such as sums and products) or geometric structure (such as the incidence relation between points and lines).  With this method, one overlays a geometric structure, such as a hypersurface cut out by a polynomial, on an existing combinatorial structure, such as a configuration of points and lines, and uses information on the former coming from algebraic geometry to deduce combinatorial consequences on the latter structure.  While scattered examples of this method have appeared in the literature for decades in number theory (particularly through Stepanov's method, or Baker's theorem) and in arithmetic combinatorics (through the combinatorial nullstellensatz), it is only in the last few years that the outlines of a much broader framework for this method have begun to appear.  In this survey, we collect several disparate examples, both old and new, of this method in action, with an emphasis on the features that these instances of the polynomial method have in common.  The topics covered here overlap to some extent with those in the recent survey of Dvir \cite{dvir-survey}.

Let us now set up some basic notation for this method.  Algebraic geometry functions best when one works over an ambient field which is algebraically closed, such as the complex numbers $\C$.  On the other hand, many problems in combinatorial geometry or arithmetic combinatorics take place over non-algebraically closed fields\footnote{One is often also interested in working over other commutative rings than fields, and in particular in the integers $\Z$; see Section \ref{integer-sec}.}, such as the real numbers $\R$ or finite fields $\F_q$ of some order $q$.  It is thus convenient to work simultaneously over two different fields: a ``combinatorial'' field $F$ (which in applications will be $\R$ or $\F_q$), enclosed in a ``geometric'' field $\overline{F}$ (e.g. $\C$ or $\overline{\F_q} = \lim_{\leftarrow} \F_{q^n}$), which is an algebraic closure of $F$.  We will use the adjective ``geometric'' to denote objects defined over $\overline{F}$, and to which one can assign geometric concepts such as dimension, degree, smoothness, tangency, etc., and use the prefix ``$F$-'' to denote objects\footnote{In particular, objects defined over $\R$ will be called ``real''.  In arithmetic geometry applications, objects defined over a finite field $F$ are sometimes called ``arithmetic'', though in our context ``combinatorial'' might be more appropriate.} defined instead over $F$, to which we will tend to assign combinatorial concepts such as cardinality, incidence, partitioning, etc..  

An \emph{$F$-polynomial} (or \emph{polynomial}, for short) in $n$ variables is defined to be any formal expression $P(x_1,\ldots,x_n)$ of the form
$$ P(x_1,\ldots,x_n) = \sum_{i_1,\ldots,i_n \geq 0} c_{i_1,\ldots,i_n} x_1^{i_1} \ldots x_n^{i_n}$$
where the coefficients $c_{i_1,\ldots,i_n}$ lie in $F$, and only finitely many of the coefficients are non-zero.  The \emph{degree} of this polynomial is the largest value of $i_1+\ldots+i_n$ for which $c_{i_1,\ldots,i_n}$ is non-zero; we will adopt the convention that the zero polynomial (which we will also call the \emph{trivial polynomial}) has degree $-\infty$.  The space of $F$-polynomials in $n$ variables will be denoted $F[x_1,\ldots,x_n]$.  This space is of course contained in the larger space $\overline{F}[x_1,\ldots,x_n]$ of \emph{geometric} polynomials whose coefficients now lie in $\overline{F}$, but we will rarely need to use this space.

Of course, by interpreting the indeterminate variables $x_1,\ldots,x_n$ as elements of $F$, we can view an $F$-polynomial $P \in F[x_1,\ldots,x_n]$ as a function from $F^n$ to $F$; it may also be viewed as a function from $\overline{F}^n$ to $\overline{F}$.  By abuse\footnote{One should caution though that two polynomials may be different even if they define the same function from $F^n$ to $F$.  For instance, if $F = \F_q$ is a finite field, the polynomials $x^q$ and $x$ in $F[x]$ give rise to the same function from $F$ to $F$, but are not the same polynomial (note for instance that they have different degree).  On the other hand, this ambiguity does not occur in the algebraic closure $\overline{F}$, which is necessarily infinite; thus, if one wishes, one may identify $P$ with the function $P \colon \overline{F}^n \to \overline{F}$, but not necessarily with the function $P \colon F^n \to F$ (unless $F$ is infinite or $P$ has degree less than $|F|$, in which case no ambiguity occurs, thanks to the Schwartz-Zippel lemma (see Lemma \ref{sz} below)).}  of notation, we denote both of these functions $P \colon F^n \to F$ and $P \colon \overline{F}^n \to \overline{F}$ by $P$.  This defines two closely related sets, the \emph{geometric hypersurface}
$$ Z(P) = Z(P)[\overline{F}] := \{ (x_1,\ldots,x_n) \in \overline{F}^n: P(x_1,\ldots,x_n) = 0 \}$$
and the \emph{$F$-hypersurface}
$$ Z(P)[F] := \{ (x_1,\ldots,x_n) \in F^n: P(x_1,\ldots,x_n) = 0 \}$$
(also known as the set of $F$-points of the geometric hypersurface).  We say that the geometric hypersurface $Z(P)$ has \emph{degree} $d$ if $P$ has degree $d$.  More generally, given a collection $P_1,\ldots,P_k \in F[x_1,\ldots,x_n]$ of polynomials, we may form the\footnote{In this survey we do not require varieties to be irreducible.} \emph{geometric variety}
$$ Z(P_1,\ldots,P_k) = Z(P_1,\ldots,P_k)[\overline{F}] = \bigcap_{i=1}^k Z(P_i)[\overline{F}]$$
and the \emph{$F$-variety}
$$ Z(P_1,\ldots,P_k)[F] = \bigcap_{i=1}^k Z(P_i)[F]$$
cut out by the $k$ polynomials $P_1,\ldots,P_k$.
For instance, if $x_0, v_0 \in F^n$ with $v_0$ non-zero, the \emph{geometric line}
$$ \ell_{x_0,v_0} = \ell_{x_0,v_0}[\overline{F}] := \{ x_0 + t v_0: t \in \overline{F} \}$$
is a geometric variety (cut out by $n-1$ affine-linear polynomials), and similarly the \emph{$F$-line}
$$ \ell_{x_0,v_0}[F] = \{ x_0 + t v_0: t \in F \}$$
is the associated $F$-variety.

When the ambient dimension $n$ is equal to $1$, $F$-hypersurfaces can be described exactly:

\begin{lemma}[Hypersurfaces in one dimension]\label{easy}  Let $d \geq 0$.
\begin{itemize}
\item[(i)]  (Factor theorem) If $P \in F[x]$ is a non-trivial polynomial of degree at most $d$, then $Z(P)[F]$ has cardinality at most $d$.
\item[(ii)]  (Interpolation theorem)  Conversely, if $E \subset F$ has cardinality at most $d$, then there is a non-trivial polynomial $P \in F[x]$ with $E \subset Z(P)[F]$.
\end{itemize}
\end{lemma}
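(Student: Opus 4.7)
The plan is to handle the two parts separately, with part (ii) being essentially immediate and part (i) requiring a standard inductive argument based on polynomial division.

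For part (ii), my approach is entirely constructive: given $E = \{a_1,\ldots,a_k\} \subset F$ with $k \leq d$, I would simply exhibit the polynomial
\[ P(x) := \prod_{i=1}^{k} (x - a_i) \in F[x], \]
which has degree exactly $k \leq d$, is non-trivial (the leading coefficient is $1$), and visibly vanishes on every element of $E$. No obstacle here.

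For part (i), I plan to proceed by strong induction on the degree $d$. The base case $d = 0$ is trivial: a nonzero constant polynomial has no roots. For the inductive step, suppose the statement is known for all degrees less than $d$, and let $P \in F[x]$ be non-trivial of degree at most $d$. If $Z(P)[F]$ is empty there is nothing to prove; otherwise pick some root $a \in Z(P)[F]$. The key tool is the Euclidean division algorithm in $F[x]$: dividing $P(x)$ by $x - a$ yields $P(x) = (x-a) Q(x) + r$ with $r \in F$, and substituting $x = a$ forces $r = 0$. Thus $P(x) = (x-a) Q(x)$ where $Q \in F[x]$ has degree at most $d-1$ and is non-trivial (since $P$ is). Any $F$-root of $P$ other than $a$ must be a root of $Q$, so by induction
\[ |Z(P)[F]| \leq 1 + |Z(Q)[F]| \leq 1 + (d-1) = d, \]
completing the induction.

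The only real subtlety is ensuring that Euclidean division is available over a general field $F$, and that $F[x]$ has no zero divisors (so that $(x-a) Q(x) = 0$ forces $Q = 0$); both follow from $F$ being a field. I anticipate no genuine obstacle, as this is the classical factor theorem.
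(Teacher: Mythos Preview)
Your proposal is correct and follows essentially the same approach as the paper: for (i) you use induction on $d$ via the factorisation $P(x)=(x-a)Q(x)$, and for (ii) you take the explicit product $\prod_{p\in E}(x-p)$, which is precisely the paper's first argument (the paper also mentions a linear-algebra alternative via the evaluation map, but this is offered only as a second proof).
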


\begin{proof}  If $Z(P)[F]$ contains a point $p$, then $P$ factors as $P(x) = (x-p) Q(x)$ for some polynomial $Q$ of degree at most $d-1$, and (i) follows from induction on $d$.  For (ii), one can simply take $P(x) :=\prod_{p \in E} (x-p)$.  Alternatively, one can use linear algebra: the space of polynomials $P$ of degree at most $d$ is a $d+1$-dimensional vector space over $F$, while the space $F^E$ of tuples $(y_p)_{p \in E}$ is at most $d$ dimensional.  Thus, the evaluation map $P \mapsto (P(p))_{p \in E}$ between these two spaces must have a non-trivial kernel, and (ii) follows.
\end{proof}

While these one-dimensional facts are almost trivial, they do illustrate three basic phenomena:

\begin{itemize}
\item[(a)]  ``Low-degree'' $F$-hypersurfaces tend to be ``small'' in a combinatorial sense.
\item[(b)]  Conversely, ``small'' combinatorial sets tend to be captured by ``low-degree'' $F$-hypersurfaces.
\item[(c)]  ``Low-complexity'' $F$-algebraic sets (such as $\{ x \in F: P(x)=0\}$) tend to exhibit size dichotomies; either they are very small or very large (e.g. $\{ x \in F: P(x)=0\}$ is very small when $P$ is non-zero and very large when $P$ is zero).
\end{itemize}

These phenomena become much more interesting and powerful in higher dimensions.  For instance, we have the following higher-dimensional version of (a):

\begin{lemma}[Schwartz-Zippel lemma]\label{sz}  \cite{schwartz,zippel} Let $F$ be a finite field, let $n \geq 1$, and let $P \in F[x_1,\ldots,x_n]$ be a polynomial of degree at most $d$.  If $P$ does not vanish entirely, then
$$ |Z(P)[F]| \leq d |F|^{n-1}.$$
\end{lemma}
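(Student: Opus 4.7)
The plan is to induct on the number of variables $n$, taking Lemma \ref{easy}(i) as the base case. When $n=1$, a non-trivial polynomial of degree at most $d$ has at most $d$ roots in $F$, which matches the desired bound $d|F|^0 = d$.

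For the inductive step, I would view $P$ as a polynomial in $x_n$ whose coefficients are polynomials in the remaining variables, writing
\[ P(x_1,\ldots,x_n) = \sum_{j=0}^{d} x_n^j\, Q_j(x_1,\ldots,x_{n-1}), \]
where each $Q_j \in F[x_1,\ldots,x_{n-1}]$ has degree at most $d-j$. Let $k$ be the largest index for which $Q_k$ is not identically zero; since $P$ is non-trivial such a $k$ exists, and $Q_k$ is a non-trivial polynomial in $n-1$ variables of degree at most $d-k$.

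Next I would partition $Z(P)[F]$ according to the value of $Q_k$ on the first $n-1$ coordinates. For tuples $(x_1,\ldots,x_{n-1}) \in F^{n-1}$ with $Q_k(x_1,\ldots,x_{n-1}) \neq 0$, the specialization $t \mapsto P(x_1,\ldots,x_{n-1},t)$ is a non-trivial one-variable polynomial of degree exactly $k$, so by Lemma \ref{easy}(i) it has at most $k$ roots; summing trivially over at most $|F|^{n-1}$ such tuples contributes at most $k|F|^{n-1}$ points. For tuples with $Q_k(x_1,\ldots,x_{n-1}) = 0$, the inductive hypothesis applied to $Q_k$ gives at most $(d-k)|F|^{n-2}$ such tuples; bounding the choice of $x_n$ trivially by $|F|$ contributes at most $(d-k)|F|^{n-1}$ points. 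Adding these two estimates yields the claimed bound $d|F|^{n-1}$.

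The only subtlety, which is the point at which one might slip, is the choice of the ``leading coefficient'' $Q_k$: one cannot just take $Q_d$ since that might vanish identically even when $P$ does not. Picking the largest $k$ with $Q_k \not\equiv 0$ is what makes the generic-fiber count ($k$ roots per fiber) and the degenerate-base count (degree $d-k$ in $n-1$ variables) dovetail so that $k$ and $d-k$ sum to $d$.
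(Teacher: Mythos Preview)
Your proof is correct, and indeed this is the textbook presentation of Schwartz--Zippel. Both your argument and the paper's induct on $n$ with Lemma~\ref{easy}(i) as the base case, but the inductive step is organised differently. You fibre $F^n$ over the first $n-1$ coordinates: the ``bad'' base points are those where the leading coefficient $Q_k$ vanishes, and you bound those by a single application of the inductive hypothesis to $Q_k$ (degree $\leq d-k$), while on the ``good'' base you bound each fibre by $k$ using the one-variable factor theorem. The paper instead fibres over the last coordinate: it collects the values $t$ for which the slice $P_t \in F[x_1,\ldots,x_{n-1}]$ vanishes identically into a set $E$, factors $P = \bigl(\prod_{t\in E}(x_n-t)\bigr) R$, and then applies the inductive hypothesis separately to each remaining slice $R_{t'}$.

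Your route is arguably cleaner: it invokes the inductive hypothesis exactly once (on $Q_k$) rather than once per good slice, and it avoids the factoring step. The paper's route, on the other hand, more visibly extends the one-dimensional ``pull out a linear factor'' argument, and its structure generalises naturally to the multiplicity version (Lemma~\ref{sz-mult}), where one pulls out powers $(x_n-t)^{a_t}$.
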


\begin{proof}  This will be an iterated version of the argument used to prove Lemma \ref{easy}(i).  We induct on the dimension $n$.  The case $n=1$ follows from Lemma \ref{easy}(i), so suppose inductively that $n > 1$ and that the claim has already been proven for $n-1$.

For any $t \in F$, let $P_t \in F[x_1,\ldots,x_{n-1}]$ be the polynomial formed by substituting $t$ for $x_n$ in $P$:
$$ P_t(x_1,\ldots,x_{n-1}) := P(x_1,\ldots,x_{n-1},t).$$
This is a polynomial of degree at most $d$.  If it vanishes, then we can factor $P(x_1,\ldots,x_n) = (x_n-t) Q(x_1,\ldots,x_n)$ for some polynomial of degree at most $d-1$; this is obvious for $t=0$, and the general case follows by translating the $x_n$ variable.  Furthermore, for any $t' \neq t$, we see that $Q_{t'}$ vanishes if and only if $P_{t'}$ vanishes.  If we let $E$ be the set of all $t \in F$ for which $P_t$ vanishes, we conclude upon iteration that $|E| \leq d$, and that
$$ P(x_1,\ldots,x_n) = (\prod_{t \in E} (x_n-t)) R(x_1,\ldots,x_n)$$
for some polynomial $R \in F[x_1,\ldots,x_n]$ of degree at most $d-|E|$, such that $R_{t'}$ does not vanish for any $t' \not \in E$.  From this factorisation we see that
$$ Z(P)[F] \subset (F^{n-1} \times E) \cup \bigcup_{t' \in F \setminus E} (Z(R_{t'})(F) \times \{t'\}).$$
By induction hypothesis we have $|Z(R_{t'})(F) \times \{t'\}| \leq (d-|E|) |F|^{n-2}$, and so
\begin{align*}
|Z(P)[F]| &\leq |F|^{n-1} |E| + \sum_{t' \in F \setminus E} (d-|E|) |F|^{n-2} \\
&\leq |F|^{n-1} |E| + |F| (d-|E|) |F|^{n-2} \\
&= d |F|^{n-1}
\end{align*}
as required.
\end{proof}

\begin{remark} This is by no means the only statement one can make about the zero set $Z(P)[F]$.  For instance, the classical Chevalley-Warning theorem (Theorem \ref{chev} below) asserts that if $P_1,\ldots,P_k$ are polynomials with $\deg(P_1)+\ldots+\deg(P_k) < n$, then $|Z(P_1)[F] \cap \ldots \cap Z(P_k)[F]|$ is divisible by the characteristic of $F$.  Another useful structural fact about the zero set $Z(P)[F]$ is the \emph{combinatorial nullstellensatz} of Alon, discussed in Section \ref{null-sec}.  (Indeed, the nullstellensatz may be used to prove a weak version of the Chevalley-Warning theorem; see \cite{alon}.)  The \emph{Lang-Weil inequality} \cite{lang} gives a bound of the form $|Z(P)[F]| = c |F|^{n-1} + O_{d,n}( |F|^{d-1/2} )$, where $c$ is the number of distinct (up to scalars) irreducible factors of $P$ in $\overline{F}[x_1,\ldots,x_n]$ that are defined over $F$ (i.e. are $F$-polynomials up to scalars), and $O_{d,n}(|F|^{d-1/2})$ is a quantity bounded in magnitude by $C_{d,n} |F|^{d-1/2}$ for some quantity $C_{d,n}$ depending only on $d,n$.
\end{remark}

Similarly, we have the following higher-dimensional version of (b):

\begin{lemma}[Interpolation]\label{interp}  Let $F$ be a field, let $n \geq 1$ be an integer, and $d \geq 0$.  If $E \subset F^n$ has cardinality less than $\binom{d+n}{n} := \frac{(d+n) \ldots (d+1)}{n!}$, then there exists a non-zero polynomial $P \in F[x_1,\ldots,x_n]$ of degree at most $d$ such that $E \subset Z(P)[F]$.
\end{lemma}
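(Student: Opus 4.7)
The plan is to mimic the linear-algebra proof of the one-dimensional interpolation statement in Lemma \ref{easy}(ii), now in $n$ variables.

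First I would identify the correct dimension count. Let $V_d \subset F[x_1,\ldots,x_n]$ denote the space of polynomials of degree at most $d$. A basis for $V_d$ is given by the monomials $x_1^{i_1} \cdots x_n^{i_n}$ with $i_1+\ldots+i_n \leq d$, and a standard combinatorial counting (e.g.\ stars and bars, counting nonnegative integer solutions to $i_1+\ldots+i_n+i_{n+1}=d$ where $i_{n+1}$ is a slack variable) shows that the number of such monomials is $\binom{d+n}{n}$. Hence $\dim_F V_d = \binom{d+n}{n}$.

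Next I would introduce the evaluation map. Given $E \subset F^n$, define the $F$-linear map
$$ \mathrm{ev}_E : V_d \to F^E, \qquad P \mapsto (P(p))_{p \in E}. $$
The codomain $F^E$ has dimension $|E|$ over $F$. By hypothesis $|E| < \binom{d+n}{n} = \dim V_d$, so $\mathrm{ev}_E$ cannot be injective. Therefore its kernel contains some nonzero $P \in V_d$, which by construction vanishes on every point of $E$; that is, $E \subset Z(P)[F]$, as required.

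I do not expect any real obstacle: the argument is pure linear algebra, and the only nontrivial ingredient is the monomial count $\binom{d+n}{n}$. The one minor care is to emphasize that we want $P$ nonzero as a polynomial (not merely nonzero as a function), but since we are exhibiting $P$ as a nonzero element of the kernel of $\mathrm{ev}_E$ inside $V_d$ — the vector space of formal polynomials — this is automatic. No appeal to the Schwartz-Zippel lemma or to properties of $F$ (finiteness, characteristic, etc.) is needed, which is why the statement holds for arbitrary fields.
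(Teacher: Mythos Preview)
Your proposal is correct and matches the paper's proof essentially line for line: the paper also lets $V$ be the space of polynomials of degree at most $d$, notes that $\dim V = \binom{d+n}{n}$ by a standard combinatorial count, and concludes that the evaluation map $P \mapsto (P(p))_{p \in E}$ to $F^E$ has nontrivial kernel. Your added remarks on the stars-and-bars count and on ``nonzero as a polynomial versus as a function'' are fine elaborations but not needed for the argument.
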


Using the crude bound $\binom{d+n}{n} \geq \frac{d^n}{n^n}$, we conclude as a corollary that every finite subset $E$ of $F^n$ is contained in a $F$-hypersurface of degree at most $n |E|^{1/n}$.

\begin{proof}  We repeat the second proof of Lemma \ref{easy}(ii).  If we let $V$ be the vector space of polynomials $P \in F[x_1,\ldots,x_n]$ of degree at most $d$, then a standard combinatorial computation reveals that $V$ has dimension $\binom{d+n}{n}$.  If $|E| < \binom{d+n}{n}$, then the linear map $P \mapsto (P(p))_{p \in E}$ from $V$ to $F^E$ thus has non-trivial kernel, and the claim follows.
\end{proof}

\begin{example}  If we set $n=2$ and $d$ equal to $1$, $2$, or $3$, then Lemma \ref{interp} makes the following claims:
\begin{enumerate}
\item Any two points in $F^2$ lie on a line;
\item Any five points in $F^2$ lie on a (possibly degenerate) conic section; and
\item Any nine points in $F^2$ lie on a (possibly degenerate) cubic curve.
\end{enumerate}
\end{example}

Finally, we give a simple version (though certainly not the only version) of (c):

\begin{lemma}[Dichotomy]\label{dich}  Let $F$ be a field, let $n \geq 1$ be an integer, let $Z(P)$ be a (geometric) hypersurface of degree at most $d$, and let $\ell$ be a (geometric) line.  Then either $\ell$ is (geometrically) contained in $Z(P)$, or else $Z(P)[F] \cap \ell[F]$ has cardinality at most $d$.
\end{lemma}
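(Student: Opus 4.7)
The plan is to reduce this higher-dimensional dichotomy to the one-dimensional factor theorem (Lemma \ref{easy}(i)) by pulling $P$ back along a parameterization of the line. Writing $\ell = \ell_{x_0,v_0} = \{x_0 + t v_0 : t \in \overline{F}\}$ for some $x_0, v_0 \in \overline{F}^n$ with $v_0 \neq 0$, I would introduce the pullback
$$ Q(t) := P(x_0 + t v_0) \in \overline{F}[t]. $$
Expanding the affine substitution $x_i \mapsto x_{0,i} + t v_{0,i}$ inside each monomial of $P$ immediately gives the degree bound $\deg Q \leq \deg P \leq d$, since each monomial of total degree $k$ in $P$ contributes a polynomial in $t$ of degree at most $k$.

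Next I would split on whether $Q$ is the trivial polynomial. If $Q \equiv 0$, then $P$ vanishes identically on the image of the parameterization, which is precisely the first alternative $\ell \subset Z(P)$ geometrically. Otherwise, because $v_0 \neq 0$ the parameterization $t \mapsto x_0 + t v_0$ is injective on $\overline{F}$, so intersection points of $\ell$ with $Z(P)$ correspond bijectively to roots of $Q$ in $\overline{F}$. Applying Lemma \ref{easy}(i) over the field $\overline{F}$ then yields at most $d$ such roots, hence $|Z(P) \cap \ell| \leq d$ in the geometric sense; and since $Z(P)[F] \cap \ell[F] \subseteq Z(P) \cap \ell$, we conclude $|Z(P)[F] \cap \ell[F]| \leq d$ as required.

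There is no real obstacle here; the argument is just the direct higher-dimensional analogue of the factor theorem, obtained by restricting $P$ to a one-dimensional affine subspace. The only mildly delicate point worth flagging is that the pullback must be performed over $\overline{F}$ rather than $F$ (since $\ell$ is only assumed to be a geometric line, and in any case we need the first alternative $\ell \subset Z(P)$ to be interpreted geometrically), but Lemma \ref{easy}(i) applies verbatim over any field, so this causes no trouble.
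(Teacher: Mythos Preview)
Your proof is correct and is exactly the approach the paper takes: parameterize the line, pull back $P$ to a one-variable polynomial $t \mapsto P(x_0 + tv_0)$, and apply Lemma~\ref{easy}(i). The paper compresses this into a single sentence, but your expanded version (including the case split on whether the pullback is identically zero and the remark about working over $\overline{F}$) is precisely the argument intended.
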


One can view this dichotomy as a \emph{rigidity} statement: as soon as a line meets $d+1$ or more points of a degree $d$ hypersurface $Z(P)$, it must necessarily ``snap into place'' and become entirely contained (geometrically) inside that hypersurface.  These sorts of rigidity properties are a major source of power in the polynomial method.

\begin{proof}  Write $\ell = \{ x_0 + t v_0: t \in \overline{F}\}$, and then apply Lemma \ref{easy}(i) to the one-dimensional polynomial $t \mapsto P(x_0+tv_0)$.
\end{proof}

As a quick application of these three lemmas, we give

\begin{proposition}[Finite field Nikodym conjecture]  Let $F$ be a finite field, let $n,d \geq 1$ be integers and let $E \subset F^n$ have the property that through every point $x \in F^n$ there exists a line $\ell_{x,v_x}$ which contains more than $d$ points from $E$.  Then $|E| \geq \binom{d+n}{n}$.
\end{proposition}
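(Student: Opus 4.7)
The plan is to argue by contradiction using the three lemmas just established: interpolation to produce a low-degree hypersurface through $E$, the dichotomy (rigidity) lemma to extend vanishing from $E$ to all the given lines, and finally Schwartz--Zippel to derive a contradiction.

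First I would suppose for contradiction that $|E| < \binom{d+n}{n}$. By Lemma \ref{interp} (Interpolation), there exists a non-zero polynomial $P \in F[x_1,\ldots,x_n]$ of degree at most $d$ such that $E \subset Z(P)[F]$. The idea is now to upgrade ``$P$ vanishes on $E$'' to ``$P$ vanishes on all of $F^n$'', which will be incompatible with $P$ being non-trivial of low degree.

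Next, fix an arbitrary $x \in F^n$. By hypothesis there is a line $\ell = \ell_{x,v_x}$ containing more than $d$ points of $E$, all of which lie in $Z(P)[F]$. Hence $\ell[F] \cap Z(P)[F]$ has more than $d$ elements, so Lemma \ref{dich} (Dichotomy) forces $\ell$ to be geometrically contained in $Z(P)$. In particular $x \in \ell \subset Z(P)$, so $x \in Z(P)[F]$. Since $x$ was arbitrary, we conclude $Z(P)[F] = F^n$.

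Finally, I would invoke Lemma \ref{sz} (Schwartz--Zippel). We may assume $d < |F|$, since if $d \geq |F|$ then no line in $F^n$ can contain more than $d$ points (lines have at most $|F|$ points) and the hypothesis of the proposition is vacuous. With $d < |F|$ and $P$ non-trivial of degree at most $d$, Schwartz--Zippel gives $|Z(P)[F]| \leq d |F|^{n-1} < |F|^n$, contradicting $Z(P)[F] = F^n$. The main obstacle is essentially conceptual rather than computational: recognising that the rigidity in Lemma \ref{dich} is exactly what lets a single low-degree polynomial, constructed merely to vanish on $E$, be forced to vanish on every line appearing in the hypothesis, and hence everywhere.
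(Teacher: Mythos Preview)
Your proof is correct and follows essentially the same approach as the paper: assume $|E| < \binom{d+n}{n}$, interpolate a non-zero polynomial $P$ of degree at most $d$ through $E$, use the dichotomy lemma on each line $\ell_{x,v_x}$ to force $P$ to vanish at every $x \in F^n$, and contradict Schwartz--Zippel. The only cosmetic difference is that the paper disposes of the case $d \geq |F|$ at the outset rather than at the end.
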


\begin{proof}  Clearly we may take $d < |F|$, as the hypothesis cannot be satisfied otherwise.  Suppose for contradiction that $|E| < \binom{d+n}{n}$; then by Lemma \ref{interp} one can place $E$ inside an $F$-hypersurface $Z(P)[F]$ of degree at most $d$.  If $x \in F^n$, then by hypothesis there is a line $\ell_{x,v_x}$ which meets $E$, and hence $Z(P)[F]$, in more than $d$ points; by Lemma \ref{dich}, this implies that $\ell_{x,v_x}$ is geometrically contained in $Z(P)$.  In particular, $x$ lies in $Z(P)$ for every $x \in F^n$, so in particular $|Z(P)[F]| = |F|^n$.  But this contradicts Lemma \ref{sz}.
\end{proof}

A slight variant of this argument gives the following elegant proof by Dvir \cite{dvir} of the finite field Kakeya conjecture of Wolff \cite{wolff}.  If $F$ is a finite field and $n \geq 1$ is an integer, define a \emph{Kakeya set} in $F^n$ to be a set $E \subset F^n$ with the property that for every $v_0 \in F^n \setminus \{0\}$ there is a line $\ell_{x_0,v_0}$ in the direction $v_0$ such that $\ell_{x_0,v_0}[F] \subset E$.  The \emph{finite field Kakeya conjecture} asserts that for every $\eps>0$ and every dimension $n$, there is a constant $c_{n,\eps}>0$ such that all Kakeya sets in $F^n$ have cardinality at least $c_{n,\eps} |F|^{n-\eps}$.  This problem was proposed by Wolff \cite{wolff} as a simplified model of the Kakeya conjecture in $\R^n$, which remains open in three and higher dimensions despite much partial progress (see, e.g., \cite{katz-tao} for a survey).  Results from basic algebraic geometry had been brought to bear on the finite field Kakeya conjecture in \cite{tao-finite}, but with only partial success.  It was thus a great surprise when the conjecture was fully resolved by Dvir \cite{dvir}:

\begin{theorem}[Finite field Kakeya conjecture]\label{dvirthm}  Let $F$ be a finite field, let $n \geq 1$ be an integer, and let $E \subset F^n$ be a Kakeya set.
Then $|E| \geq \binom{|F|+n-1}{n}$.  In particular, we have $|E| \geq \frac{1}{n!} |F|^n$.
\end{theorem}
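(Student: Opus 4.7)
The plan is to imitate the Nikodym argument just given, but with the threshold degree chosen to be $d := |F|-1$. Suppose for contradiction that $|E| < \binom{|F|+n-1}{n} = \binom{d+n}{n}$; then Lemma \ref{interp} places $E$ inside an $F$-hypersurface $Z(P)[F]$ with $P$ non-trivial of degree $d' \leq d = |F|-1$. I would pick $P$ of minimal such degree; in particular $d' \geq 1$, since a non-zero constant cannot vanish on the non-empty set $E$.

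For any non-zero direction $v_0 \in F^n$, the Kakeya hypothesis produces a line $\ell_{x_0,v_0}$ whose $|F|$ distinct $F$-points all lie in $E \subset Z(P)[F]$. Since $|F| > d'$, Lemma \ref{dich} forces the full geometric line $\ell_{x_0,v_0}[\overline{F}]$ into $Z(P)$. This is where the Nikodym proof would like to stop, but here we only have a line in each direction inside $Z(P)$, not a point-wise covering of $F^n$, so Schwartz-Zippel cannot be applied to $P$ itself.

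The main new idea is to pass to the top-degree homogeneous part $P_{d'}$ of $P$. I would expand $P(x_0+tv_0)$ as a polynomial in $t \in \overline{F}$ and observe that the coefficient of $t^{d'}$ is precisely $P_{d'}(v_0)$. Since this polynomial in $t$ vanishes on the infinite set $\overline{F}$, it is the zero polynomial, so all coefficients vanish; in particular $P_{d'}(v_0) = 0$. As $v_0$ ranges over all of $F^n \setminus\{0\}$ (every direction is a Kakeya direction) and $P_{d'}(0)=0$ by homogeneity (using $d' \geq 1$), we conclude that $P_{d'}$ vanishes on all of $F^n$. But $P_{d'}$ is a non-zero polynomial of degree $d' \leq |F|-1$, so Lemma \ref{sz} gives $|Z(P_{d'})[F]| \leq d' |F|^{n-1} < |F|^n$, a contradiction.

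The main obstacle is precisely the gap flagged in the second paragraph: ``$E$ contains a line in every direction'' is strictly weaker than ``$E = F^n$'', so one cannot hit $P$ directly with Schwartz-Zippel as in the Nikodym proof. Passing to $P_{d'}$ converts each Kakeya line into a line through the origin inside $Z(P_{d'})$, which restores enough points to run Schwartz-Zippel. The final bound $|E| \geq \frac{1}{n!}|F|^n$ is then the routine estimate $\binom{|F|+n-1}{n} \geq |F|^n/n!$.
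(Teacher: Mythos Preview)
Your proof is correct and follows essentially the same route as the paper: interpolate a low-degree $P$ through $E$, trap a full geometric line in $Z(P)$ for every direction, and then pass to the top homogeneous part to get a contradiction with Schwartz--Zippel. The only cosmetic difference is that the paper phrases the passage to the top part via projective homogenisation (introducing $\overline{P}$ and restricting to $x_0=0$), whereas you extract $P_{d'}(v_0)$ directly as the $t^{d'}$-coefficient of $P(x_0+tv_0)$; these are the same computation.
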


Generalisations of this result have applications in theoretical computer science, and more specifically in randomness extraction; see \cite{dvir-survey}.  However, we will not discuss these applications further here.

\begin{proof}  Suppose for contradiction that $|E| < \binom{|F|+n-1}{n}$.  By Lemma \ref{interp}, we may place $E$ inside an $F$-hypersurface $Z(P)[F]$ of degree at most $|F|-1$.  If $v_0 \in F^n \setminus \{0\}$, then by hypothesis there is a line $\ell_{x_0,v_0}[F]$ which meets $E$, and hence $Z(P)[F]$, in $|F|$ points.  By Lemma \ref{dich}, this implies that $\ell_{x_0,v_0}$ is \emph{geometrically} contained in $Z(P)$.

To take advantage of this, we now work projectively, to isolate the direction $v_0$ of the line $\ell_{x_0,v_0}$ as a point (at infinity).  Let $d$ be the degree of $P$.  Thus $0 \leq d \leq |F|-1$, and
$$ P(x_1,\ldots,x_n) = \sum_{i_1,\ldots,i_n: i_1+\ldots+i_n \leq d} c_{i_1,\ldots,i_n} x_1^{i_1} \ldots x_n^{i_n}$$
for some coefficients $c_{i_1,\ldots,i_n} \in F$, with $c_{i_1,\ldots,i_n}$ non-zero for at least one tuple $(i_1,\ldots,i_n)$ with $i_1+\ldots+i_n = d$.  We then introduce the homogeneous polynomial $\overline{P} \in F[x_0,\ldots,x_d]$ defined by the formula
$$ \overline{P}(x_0,x_1,\ldots,x_n) := \sum_{i_1,\ldots,i_n: i_1+\ldots+i_n \leq d} c_{i_1,\ldots,i_n} x_0^{d-i_1-\ldots-i_n} x_1^{i_1} \ldots x_n^{i_n}.$$
This polynomial is homogeneous of order $d$, thus
$$ \overline{P}(\lambda x_0,\ldots,\lambda x_n) = \lambda^d \overline{P}(x_0,\ldots,x_n)$$
for any $\lambda \in \overline{F}$.  Since $\overline{P}(1,x_1,\ldots,x_n)= P(x_1,\ldots,x_n)$, we conclude that
$$ Z(\overline{P}) \supset \{ (\lambda,\lambda x): x \in Z(P); \lambda\in \overline{F} \}.$$
In particular, since the line $\ell_{x_0,v_0} = \{ x_0+tv_0: t \in \overline{F} \}$ is geometrically contained in $Z(P)$, we conclude that the set
$$ \{ (\lambda, \lambda(x_0+tv_0)): \lambda,t \in \overline{F} \}$$
is contained in $Z(\overline{P})$.  Geometrically, this set is the plane $\{ (\lambda, \lambda x_0 + s v_0): \lambda,s \in \overline{F} \}$ with the line $\{ (0, s v_0): s \in \overline{F} \}$ removed.  Applying Lemma \ref{dich} again\footnote{One could also take closures in the \emph{Zariski topology} of $\overline{F}^n$ here, defined as the topology whose closed sets are the (geometric) varieties.}, we conclude that this line is also contained in $Z(\overline{P})$.  Since $v_0$ was an arbitrary element of $F^n \setminus \{0\}$, we conclude that $Z(\overline{P})$ contains $0 \times F^n$.  In particular, if we let $P_0 \in F[x_1,\ldots,x_n]$ denote the polynomial
$$ P_0(x_1,\ldots,x_n) = \overline{P}(0,x_1,\ldots,x_n) = \sum_{i_1,\ldots,i_n: i_1+\ldots+i_n = d} c_{i_1,\ldots,i_n} x_1^{i_1} \ldots x_n^{i_n}$$
(thus $P_0$ is the top order component of $P$), then $Z(P_0)[F]$ is all of $F^n$.  But this contradicts Lemma \ref{sz}.
\end{proof}

There is no known proof of the finite field Kakeya conjecture that does not go through\footnote{To illustrate the radical change in perspective that the polynomial method brought to this subject, it had previously been observed in \cite[Proposition 8.1]{tao-mock} that a Kakeya set could not be contained in the zero set of a low degree polynomial, by essentially the same argument as the one given above.  However, this fact was deemed ``far from a proof that the Kakeya conjecture is true'', due to ignorance of the polynomial method.} the polynomial method.

Another classical application of polynomial interpolation with multiplicity was given by Segre \cite{segre}.  Call a subset $P$ of a affine plane $F^2$ or a projective plane $PF^2$ an \emph{arc} of no three points in $P$ are collinear.  It is easy to establish the bound $|P| \leq |F|+2$ for an arc, by considering the $|F|+1$ lines through a given point in $P$.  This argument also shows that if an arc $P$ has cardinality $|P| = |F|+2-t$, then every point in $P$ is incident to exactly $t$ \emph{tangent lines} to $P$, that is to say a line that meets exactly one point in $P$.  When $|F|$ is odd, we can rule out the $t=0$ case (since there would then be no tangent lines, and the lines through any given point not in $F$ then are incident to an even number of points in $P$, contradicting the fact that $|P| = |F|+2$ is odd).

The following result of Segre also classifies the $t=1$ case, at least in the odd prime case:

\begin{theorem}[Segre's theorem]\label{segthm}  Let $F$ be a finite field of odd prime order, and let $P$ be an arc in $PF^2$ of cardinality $|F|+1$.  Then $P$ is a conic curve, that is to say the projective zero set of a non-zero polynoimal $Q \in F[x,y]$ of degree at most two.
\end{theorem}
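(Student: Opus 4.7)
The plan is to follow Segre's original argument. Its engine is a Menelaus-type rigidity statement about the tangents to the oval—the ``lemma of tangents''—which in odd characteristic is forced by Wilson's theorem.

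\textbf{Setup.} Since $|P|=|F|+1$, the discussion preceding the theorem gives $t=1$: every $A\in P$ lies on exactly one tangent line $t_A$, and the remaining $|F|$ lines through $A$ are secants meeting $P$ in exactly one further point. Consequently, for any line $\ell$ not through $A$, the map $B\mapsto\overline{AB}\cap\ell$ is a bijection $P\setminus\{A\}\to\ell\setminus\{t_A\cap\ell\}$.

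\textbf{Stage 1 (lemma of tangents).} For any three points $A_1,A_2,A_3\in P$ with tangents $t_1,t_2,t_3$ and $T_i:=t_i\cap\overline{A_jA_k}$ where $\{i,j,k\}=\{1,2,3\}$, I claim $T_1,T_2,T_3$ are collinear; equivalently, the signed affine ratios $\lambda_i:=\overline{A_jT_i}/\overline{T_iA_k}$ satisfy Menelaus's condition $\lambda_1\lambda_2\lambda_3=-1$. To prove this, for each $X\in P\setminus\{A_1,A_2,A_3\}$ the three cevians $\overline{A_iX}$ concur at $X$, so Ceva's theorem gives a product of signed intercept-ratios on the three sides equal to $+1$. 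Multiply this identity over all such $X$ (there are $|F|-2$ of them). The total left-hand side factors across the three sides of the triangle; by the bijection from the setup, the contribution on $\overline{A_jA_k}$ runs in affine coordinates over the whole punctured line $F^\times\setminus\{\lambda_i\}$, and by Wilson's theorem $\prod_{a\in F^\times}a=-1$ (which uses the odd characteristic) it evaluates to $-1/\lambda_i$. Assembling, $1=(-1/\lambda_1)(-1/\lambda_2)(-1/\lambda_3)=-1/(\lambda_1\lambda_2\lambda_3)$, as claimed.

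\textbf{Stage 2 (reconstruction of the conic).} Let $Q$ be the unique conic through $A_1,A_2,A_3$ tangent to $t_1$ at $A_1$ and $t_2$ at $A_2$—these are five linear conditions on the six-dimensional space of homogeneous quadratic forms in three variables. A non-degenerate conic is itself an oval of $|F|+1$ points, so Stage 1 applies to $Q$: the intercept on $\overline{A_1A_2}$ of the tangent to $Q$ at $A_3$ satisfies the same Menelaus relation as $T_3=t_3\cap\overline{A_1A_2}$, and hence coincides with $T_3$. Both that tangent and $t_3$ are lines through $A_3$ meeting $\overline{A_1A_2}$ at $T_3$, so they agree. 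Thus $Q$ is tangent to $t_i$ at $A_i$ for all three $i$. For any further $A\in P$, the analogous construction on the triangle $A_1,A_2,A$ yields a conic $Q_A$ tangent to $t_1,t_2,t_A$ at $A_1,A_2,A$. Both $Q$ and $Q_A$ lie in the pencil of conics tangent to $t_1$ at $A_1$ and $t_2$ at $A_2$; if $Q\neq Q_A$ then Bezout (with multiplicity two at each of the two common tangencies) forces $Q\cap Q_A=\{A_1,A_2\}$, so $A_3\notin Q_A$. Repeating the construction with $(A_1,A_3)$ and with $(A_2,A_3)$ in place of $(A_1,A_2)$ produces further conics whose pairwise Bezout intersections, together with the common tangent $t_A$ at $A$, are incompatible unless all of them coincide with $Q$; hence $A\in Q$. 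Varying $A$ and using $|P|=|Q|=|F|+1$ gives $P=Q$, a conic curve.

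\textbf{Main obstacle.} The hard step is Stage 1: the minus sign on the right of Menelaus comes directly from Wilson's theorem and so relies essentially on the oddness of $|F|$. In characteristic two the sign would flip to $+1$, the lemma fails, and translation hyperovals give explicit non-conic ovals. The bookkeeping in the triple product over $X\in P\setminus\{A_1,A_2,A_3\}$—in particular, parameterizing each side of the triangle so that $A_j,A_k$ go to $0,\infty$ and verifying that the ``missing'' intercept is exactly $\lambda_i$—must be done carefully. Stage 2 is then a comparatively routine pencil-and-Bezout matching of tangent data.
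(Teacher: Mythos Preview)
Your proposal is correct and follows essentially the same route as the paper's sketch: Ceva's theorem multiplied over all points of $P$ plus Wilson's theorem yields Segre's lemma of tangents, one then interpolates a conic through three base points tangent to two of the tangent lines and uses the lemma to force the third tangency, and finally applies the construction to the triples involving a fourth point $D$ to place $D$ on the same conic. Your device of applying the lemma of tangents to the conic $Q$ itself (as an oval) is a clean way to execute what the paper calls ``some algebra (or classical geometry)'' for the third tangency; your final Bezout/pencil ``incompatibility'' step is at the same sketch level as the paper's own ``using some algebra'' (both defer implicitly to a short computation showing that the three conics $Q_A^{12},Q_A^{13},Q_A^{23}$ must coincide with $Q$).
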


See \cite{ball} for some of the recent developments associated to Segre's theorem, including progress on a higher dimensional analogue of Segre's theorem known as the \emph{MDS conjecture}.

We now  briefly sketch a proof of Segre's theorem; details may be found in \cite{hirsch}.  Let $F,P$ be as in the theorem.  As discussed earlier, every point $A$ on $P$ is incident to exactly one tangent line $\ell_P$.  The main step is to show that for any distinct points $A,B,C$ in $P$, there is a conic curve $\gamma_{A,B,C}$ that passes through $A,B,C$ and is tangent to $\ell_A,\ell_B,\ell_C$ at $A,B,C$ respectively.  Once one has this claim, by applying the claim to the triples $A,B,D$, $A,C,D$, $B,C,D$ for any fourth point $D$ of $P$ and using some algebra, one can place $D$ in a conic curve that depends only on $A,B,C,\ell_A,\ell_B,\ell_C$; see \cite{hirsch}.

It remains to prove the claim.  For any line $\ell$ passing through $A$, let $c_A(\ell) \in PF^1$ be the projective coordinate of the intersection of $\ell$ with the line $\overleftrightarrow{BC}$ with the property that $c_A(\overleftrightarrow{AB})=0$ and $c_A(\overleftrightarrow{AC})=\infty$.  Define $c_B$ and $c_C$ for lines through $B, C$ similarly.  Ceva's theorem then asserts that 
$$ c_A( \overleftrightarrow{AD} ) c_B( \overleftrightarrow{BD} ) c_C( \overleftrightarrow{CD} ) = 1$$
for any point $D$ on $P$ other than $A,B,C$.  Multiplying this identity for all $D$ in $P$, and then taking complements using Wilson's theorem, we conclude the key identity
\begin{equation}\label{tangent}
 c_A( \ell_A ) c_B( \ell_B ) c_C( \ell_C ) = -1
\end{equation}
(known as \emph{Segre's lemma of tangents}).  On the other hand, from a version of Lemma \ref{interp} one can find a conic curve through $A,B,C$ which is tangent to $\ell_A$ and $\ell_B$, and from some algebra (or classical geometry) one can use \eqref{tangent} to conclude that this curve is also tangent to $\ell_C$, giving the claim.

\section{Multiplicity}

One can boost the power of the polynomial method by considering not just the zero set $Z(P)$ of a polynomial $P$, but also the order of vanishing of $P$ at each point on this set.  For one-dimensional polynomials $P \in F[x]$, the order of vanishing is easy to define; we say that $P$ vanishes to order at least $m$ at a point $p \in F$ if the polynomial $P$ is divisible by $(x-p)^m$.  An equivalent way of phrasing this is in terms of the Taylor expansion
\begin{equation}\label{pjj}
 P(x) = \sum_i D^i P(p) (x-p)^i
\end{equation}
of $P$, where the $i^{\operatorname{th}}$ \emph{Hasse derivative}\footnote{In the real case $F=\R$, the Hasse derivative $D^i P$ is related to the real derivative $P^{(i)}$ by the formula $D^i P = \frac{1}{i!} P^{(i)}$, giving rise to the familiar Taylor formula over the reals.  However, over fields of finite characteristic, such as finite fields, it is more convenient to use the Hasse derivative than the classical derivative, as dividing by $i!$ can become problematic if $i$ is larger than or equal to the characteristic of the field.} $D^i P \in F[x]$ of a polynomial $P = \sum_j c_j x^j$ is defined by the formula
$$ D^i (\sum_j c_j x^j) := \sum_j \binom{j}{i} c_j x^{j-i}$$
(noting that $\binom{j}{i}$ vanishes when $j < i$); note that the identity \eqref{pjj} is an easy consequence of the binomial identity.  Then we see that $P$ vanishes to order at least $m$ at $p$ if and only if the first $m$ Hasse derivatives $D^0 P, D^1 P, \ldots, D^{m-1} P$ all vanish at $p$.

We can extend this latter definition to higher dimensions.  Observe if $P \in F[x_1,\ldots,x_n]$ is a polynomial and $p = (p_1,\ldots,p_n)$ is a point in $F^n$, one has the multidimensional Taylor expansion
$$ P(x) = \sum_{i_1,\ldots,i_n} D^{i_1,\ldots,i_n} P(p) (x_1-p_1)^{i_1} \ldots (x_n-p_n)^{i_n}$$
where the multidimensional Hasse derivatives $D^{i_1,\ldots,i_n} P \in F[x_1,\ldots,x_n]$ are defined by
$$ D^{i_1,\ldots,i_n} (\sum_{j_1,\ldots,j_n} c_{j_1,\ldots,j_n} x_1^{j_1} \ldots x_n^{j_n})
:= \sum_{j_1,\ldots,j_n} c_{j_1,\ldots,j_n} \binom{j_1}{i_1} \ldots \binom{j_n}{i_n} x_1^{j_1-i_1} \ldots x_n^{j_n-i_n}.$$
We then say that $P$ vanishes to order at least $m$ at $p$ if the Hasse derivatives $D^{i_1,\ldots,i_n} P(p)$ vanish whenever $i_1+\ldots+i_n < m$.  The largest $m$ for which this occurs is called the \emph{multiplicity} or \emph{order} of $P$ at $p$ and will be denoted $\operatorname{ord}_p(P)$.  Thus for instance $\operatorname{ord}_p(P) > 0$ if and only if $p \in Z(P)[F]$.  By convention we have $\operatorname{ord}_p(P)=+\infty$ when $P$ is the zero polynomial.  By considering the product of two Taylor series (and ordering all monomials of a given degree in, say, lexicographical order) we obtain the useful multiplicativity property
\begin{equation}\label{ordmult}
\operatorname{ord}_p(PQ) = \operatorname{ord}_p(P) + \operatorname{ord}_p(Q)
\end{equation}
for any polynomials $P,Q \in F[x_1,\ldots,x_n]$ and any $p \in F^n$.

We can strengthen Lemma \ref{easy} to account for multiplicity:

\begin{lemma}[Hypersurfaces with multiplicity in one dimension]\label{easy-2}  Let $d \geq 0$.
\begin{itemize}
\item[(i)]  (Factor theorem) If $P \in F[x]$ is a non-trivial polynomial of degree at most $d$, then $\sum_{p \in F} \operatorname{ord}_p(P) \leq d$.
\item[(ii)]  (Interpolation theorem)  Conversely, if $(c_p)_{p \in F}$ is a collection of natural numbers with $\sum_{p \in F} c_p \leq d$, then there is a non-trivial polynomial $P \in F[x]$ with $\operatorname{ord}_p(P) \geq c_p$ for all $p \in F$.
\end{itemize}
\end{lemma}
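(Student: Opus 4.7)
The plan is to mimic the two proofs of Lemma \ref{easy} but keep track of the order of vanishing using the multiplicativity property \eqref{ordmult}.

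For part (i), the key step is to upgrade the factor theorem to say that if $\operatorname{ord}_p(P) = m$ then $(x-p)^m$ divides $P$ in $F[x]$. I would prove this by induction on $m$: the case $m=0$ is vacuous, and for $m \geq 1$ we have $P(p) = 0$ so the ordinary factor theorem gives $P(x) = (x-p) R(x)$. Applying \eqref{ordmult} to this factorisation yields $m = \operatorname{ord}_p(P) = \operatorname{ord}_p(x-p) + \operatorname{ord}_p(R) = 1 + \operatorname{ord}_p(R)$, so $\operatorname{ord}_p(R) = m-1$, and the induction hypothesis gives $(x-p)^{m-1} \mid R$, hence $(x-p)^m \mid P$.

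Once this is established, (i) follows quickly. Since $(x-p)$ and $(x-p')$ are coprime in $F[x]$ for distinct $p, p' \in F$, the individual divisibilities $(x-p)^{\operatorname{ord}_p(P)} \mid P$ can be combined (using that $F[x]$ is a unique factorisation domain) to obtain
\[ \prod_{p \in F} (x-p)^{\operatorname{ord}_p(P)} \;\Big|\; P. \]
The product on the left is a finite product (since only finitely many factors are nontrivial, as $\operatorname{ord}_p(P) \leq \deg P \leq d$ for each $p$) of degree $\sum_{p \in F} \operatorname{ord}_p(P)$, so comparing degrees against $\deg P \leq d$ gives the claim.

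For part (ii), I would simply take the explicit polynomial
\[ P(x) := \prod_{p \in F} (x - p)^{c_p}, \]
which is again a finite product of total degree $\sum_{p \in F} c_p \leq d$, hence nontrivial of degree at most $d$. Using \eqref{ordmult} and the obvious facts $\operatorname{ord}_p((x-p)^{c_p}) = c_p$ and $\operatorname{ord}_p((x-p')^{c_{p'}}) = 0$ for $p' \neq p$ (since $(x-p')^{c_{p'}}$ does not vanish at $p$), we get $\operatorname{ord}_p(P) = c_p$ for every $p \in F$. Alternatively, one could reproduce the linear algebra argument from Lemma \ref{easy}(ii): the $(d+1)$-dimensional space of polynomials of degree at most $d$ maps to the $\sum_p c_p \leq d$ dimensional space of jets $(D^i P(p))_{p \in F, i < c_p}$, so this map has nontrivial kernel. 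The main (modest) obstacle is really only in (i), namely verifying that pointwise vanishing of all Hasse derivatives of order below $m$ upgrades to divisibility by $(x-p)^m$ in the polynomial ring, and for this \eqref{ordmult} does all the work.
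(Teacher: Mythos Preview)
Your proposal is correct and follows essentially the same approach as the paper: for (i) you spell out in detail what the paper summarises as ``repeating the argument used to prove Lemma~\ref{easy}(i), but allowing for repeated factors of $(x-p)$ for each $p$,'' and for (ii) you give exactly the two alternatives the paper offers (the explicit product $\prod_p (x-p)^{c_p}$ and the linear-algebra/jet-map argument). The only addition is your explicit invocation of \eqref{ordmult} to justify the inductive step, which is a welcome clarification rather than a different route.
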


\begin{proof}  The claim (i) follows by repeating the argument used to prove Lemma \ref{easy}(i), but allowing for repeated factors of $(x-p)$ for each $p$.  Similarly, the claim (ii) follows either from the explicit formula $P(x) := \prod_p (x-p)^{c_p}$, or else by considering the linear map from the $d+1$-dimensional space of polynomials of degree at most $d$ to the space $\prod_p F^{c_p}$ formed by sending each polynomial $P$ to the tuple $( D^i P(p) )_{p \in F; 0 \leq i < c_p}$.
\end{proof}

We can similarly strengthen Lemma \ref{sz} and Lemma \ref{interp}:

\begin{lemma}[Schwartz-Zippel lemma with multiplicity]\label{sz-mult}\cite{dkss} Let $F$ be a finite field, let $n \geq 1$ and $d \geq 0$, and let $P \in F[x_1,\ldots,x_n]$ be a polynomial of degree at most $d$.  If $P$ does not vanish entirely, then
$$ \sum_{p \in F^n} \operatorname{ord}_P(p) \leq d |F|^{n-1}.$$
\end{lemma}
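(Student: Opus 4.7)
The plan is to adapt the inductive proof of Lemma \ref{sz} to track multiplicities, using the multiplicativity property \eqref{ordmult} as the key new ingredient. I induct on the dimension $n$; the base case $n=1$ is exactly Lemma \ref{easy-2}(i).

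For the inductive step, fix a nonzero $P \in F[x_1,\ldots,x_n]$ of degree at most $d$. For each $t \in F$, let $k_t \geq 0$ be the largest integer such that $(x_n-t)^{k_t}$ divides $P$ in $F[x_1,\ldots,x_n]$. These factors are coprime, so we may write
$$ P(x_1,\ldots,x_n) = \left(\prod_{t \in F} (x_n-t)^{k_t}\right) R(x_1,\ldots,x_n)$$
for some $R \in F[x_1,\ldots,x_n]$ of degree at most $d' := d - \sum_{t \in F} k_t$. By maximality of $k_t$, each slice $R_t(x_1,\ldots,x_{n-1}) := R(x_1,\ldots,x_{n-1},t)$ is nonzero (otherwise $(x_n-t)$ would divide $R$ and hence $(x_n-t)^{k_t+1}$ would divide $P$, contradicting the choice of $k_t$).

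Applying \eqref{ordmult} at an arbitrary point $(a,b) \in F^{n-1} \times F$, and noting that $\operatorname{ord}_{(a,b)}(x_n-t)$ equals $1$ when $b=t$ and $0$ otherwise, I obtain
$$ \operatorname{ord}_{(a,b)}(P) = k_b + \operatorname{ord}_{(a,b)}(R).$$
Summing the first term over all $(a,b) \in F^n$ contributes exactly $|F|^{n-1} \sum_{b \in F} k_b$. For the second term, I use the trivial observation that Hasse derivatives in $x_1,\ldots,x_{n-1}$ commute with the substitution $x_n \mapsto b$; comparing the multivariate Taylor expansion of $R$ around $(a,b)$ with the Taylor expansion of $R_b$ around $a$ yields the slice inequality
$$ \operatorname{ord}_a(R_b) \geq \operatorname{ord}_{(a,b)}(R).$$
Since $R_b$ is a nonzero polynomial in $n-1$ variables of degree at most $d'$, the inductive hypothesis gives $\sum_{a \in F^{n-1}} \operatorname{ord}_a(R_b) \leq d' |F|^{n-2}$, and summing over $b \in F$ yields $\sum_{(a,b) \in F^n} \operatorname{ord}_{(a,b)}(R) \leq d'|F|^{n-1}$.

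Putting the two contributions together produces the desired bound
$$ \sum_{p \in F^n} \operatorname{ord}_p(P) \leq \Big(\sum_{b \in F} k_b\Big) |F|^{n-1} + d'|F|^{n-1} = d|F|^{n-1}.$$
No single step is genuinely hard, but the subtle point — and the reason the naive one-variable slicing argument of Lemma \ref{sz} does not immediately generalise — is that multiplicities on a slice $P_t$ can collapse information about multiplicities of $P$ transverse to the slice. Extracting the $(x_n-t)^{k_t}$ factors first and working with the ``reduced'' polynomial $R$ (whose slices are guaranteed nonzero) converts the transverse vanishing into a combinatorial term $k_b$ that sums cleanly, while \eqref{ordmult} keeps the accounting exact.
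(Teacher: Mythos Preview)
Your proof is correct and follows essentially the same approach as the paper's: both induct on $n$, factor out the maximal powers of $(x_n-t)$ to write $P$ as a product of a univariate factor and a polynomial whose slices are all nonzero, apply \eqref{ordmult} to split the order, use the slice inequality $\operatorname{ord}_{(a,b)}(R) \leq \operatorname{ord}_a(R_b)$, and invoke the induction hypothesis on each slice. The only differences are notational (your $k_t,R$ are the paper's $a_t,Q$) and your added explanatory paragraph at the end.
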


\begin{proof}  We repeat the proof of Lemma \ref{sz}, and induct on $n$.   The case $n=1$ follows from Lemma \ref{easy-2}(i), so suppose inductively that $n > 1$ and that the claim has already been proven for $n-1$.

By repeatedly factoring out any factors of $x_n-t$ which appear in $P$, we arrive at the factorisation
$$ P(x_1,\ldots,x_n) = \left(\prod_{t \in F} (x_n-t)^{a_t}\right) Q(x_1,\ldots,x_n)$$
with some natural numbers $(a_t)_{t \in F}$ with $\sum_{t \in F} a_t \leq d$, and a non-zero polynomial $Q$ of degree at most $d - \sum_{t \in F} a_t$ with the property that the slices $Q_t$ (as defined in the proof of Lemma \ref{sz}) are non-zero for each $t \in F$.  From \eqref{ordmult} we have
$$ \operatorname{ord}_{p_1,\ldots,p_n}(P) = a_{p_n} + \operatorname{ord}_{p_1,\ldots,p_n}(Q)$$
and so
$$ \sum_{p\in F^n} \operatorname{ord}_p(P) = |F|^{n-1} \sum_{t \in F} a_t + \sum_{p \in F^n} \operatorname{ord}_p(Q).$$
However, by a comparison of Taylor series we see that
$$ \operatorname{ord}_{p_1,\ldots,p_n}(Q) \leq \operatorname{ord}_{p_1,\ldots,p_{n-1}}(Q_{p_n})$$
and from the induction hypothesis we have
$$ \sum_{(p_1,\ldots,p_{n-1}) \in F^{n-1}} \operatorname{ord}_{p_1,\ldots,p_{n-1}}(Q_{p_n}) \leq \left(d - \sum_{t \in F} a_t\right) |F|^{n-2}$$
so on summing in $p_n$ we conclude that
$$ \sum_{p \in F^n} \operatorname{ord}_p(Q) \leq \left(d - \sum_{t \in F} a_t\right) |F|^{n-1}$$
and hence
$$ \sum_{p \in F^n} \operatorname{ord}_p(P) \leq d |F|^n$$
as required.
\end{proof}

\begin{lemma}[Interpolation with multiplicity]\label{interp-2}  Let $F$ be a field, let $n \geq 1$ be an integer, and $d \geq 0$.  If $(c_p)_{p \in F^n}$ is a collection of natural numbers such that $\sum_{p \in F^n} \binom{c_p+n-1}{n} < \binom{d+n}{n}$, then there exists a non-zero polynomial $P \in F[x_1,\ldots,x_n]$ of degree at most $d$ such that $\operatorname{ord}_p(P) \geq c_p$ for all $p \in F^n$.
\end{lemma}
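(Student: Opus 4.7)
The plan is to repeat the linear algebra argument used in the second proofs of Lemma \ref{easy}(ii), Lemma \ref{easy-2}(ii), and Lemma \ref{interp}, now with multi-indexed Hasse derivatives imposing several linear conditions at each point. By definition, the condition $\operatorname{ord}_p(P) \geq c_p$ is equivalent to the system of $F$-linear equations
$$ D^{i_1,\ldots,i_n} P(p) = 0 \quad \text{for all } (i_1,\ldots,i_n) \in \N^n \text{ with } i_1+\ldots+i_n < c_p. $$
Each such equation is $F$-linear in the coefficients of $P$, so imposing multiplicity $c_p$ at $p$ cuts out a linear subspace of codimension at most the number of such multi-indices.

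The key combinatorial step is to count these multi-indices. The number of $(i_1,\ldots,i_n) \in \N^n$ with $i_1+\ldots+i_n = k$ is $\binom{k+n-1}{n-1}$, so the number with $i_1+\ldots+i_n < c_p$ is
$$ \sum_{k=0}^{c_p-1} \binom{k+n-1}{n-1} = \binom{c_p+n-1}{n} $$
by the hockey stick identity. (Note that this count is $0$ when $c_p=0$, so only the finitely many $p$ with $c_p \geq 1$ actually contribute to the hypothesis.)

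Now let $V \subset F[x_1,\ldots,x_n]$ be the space of polynomials of degree at most $d$, which has dimension $\binom{d+n}{n}$, and consider the $F$-linear evaluation map
$$ T : V \to \bigoplus_{p \in F^n} F^{\binom{c_p+n-1}{n}}, \qquad P \mapsto \bigl( D^{i_1,\ldots,i_n} P(p) \bigr)_{p \in F^n,\; i_1+\ldots+i_n < c_p}. $$
The target has dimension $\sum_{p \in F^n} \binom{c_p+n-1}{n}$, which by hypothesis is strictly smaller than $\dim V$, so $T$ has non-trivial kernel. Any non-zero $P \in \ker T$ satisfies $\operatorname{ord}_p(P) \geq c_p$ for every $p \in F^n$, as required.

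There is no real obstacle here: the proof is essentially a bookkeeping exercise once one identifies the vanishing conditions of order $c_p$ with a list of $\binom{c_p+n-1}{n}$ Hasse derivative equations. The only step that needs any care is the combinatorial identity counting monomials of degree less than $c_p$ in $n$ variables, and the observation that the hypothesis automatically forces the support $\{p : c_p \geq 1\}$ to be finite, so that the direct sum defining the target of $T$ is finite-dimensional.
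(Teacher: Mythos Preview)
Your proof is correct and is essentially identical to the paper's own argument: both define the linear map $P \mapsto (D^{i_1,\ldots,i_n} P(p))_{p,\; i_1+\ldots+i_n < c_p}$ from the $\binom{d+n}{n}$-dimensional space of degree-$\leq d$ polynomials into a space of dimension $\sum_p \binom{c_p+n-1}{n}$ and invoke rank--nullity. You have in fact supplied slightly more detail than the paper does, namely the hockey-stick count of the multi-indices and the remark that the hypothesis forces only finitely many $c_p$ to be positive.
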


\begin{proof}  As in the proof of Lemma \ref{interp}, we let $V$ be the $\binom{d+n}{n}$-dimensional vector space of polynomials $P \in F[x_1,\ldots,x_n]$ of degree at most $d$.  We then consider the linear map 
$$P \mapsto (D^{i_1,\ldots,i_n} P(p))_{p \in F^n; i_1+\ldots+i_n < c_p}$$
from $V$ to a $\sum_{p \in F^n} \binom{c_p+n-1}{n}$-dimensional vector space.  By hypothesis, the range has smaller dimension than the domain, so the kernel is non-trivial, and the claim follows.
\end{proof}

Setting $c_p = m$ for all $p$ in $E$, we conclude in particular that given any subset $E \subset F^n$ we can find a hypersurface of degree at most $d$ that vanishes to order at least $m$ at every point of $E$ as soon as
$$ \binom{m+n-1}{n} |E| < \binom{d+n}{n};$$
bounding $\binom{m+n-1}{n} < (m+n)^n / n!$ and $\binom{d+n}{n} \geq d^n / n!$, we conclude in particular that we may ensure that 
$$ d := (m+n) |E|^{1/n}.$$

Finally, from Lemma \ref{easy-2}(i) we have a multiplicity version of the dichotomy:

\begin{lemma}[Dichotomy]\label{dich-2}  Let $F$ be a field, let $n \geq 1$ be an integer, let $Z(P)$ be a (geometric) hypersurface of degree at most $d$, and let $\ell = \ell_{x_0,v_0}$ be a (geometric) line.  Then at least one of the following holds:
\begin{itemize}
\item[(i)] $\ell$ is (geometrically) contained in $Z(P)$; or 
\item[(ii)] $\sum_{p \in F} \operatorname{ord}_p P( x_0 + \cdot v_0 ) \leq d$, where $P( x_0 + \cdot v_0 )$ denotes the polynomial $t \mapsto P(x_0 + t v_0)$.
\end{itemize}
\end{lemma}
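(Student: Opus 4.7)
The plan is to reduce the multivariate statement to Lemma \ref{easy-2}(i) by restricting $P$ to the line $\ell$, in complete analogy with the proof of Lemma \ref{dich}. Concretely, I would introduce the univariate polynomial
$$ Q(t) := P(x_0 + t v_0) \in \overline{F}[t],$$
and first verify that $\deg Q \leq d$. This is immediate from the multidimensional Taylor expansion (or just from expanding each monomial $x_1^{i_1}\cdots x_n^{i_n}$ of total degree $\leq d$ after the affine substitution $x_j = (x_0)_j + t (v_0)_j$): every such monomial becomes a polynomial in $t$ of degree at most $i_1+\cdots+i_n \leq d$, so $Q$ has degree at most $d$.

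Next I would dichotomise on whether $Q$ is the zero polynomial. If $Q \equiv 0$, then $P(x_0 + t v_0) = 0$ for every $t \in \overline{F}$, which is precisely the assertion that $\ell = \{x_0 + tv_0 : t \in \overline{F}\}$ is geometrically contained in $Z(P)$, giving conclusion (i). If instead $Q$ is non-trivial, then it is a non-zero element of $\overline{F}[t]$ of degree at most $d$; applying Lemma \ref{easy-2}(i) over the field $\overline{F}$ (or, since the coefficients of $Q$ obtained from an $F$-polynomial $P$ and an $F$-line lie in $F$, over $F$ itself) yields
$$ \sum_{p \in F} \operatorname{ord}_p(Q) \;\leq\; \sum_{p \in \overline{F}} \operatorname{ord}_p(Q) \;\leq\; d,$$
which is exactly conclusion (ii), since by definition $Q = P(x_0 + \cdot\, v_0)$.

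There is no real obstacle: the only points to check are the degree bound $\deg Q \leq d$ and the matching of the univariate Hasse-derivative definition of $\operatorname{ord}_p$ with the divisibility-by-$(t-p)^m$ criterion used in Lemma \ref{easy-2}(i), both of which are routine. The proof is essentially identical to that of Lemma \ref{dich}, but with the multiplicity-aware factor theorem (Lemma \ref{easy-2}(i)) substituted for its set-cardinality counterpart (Lemma \ref{easy}(i)).
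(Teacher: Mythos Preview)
Your proposal is correct and matches the paper's approach exactly: the paper simply states that Lemma~\ref{dich-2} follows from Lemma~\ref{easy-2}(i), and you have spelled out precisely this reduction by restricting $P$ to the line to obtain a univariate polynomial $Q$ of degree at most $d$, then invoking the multiplicity factor theorem. There is nothing to add.
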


Using multiplicity, we can now obtain a better bound on the Kakeya problem:

\begin{theorem}[Improved bound on Kakeya sets]\label{dvir-improv}\cite{dkss}  Let $F$ be a finite field, let $n \geq 1$ be an integer, and let $E \subset F^n$ be a Kakeya set.
Then $|E| \geq 2^{-n} |F|^n$.
\end{theorem}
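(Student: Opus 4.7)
The plan is to refine Dvir's argument (Theorem \ref{dvirthm}) by using a polynomial that vanishes to a high order $m$ on $E$, iterating the argument on every Hasse derivative $D^\alpha P$, and finally applying the Schwartz--Zippel lemma with multiplicity (Lemma \ref{sz-mult}) to the top-degree homogeneous component $P^{\mathrm{top}}$ of $P$. The gain over Dvir's bound comes from forcing $P^{\mathrm{top}}$ to vanish to high order on all of $F^n \setminus \{0\}$, not merely to order $1$.

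Fix a large integer $m$ (to be sent to infinity at the end). By Lemma \ref{interp-2} and the crude estimates following it, there is a non-zero $P \in F[x_1,\ldots,x_n]$ of degree $d \leq (m+n)|E|^{1/n}+1$ that vanishes to order at least $m$ on $E$. For every multi-index $\alpha$ with $|\alpha| < m$, the Hasse derivative $D^\alpha P$ has degree at most $d - |\alpha|$ and still vanishes to order at least $m - |\alpha|$ on $E$ (via the chain rule $D^\gamma D^\alpha = \binom{\alpha+\gamma}{\alpha} D^{\alpha+\gamma}$). For each direction $v_0 \in F^n \setminus \{0\}$, choose $x_0$ with $\ell_{x_0,v_0}[F] \subset E$ and set $Q_\alpha(t) := (D^\alpha P)(x_0 + tv_0) \in F[t]$; a Taylor comparison (as in the proof of Lemma \ref{sz-mult}) gives $\operatorname{ord}_t(Q_\alpha) \geq m - |\alpha|$ at each $t \in F$. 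Lemma \ref{dich-2} then forces $Q_\alpha \equiv 0$ whenever $(m-|\alpha|)|F| > d - |\alpha|$, equivalently whenever $|\alpha| < \mu := (m|F|-d)/(|F|-1)$.

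The coefficient of $t^{\deg P - |\alpha|}$ in the expansion of $Q_\alpha(t)$ is exactly $(D^\alpha P^{\mathrm{top}})(v_0)$: the operator $D^\alpha$ sends each homogeneous component of $P$ of degree $k$ to one of degree $k - |\alpha|$, and only the top homogeneous component $P^{\mathrm{top}}$ contributes to this highest power of $t$ (since for a homogeneous $H$ of degree $e$, the coefficient of $t^e$ in $H(x_0 + tv_0)$ is $H(v_0)$). Thus $Q_\alpha \equiv 0$ gives $(D^\alpha P^{\mathrm{top}})(v_0) = 0$, and we conclude that $\operatorname{ord}_{v_0}(P^{\mathrm{top}}) \geq \mu$ (as a real-valued lower bound on the integer-valued order) for every $v_0 \in F^n \setminus \{0\}$.

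Now apply Lemma \ref{sz-mult} to the non-zero polynomial $P^{\mathrm{top}}$ of degree $\leq d$:
\[
\mu\,(|F|^n - 1) \leq \sum_{p \in F^n} \operatorname{ord}_p(P^{\mathrm{top}}) \leq d\,|F|^{n-1}.
\]
Using the elementary bound $|F|^n - 1 \geq (|F|-1)|F|^{n-1}$, this simplifies to $\mu \leq d/(|F|-1)$, i.e., $m|F| \leq 2d \leq 2(m+n)|E|^{1/n}+2$. Dividing by $2(m+n)$ and letting $m \to \infty$ yields $|E|^{1/n} \geq |F|/2$, hence $|E| \geq 2^{-n}|F|^n$. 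The one subtle step is identifying $(D^\alpha P^{\mathrm{top}})(v_0)$ as the coefficient of the top power of $t$ in $Q_\alpha(t)$; beyond that, the proof is a careful multiplicity-tracking version of Dvir's original argument, built from the multiplicity versions of interpolation, dichotomy, and Schwartz--Zippel already established in this section.
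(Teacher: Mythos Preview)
Your proof is correct and follows essentially the same approach as the paper's: both use interpolation with multiplicity to get $P$ vanishing to high order on $E$, apply the dichotomy to the Hasse derivatives $D^\alpha P$ restricted to the Kakeya lines, pass to the top-degree homogeneous part, and finish with the Schwartz--Zippel lemma with multiplicity. Your bookkeeping is slightly cleaner---you work directly with the exact dichotomy threshold $\mu = (m|F|-d)/(|F|-1)$ and read off $(D^\alpha P^{\mathrm{top}})(v_0)$ as the leading $t$-coefficient of $Q_\alpha$, whereas the paper introduces an auxiliary parameter $l$ (eventually set to $m/2$) and passes to $P_0$ via the homogenization trick from Theorem~\ref{dvirthm}---but the underlying argument is the same.
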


\begin{proof}  We argue as in the proof of Theorem \ref{dvirthm}, but now make our polynomial $P$ of much higher degree, while simultaneously vanishing to high order on the Kakeya set.  In the limit when the degree and order go to infinity, this will give asymptotically superior estimates to the multiplicity one argument.

We turn to the details.  Let $E$ be a Kakeya set, and let $1 \leq l \leq m$ be (large) integer parameters to be chosen later.  By the discussion after Lemma \ref{interp-2}, we may find a hypersurface $Z(P)$ of degree $d$ at most $(m+n)|E|^{1/n}$ which vanishes to order at least $m$ at every point of $E$.  In particular, if $i = (i_1,\ldots,i_n)$ is a tuple of natural numbers with $|i| := i_1+\ldots+i_n \leq l$, then $D^i P$ vanishes to order at least $m-|i|$ on the $F$-points $\ell_{x_0,v_0}[F]$ of each line $\ell_{x_0,v_0}$ associated to the Kakeya set $E$, while having degree at most $(m+n)|E|^{1/n}-|i|$.   From Lemma \ref{dich-2} we see that either $\ell_{x_0,v_0}$ is geometrically contained in $Z(D^i P)$, or
$$ |F| (m-|i|) \leq (m+n)|E|^{1/n}-|i|,$$
or both.  Thus if we choose $m,l$ so that
\begin{equation}\label{lame}
|F| (m-l) > (m+n)|E|^{1/n}-l,
\end{equation}
then all of the lines $\ell_{x_0,v_0}$ are geometrically contained in $Z(D^i P)$ for all $i$ with $|i| \leq l$.  Passing to the top order term $P_0 \in F[x_1,\ldots,x_n]$ as in the proof of Theorem \ref{dvirthm}, we conclude that $Z(D^i P_0)$ contains $F^n$, or in other words that $P_0$ vanishes to order at least $l$ on $F^n$.  As $P_0$ is non-zero and has degree at most $(m+n)|E|^{1/n}$, we contradict Lemma \ref{sz-mult} provided that
\begin{equation}\label{lame-2}
l |F| > (m+n)|E|^{1/n}.
\end{equation}
If $|E|^{1/n} < \frac{1}{2} |F|$, then by choosing $l$ to be a sufficiently large integer and setting $m := 2l$, we may simultaneously satisfy both \eqref{lame} and \eqref{lame-2} to obtain a contradiction.  Thus $|E|^{1/n} \geq \frac{1}{2} |F|$, and the claim follows.
\end{proof}

We remark that the above argument can be optimised to give the slight improvement $|E| \geq (2 - \frac{1}{|F|})^{-n} |F|^n$; see \cite{dkss}.  This bound turns out to be sharp up to a multiplicative factor of two; see \cite{saraf}, \cite{mash}.  For further application of the polynomial method (with or without multiplicity) to Kakeya type problems, see \cite{dkss}, \cite{ellenberg}, \cite{kopparty}, \cite{wig}.

\section{Smoothness}\label{smooth-sec}

Many of the deeper applications of the polynomial method proceed by exploiting more of the geometric properties of the hypersurfaces $Z(P)$ that are constructed with this method.  One of the first geometric concepts one can use in this regard is the notion of a \emph{smooth} point on a variety.  For simplicity, we restrict attention in this survey to the case of hypersurfaces $Z(P)$, in which the notion of smoothness is particularly simple:

\begin{definition}[Smooth point]  Let $Z(P)$ be a hypersurface in $\overline{F}^n$ for some $P \in F[x_1,\ldots,x_n]$, and let $p$ be an $F$-point in $Z(P)$, thus $p \in F^n$ and $P(p)=0$.  We say that $p$ is a \emph{smooth point} of $Z(P)$ if $\nabla P(p) \neq 0$, where $\nabla P := (D^{e_1} P,\ldots, D^{e_n} P)$ is the gradient of $P$, defined as the vector consisting of the first-order derivatives of $P$.  Any $F$-point $p$ of $Z(P)$ which is not smooth is said to be \emph{singular}.
\end{definition}

Note from the inverse function theorem that points which are smooth in the above sense are also smooth in the traditional sense when the field $F$ is $\R$ or $\C$.

In the real case $F=\R$, the gradient $\nabla P(p)$ at a smooth point is normal to the tangent hyperplane of $Z(P)$ at $p$; in particular, the only lines through $p$ that can be contained in $Z(P)$ are those which are orthogonal to $\nabla P(p)$.  The same assertion holds in arbitrary fields:

\begin{lemma}\label{orth}  Let $Z(P)$ be a hypersurface in $\overline{F}^n$ for some $P \in F[x_1,\ldots,x_n]$, and let $p$ be a smooth $F$-point.  Let $\ell_{x_0,v_0}$ be a line which is geometrically contained in $Z(P)$ and passes through $p$.  Then $v_0 \cdot \nabla P(p)=0$, where $\cdot$ denotes the dot product.
\end{lemma}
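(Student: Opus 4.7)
The plan is to reduce the $n$-dimensional statement to the one-dimensional factor theorem (Lemma \ref{easy}(i)) by restricting $P$ to the line $\ell_{x_0,v_0}$ and reading off the linear term of its Taylor expansion at $p$.

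First I would reparametrise the line so that the basepoint is $p$ itself. Since $p\in\ell_{x_0,v_0}$, we may write $p=x_0+t_0 v_0$ for some $t_0\in\overline{F}$, and hence $\ell_{x_0,v_0}=\{p+sv_0 : s\in\overline{F}\}$. Define the one-variable polynomial
$$Q(s) := P(p+sv_0) \in \overline{F}[s].$$
The hypothesis that $\ell_{x_0,v_0}$ is geometrically contained in $Z(P)$ means that $Q(s)=0$ for every $s\in\overline{F}$. Because $\overline{F}$ is algebraically closed (hence infinite), Lemma \ref{easy}(i) forces $Q$ to be the zero polynomial, so in particular every coefficient of $Q$ vanishes.

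Next I would compute the coefficient of $s$ in $Q$ using the multidimensional Taylor expansion recalled in Section 2. Substituting $x=p+sv_0$ (so that $x_j-p_j=sv_{0,j}$) into
$$P(x)=\sum_{i_1,\ldots,i_n} D^{i_1,\ldots,i_n}P(p)\,(x_1-p_1)^{i_1}\cdots(x_n-p_n)^{i_n},$$
the only contributions to the $s^1$-coefficient come from multi-indices $(i_1,\ldots,i_n)$ with $i_1+\cdots+i_n=1$, i.e.\ from the standard basis multi-indices $e_1,\ldots,e_n$. These give the coefficient
$$\sum_{j=1}^n D^{e_j}P(p)\,v_{0,j} = v_0\cdot\nabla P(p).$$
Since $Q\equiv 0$, this coefficient must be zero, which is the claim.

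There is no real obstacle: the only thing to check is that the Taylor expansion is a valid identity of polynomials (so that substituting the parametrisation of the line is legitimate) and that the $s^1$-coefficient truly picks up only the first-order Hasse derivatives; both are immediate from the definition of $D^{i_1,\ldots,i_n}$ given in the preceding section. Smoothness of $p$ is not actually used in the proof of this lemma — it is the hypothesis that makes the conclusion nontrivial, since it guarantees $\nabla P(p)\neq 0$ so that the orthogonality condition genuinely constrains $v_0$.
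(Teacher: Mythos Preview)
Your proof is correct and follows essentially the same approach as the paper: restrict $P$ to the line parametrised through $p$, use that $\overline{F}$ is infinite to conclude the restricted polynomial vanishes identically, and identify the linear coefficient as $v_0\cdot\nabla P(p)$. The paper phrases the last step as ``its derivative at zero vanishes'' rather than via the Taylor expansion, but this is the same computation; you have simply written out a little more detail.
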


\begin{proof}  By hypothesis, we have $P( p + t v_0 ) = 0$ for all $t \in \overline{F}$.  As $\overline{F}$ is infinite, this implies that the polynomial $t \mapsto P(p+tv_0)$ vanishes identically, and in particular its derivative at zero vanishes.  But this derivative can be computed to equal $v_0 \cdot \nabla P(p)$, and the claim follows.
\end{proof}

To show the power of this simple lemma when inserted into the polynomial method, we now establish the joints conjecture of Sharir \cite{sharir} in an arbitrary field:

\begin{theorem}[Joints conjecture]\label{joints} Let $F$ be a field, let $n \geq 2$, and let $L$ be a set of $N$ lines in $F^n$.  Define a \emph{joint} to be a point $p$ in $F^n$ with the property that there are $n$ lines in $L$ passing through $p$ which are not coplanar (or more precisely, cohyperplanar) in the sense that they do not all lie in a hyperplane.  Then the number of joints is at most $n N^{n/(n-1)}$.
\end{theorem}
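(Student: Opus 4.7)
The plan is to combine the polynomial method with the smoothness Lemma~\ref{orth} and a strong induction on $|J|$. The base case $|J|=0$ is trivial, so suppose the bound has been established for all configurations with strictly fewer joints than $J$. The first step is to invoke Lemma~\ref{interp} to produce a nonzero polynomial $P \in F[x_1,\ldots,x_n]$ of minimum degree $d$ vanishing on all of $J$; by the crude estimate stated after Lemma~\ref{interp}, $d \leq n|J|^{1/n}$. Applying Lemma~\ref{dich} to each $\ell \in L$ then shows that either $\ell$ is geometrically contained in $Z(P)$, or $|\ell \cap J| \leq d$.

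For each joint $p$ I fix $n$ non-coplanar lines $\ell_1(p),\ldots,\ell_n(p) \in L$ through it, and partition $J = J^\perp \sqcup J^*$ according to whether at least one of the $\ell_i(p)$ escapes $Z(P)$ (the case $J^\perp$) or all of them lie inside $Z(P)$ (the case $J^*$). For $p \in J^*$, the proof of Lemma~\ref{orth} (which only uses $\ell \subset Z(P)$ and does not actually require $p$ to be smooth) forces each direction vector $v_i$ of $\ell_i(p)$ to satisfy $v_i \cdot \nabla P(p)=0$; since the $v_i$ span $F^n$ by non-coplanarity, $\nabla P(p)=0$ and $p$ is singular on $Z(P)$. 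For $p \in J^\perp$, at least one of the $n$ chosen lines escapes $Z(P)$, so by double counting
$$|J^\perp| \;\leq\; \sum_{\ell \not\subset Z(P)} |\ell \cap J| \;\leq\; d\,N \;\leq\; n|J|^{1/n}\, N.$$

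To bound $|J^*|$ I would apply the inductive hypothesis to the smaller configuration $(J^*, L)$, which is still a joints configuration since the $n$ non-coplanar lines through each $p \in J^*$ still lie in $L$. The crucial point to verify is $|J^*| < |J|$, which follows from the minimality of $\deg P$: if every first Hasse derivative $D^{e_i} P$ were identically zero, then over a perfect field $P$ would be constant in characteristic zero or a genuine $p$-th power of a smaller-degree $F$-polynomial in characteristic $p$, either way contradicting the minimality of $\deg P$. Hence some $D^{e_i} P$ is a nonzero polynomial of degree $<d$, which by minimality of $P$ cannot vanish on all of $J$, producing a joint in $J^\perp$ and so $J^* \subsetneq J$. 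The induction then gives $|J^*| \leq nN^{n/(n-1)}$, and combining this with the bound on $|J^\perp|$ and rearranging the resulting inequality yields the claim.

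The main obstacle I foresee is closing the induction with the sharp constant $n$ rather than an inflated $O_n(1)$: a direct combination of $|J^\perp| \leq n|J|^{1/n}N$ and $|J^*| \leq n N^{n/(n-1)}$ gives a recurrence whose solution carries a somewhat larger constant, so recovering the precise constant $n$ requires sharpening the interpolation step (for example replacing $\binom{d+n}{n} \geq d^n/n^n$ by $\binom{d+n}{n} \geq d^n/n!$) or a more careful case analysis. A secondary technical difficulty is the handling of non-perfect base fields of positive characteristic, where $\nabla P \equiv 0$ does not immediately force $P$ to descend to a smaller-degree $F$-polynomial; this must be dealt with by invoking a suitable higher-order Hasse derivative $D^\alpha P$ in place of the first-order gradient.
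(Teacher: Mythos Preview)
Your argument has a genuine gap that is more serious than the constant issue you flag: the induction on $|J|$ does not close for \emph{any} constant $C_n$, not merely for the sharp constant $n$. From the inductive hypothesis you obtain $|J^*| \le C_n N^{n/(n-1)}$, and from Lemma~\ref{dich} you obtain $|J^\perp| \le n|J|^{1/n} N$, hence
\[
|J| \;\le\; C_n N^{n/(n-1)} + n|J|^{1/n} N.
\]
But this inequality is satisfied by $|J| = (C_n + \eps) N^{n/(n-1)}$ for every small $\eps>0$, since the right-hand side then exceeds $C_n N^{n/(n-1)}$ by roughly $n C_n^{1/n} N^{n/(n-1)}$; so you cannot conclude $|J| \le C_n N^{n/(n-1)}$. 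The difficulty is structural: you have only shown $|J^*| < |J|$, which may mean $|J^*| = |J|-1$, and then the induction gives no leverage at all. Iterating the construction (passing from $J$ to $J^*$ to $(J^*)^*$, etc.) and using that the minimal degree drops by at least one each time yields only $|J| \ll N\, d(J)^2$, which combined with $d(J) \le n|J|^{1/n}$ is weaker than the trivial incidence bound for $n\ge 3$ and vacuous for $n=2$.

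The paper supplies precisely the missing idea: a \emph{pruning} step that eliminates your set $J^\perp$ entirely. One fixes a threshold $d$ and repeatedly deletes from $L$ any line carrying at most $d$ of the surviving joints (deleting those joints as well). If this empties the joint set, one reads off $|J| \le dN$ directly. Otherwise a nonempty $J' \subset J$ survives in which \emph{every} line of $L'$ through \emph{every} $p\in J'$ carries more than $d$ surviving joints. Now interpolate a minimal-degree polynomial $P$ on $J'$ (degree at most $d$, provided $|J|\le d^n/n!$): Lemma~\ref{dich} forces all $n$ non-cohyperplanar lines through each $p\in J'$ into $Z(P)$, so by your own Lemma~\ref{orth} argument every point of $J'$ is singular, contradicting minimality. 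The resulting dichotomy ``$|J|\le dN$ or $|J| > d^n/n!$'' then gives $|J|\le (n!)^{1/(n-1)}N^{n/(n-1)}\le nN^{n/(n-1)}$ after optimising in $d$. The point is that pruning removes the need to split $J$ at all: there is no $J^\perp$, hence no recursion and no loss. Your remark about non-perfect fields is accurate and applies equally to the paper's argument as written; for the intended fields ($\R$ and $\F_q$) this is not an issue.
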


The bound here is sharp except for the constant factor of $n$, as can be seen by considering the lines in the coordinate directions $e_1,\ldots,e_n$ passing through a Cartesian product $A_1 \times \ldots \times A_n$, where each $A_1,\ldots,A_n$ is a finite subset of $F$ of cardinality comparable to $N^{1/(n-1)}$.  Partial results on this conjecture, using other methods than the polynomial method, can be found in \cite{chaz}, \cite{sharir}, \cite{sharir-w}, \cite{bct}.  As with the Kakeya conjecture over finite fields, the only known proofs of the full conjecture proceed via the polynomial method; this was first done in the $n=3$ case in \cite{joints}, and for general $n$ in \cite{quilo}, \cite{kss}.  See also \cite{elekes} for some further variants of this theorem.

\begin{proof}  We use an argument from \cite{quilo}.
Let $J$ be the set of joints, and let $d > 0$ be a parameter to be chosen later.  We perform the following algorithm to generate a subset $L'$ of $L$ and a subset $J'$ of $J$ as follows.  We initialise $L' := L$ and $J' := J$.  If there is a line $\ell$ in $L'$ that passes through $d$ or fewer points of $J'$, then we delete those points from $J'$ and delete $\ell$ from $L'$.  We iterate this procedure until all lines remaining in $L'$ pass through more than $d$ points of $J'$ (this may cause $L'$ and $J'$ to be empty). 

There are two cases.  If $J'$ is now empty, then we conclude that $|J| \leq d |L|$, since each point in $J$ was removed along with some line in $L$, and each line removes at most $d$ points.  Now suppose $J'$ is non-empty.  If we have
$$ |J| \leq \frac{d^n}{n!} < \binom{d+n}{n},$$
then by Lemma \ref{interp} and the trivial bound $|J'| \leq |J|$ we may find a hypersurface $Z(P)$ of some degree $d' \leq d$ which passes through all the points in $J'$.  We take $P$ to have minimal degree among all $P$ with $J' \subset Z(P)$; in particular, this forces $P$ to be square-free (that is, $P$ is not divisible by $Q^2$ for any non-constant polynomial $Q \in F[x_1,\ldots,x_n]$).  As $J'$ is non-empty, this also forces the degree $d'$ of $P$ to be at least one.  This in turn implies that $\nabla P$ does not vanish identically, since this can only occur if $F$ has a positive characteristic $p$ and $P$ is a linear combination of the monomials $x^{i_1} \ldots x^{i_n}$ with all $i_1,\ldots,i_n$ divisible by the characteristic $p$, and then by using the Frobenius endomorphism $x \mapsto x^p$ we see that $P = Q^p$ for some polynomial $Q$, contradicting the square-free nature of $P$.

Let $p$ be a point in $J'$.  Then $p$ is a joint, and so there are $n$ lines $\ell_{x_1,v_1},\ldots,\ell_{x_n,v_n}$ in $L$, not all in one hyperplane, which pass through $p$.  These lines must lie in $L'$, for if they were removed in the construction of $L'$ then $p$ would not remain in $J'$.  In particular, these lines meet more than $d$ points in $J'$ and hence in $Z(P)$, which by Lemma \ref{dich} implies that all of these lines are geometrically contained in $Z(P)$.  If $p$ is a smooth point of $Z(P)$, then by Lemma \ref{orth}, this implies that the directions $v_1,\ldots,v_n$ are all orthogonal to $\nabla P(p)$, but this is not possible since this would force the lines $\ell_{x_1,v_1},\ldots,\ell_{x_n,v_n}$ to lie in a hyperplane.  Thus we see that all the points in $J'$ are singular points of $Z(P)$, thus $\nabla P(p) = 0$ for all $p \in J'$.  Setting $D^{e_j} P$ to be one of the non-vanishing derivatives of $P$, we conclude that $p \in Z( D^{e_j} P )$, contradicting the minimality of $P$.  

Summarising the above arguments, we have shown that for any $d$, one of the statements
$$ |J| \leq d|L|$$
and
$$ |J| > \frac{d^n}{n!}.$$
must hold.  If we set $d := (n!)^{1/n} |J|^{1/n}$, we obtain a contradiction unless
$$ |J| \leq (n!)^{1/n} |J|^{1/n} |L|$$
and the claim follows (using the trivial bound $n! \leq n^{n-1}$).
\end{proof}

\section{Bezout's theorem and Stepanov's method}

The previous applications of the polynomial method exploited the geometry of hypersurfaces through their intersections with lines.  Of course, one can also try to study such hypersurfaces through their intersection with more complicated varieties.  One of the most fundamental tools in which to achieve this is \emph{Bezout's theorem}.  This theorem has many different versions; we begin with a classical one.

\begin{theorem}[Bezout's theorem]\label{bez}  Let $F$ be a field, let $d_1,d_2 \geq 0$, and let $P_1,P_2 \in F[x,y]$ be polynomials of degree $d_1,d_2$ respectively, with no common factor\footnote{Here we rely on the classical fact that polynomial rings are unique factorisation domains.}.  Then $Z(P_1,P_2)[F] = Z(P_1)[F] \cap Z(P_2)[F]$ has cardinality at most $d_1d_2$.
\end{theorem}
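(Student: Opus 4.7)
The plan is to reduce Bezout's theorem in two variables to the one-variable factor theorem (Lemma~\ref{easy}(i)) via the resultant. First, since $Z(P_1)[F] \cap Z(P_2)[F] \subset Z(P_1)[\overline{F}] \cap Z(P_2)[\overline{F}]$ and since Gauss's lemma propagates the no-common-factor hypothesis from $F[x,y]$ to $\overline{F}[x,y]$, I would replace $F$ by $\overline{F}$, which has the virtue of being infinite.

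Working over $\overline{F}$, I would then perform a generic affine change of coordinates of the form $(x,y) \mapsto (x + \mu y, y + \lambda x)$, chosen so that after the substitution two conditions hold simultaneously: (a) the coefficient of $y^{d_i}$ in $P_i$ is a nonzero element of $\overline{F}$ (i.e. a genuine constant, so the $y$-degrees are ``visible'' at every fiber), for $i = 1, 2$; and (b) the finitely many common zeros of $P_1$ and $P_2$ have pairwise distinct $x$-coordinates. Each condition rules out only finitely many choices of $(\lambda, \mu)$, so both can be achieved at once over the infinite field $\overline{F}$.

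With this normalization, I would form the resultant $R(x) := \operatorname{Res}_y(P_1, P_2) \in \overline{F}[x]$ with respect to $y$. The three facts I would invoke are: $\deg R \leq d_1 d_2$ (read off the Sylvester matrix); $R$ is nonzero (otherwise $P_1$ and $P_2$ would share a factor in $\overline{F}(x)[y]$ and hence, by Gauss's lemma applied to the UFD $\overline{F}[x]$, a factor in $\overline{F}[x,y]$, contradicting the hypothesis); and the specialization identity $R(x_0) = \operatorname{Res}_y(P_1(x_0,\cdot), P_2(x_0,\cdot))$ valid for every $x_0 \in \overline{F}$ (this uses condition (a), which prevents the $y$-degrees from collapsing at any fiber). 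Each common zero $(x_0, y_0)$ therefore forces $R(x_0) = 0$, and condition (b) ensures that distinct common zeros produce distinct roots of $R$; Lemma~\ref{easy}(i) then caps the total at $d_1 d_2$.

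The main obstacle is the resultant package itself: the degree bound, the specialization identity, and the characterization of the vanishing of $R$ in terms of a common factor via Gauss's lemma. These are standard but nontrivial facts from commutative algebra, and in a self-contained treatment one would establish them directly from the Sylvester matrix by Laplace expansion combined with elementary row operations that encode the Euclidean algorithm for $\overline{F}(x)[y]$.
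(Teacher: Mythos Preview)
Your resultant-based argument is correct and takes a genuinely different route from the paper's proof. The paper works entirely inside $F[x,y]$: it shows by an inclusion--exclusion dimension count on the ideals $(P_1)$, $(P_2)$, and $(P_1)\cap(P_2)=(P_1P_2)$ that the quotient $F[x,y]/(P_1,P_2)$ has $F$-dimension at most $d_1d_2$, and then exhibits, for any $d_1d_2+1$ putative common zeros, that many polynomials whose images in the quotient are linearly independent. Your proof instead projects to the $x$-axis via $\operatorname{Res}_y$ and finishes with Lemma~\ref{easy}(i). One small wrinkle: when you impose condition~(b) you implicitly assume the common zero locus is already finite, which is part of what is being proved; this is harmless, since it suffices to take any $d_1d_2+1$ common zeros (if so many exist) and choose the coordinate change to separate just those. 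Your approach is the classical one and stays close to the one-variable factor theorem, at the cost of importing the resultant package you flag. The paper's approach, by computing $\dim_F F[x,y]/(P_1,P_2)$ directly, produces exactly the quantity that reappears in the multiplicity refinement (Theorem~\ref{btm}), so it feeds more smoothly into the subsequent material; a resultant proof can also be upgraded to track intersection multiplicities, but that requires controlling the order of vanishing of $R$ at each root, which is more delicate than the ring-theoretic bookkeeping the paper sets up.
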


\begin{proof}  We use methods from commutative algebra.  Inside the ring $F[x,y]$, we consider the principal ideals $(P_1) := \{ P_1 Q_1: Q_1 \in F[x,y] \}$ and $(P_2) := \{ P_2 Q_2: Q_2 \in F[x,y] \}$, together with their intersection $(P_1 \cap P_2)$ and sum 
$$(P_1,P_2) = (P_1) + (P_2) = \{ P_1 Q_1 + P_2 Q_2: Q_1,Q_2 \in F[x,y] \}.$$
As $P_1,P_2$ have no common factor, we have
$$ (P_1 \cap P_2) = (P_1 P_2) = \{ P_1 P_2 Q: Q \in F[x,y] \}.$$
For any natural number $n$, the space $F[x,y]_{\leq n}$ of polynomials in $F[x,y]$ of degree at most $n$ has dimension $\binom{n+1}{2}$.  In particular, for sufficiently large $n$, we have
$$ \operatorname{dim} ((P_1) \cap F[x,y]_{\leq n}) = \operatorname{dim} F[x,y]_{\leq n-d_1} = \binom{n-d_1+1}{2}$$
and similarly
$$ \operatorname{dim} ((P_2) \cap F[x,y]_{\leq n}) = \binom{n-d_2+1}{2}$$
and
$$ \operatorname{dim} ((P_1) \cap (P_2) \cap F[x,y]_{\leq n}) = \binom{n-d_1-d_2+1}{2}$$
which implies that
\begin{align*}
&\operatorname{dim} (((P_1) \cap F[x,y]_{\leq n}) + ((P_2) \cap F[x,y]_{\leq n})) \\
&\quad = \binom{n-d_1+1}{2} + \binom{n-d_2+1}{2} - \binom{n-d_1-d_2+1}{2}\\
&\quad = \binom{n+1}{2} - d_1 d_2
\end{align*}
and hence
$$ \operatorname{dim}((P_1,P_2) \cap F[x,y]_{\leq n}) \geq \operatorname{dim}(F[x,y]_{\leq n}) - d_1 d_2.$$
This implies that $(P_1,P_2)$ has codimension at most $d_1 d_2$ in $F[x,y]$, or in other words that the quotient ring $F[x,y]/(P_1,P_2)$ has dimension at most $d_1 d_2$ as a vector space over $F$.  

Now suppose for contradiction that $Z(P_1,P_2)(F)$ contains $d_1d_2+1$ points $(x_i,y_i)$ for $i=1,\ldots,d_1 d_2+1$.  Then one can find $d_1d_2+1$ polynomials in $F[x,y]$ whose restrictions to $Z(P_1,P_2)(F)$ are linearly independent; for instance, one could take the polynomials 
$$Q_i(x,y) := \prod_{1 \leq j \leq d_1 d_2+1: x_j \neq x_i} (x-x_j) \times
\prod_{1 \leq j \leq d_1 d_2+1: y_j \neq y_i} (y-y_j).$$
These polynomials must remain linearly independent in the quotient ring $F[x,y]/(P_1,P_2)$, giving the desired contradiction.
\end{proof}
 
As with several previously discussed lemmas, there is a multiplicity version of Bezout's theorem.  If $P_1,P_2 \in F[x,y]$ are polynomials and $p = (p_1,p_2) \in F^2$, we define the \emph{intersection number} $I_p(P_1,P_2)$ of vanishing of $P_1,P_2$ at $p$ to be the dimension of the $F$-vector space $R_{p} := F[[x-p_1,y-p_2]]/(P_1,P_2)_{p_1,p_2}$, where $F[[x-p_1,y-p_2]]$ is the ring of formal power series $\sum_{i,j} c_{i,j} (x-p_1)^i (y-p_2)^j$ with coefficients in $F$, and $(P_1,P_2)_{p_1,p_2}$ is the ideal in $F[[x-p_1,y-p_2]]$ generated by $P_1,P_2$.  One easily verifies that $I_p(p_1,p_2)$ is positive precisely when $p$ lies in $Z(P_1)[F] \cap Z(P_2)[F]$, since if $p$ lies outside $Z(P_1)[F] \cap Z(P_2)[F]$ then at least one of $P_1$ or $P_2$ may be inverted via Neumann series in $F[[x-p_1,y-p_2]]$.  We then have the following refinement of Bezout's theorem:

\begin{theorem}[Bezout's theorem with multiplicity]\label{btm}  Let $F$ be a field, let $d_1,d_2 \geq 0$, and let $P_1,P_2 \in F[x,y]$ be polynomials of degree $d_1,d_2$ respectively with no common factor.  Then
$$ \sum_{p \in F^2} I_p(P_1,P_2) \leq d_1 d_2.$$
In particular, $I_p(P_1,P_2)$ is finite for every $p$.
\end{theorem}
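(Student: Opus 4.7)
My plan is to refine the argument in the proof of Theorem \ref{bez} by combining its dimension bound with a Chinese Remainder Theorem argument localized at each $F$-rational point of $Z(P_1, P_2)$. The key observation is that Theorem \ref{bez} actually establishes the stronger statement $\dim_F F[x,y]/(P_1, P_2) \leq d_1 d_2$; I will exhibit $\sum_p I_p(P_1, P_2)$ as a lower bound for that same global dimension.

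Let $J := (P_1, P_2) \subset F[x,y]$, let $p^{(1)}, \ldots, p^{(m)}$ enumerate $Z(P_1, P_2)[F]$ (finite by Theorem \ref{bez}), and set $\mathfrak{m}_i := (x - p^{(i)}_1, y - p^{(i)}_2)$. First I introduce the truncated quotients $R_{i,N} := F[x,y]/(J + \mathfrak{m}_i^N)$ for each $i$ and each $N \geq 1$. Since distinct maximal ideals, and hence their powers, are comaximal, the ideals $J + \mathfrak{m}_i^N$ are pairwise comaximal as $i$ varies; so the Chinese Remainder Theorem together with the surjection $F[x,y]/J \twoheadrightarrow F[x,y]\big/\bigcap_i (J + \mathfrak{m}_i^N)$ yields
$$\sum_{i=1}^m \dim_F R_{i,N} \;\leq\; \dim_F F[x,y]/J \;\leq\; d_1 d_2.$$
The sequences $(\dim_F R_{i,N})_N$ are nondecreasing (each $R_{i,N+1}$ surjects onto $R_{i,N}$) and bounded by $d_1 d_2$, hence stabilize to some value $I_i^{\infty}$. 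Next I identify $R_{i,N}$ with $F[[x-p^{(i)}_1, y-p^{(i)}_2]]/(J + \mathfrak{m}_i^N)$ via the inclusion of polynomials into formal Taylor series at $p^{(i)}$, which descends to an isomorphism modulo $\mathfrak{m}_i^N$ since both rings are canonically represented by the degree-$<N$ polynomials in the local coordinates $x - p^{(i)}_1, y - p^{(i)}_2$. Stabilization of $\dim_F R_{i,N}$ then forces $\mathfrak{m}_i^N R_{p^{(i)}} = \mathfrak{m}_i^{N+1} R_{p^{(i)}}$ for $N$ large; Nakayama's lemma applied to this finitely generated module over the Noetherian local ring $F[[x-p^{(i)}_1, y-p^{(i)}_2]]$ forces $\mathfrak{m}_i^N R_{p^{(i)}} = 0$, so $R_{p^{(i)}} = R_{i,N}$ is finite-dimensional of dimension $I_i^{\infty}$. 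Summing over $i$ then gives
$$\sum_{p \in F^2} I_p(P_1, P_2) \;=\; \sum_{i=1}^m I_i^{\infty} \;\leq\; d_1 d_2,$$
and the finiteness of each $I_p$ drops out as a byproduct.

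The main technical hurdle is the bridge between the polynomial-level truncations $R_{i,N}$, which are directly controlled by CRT and the bound from Theorem \ref{bez}, and the formal power series quotients $R_p$ appearing in the definition of $I_p(P_1, P_2)$. The bridge rests on two ingredients: the canonical identification of $F[x,y]/\mathfrak{m}_i^N$ with $F[[x-p^{(i)}_1, y-p^{(i)}_2]]/\mathfrak{m}_i^N$ (both are just degree-$<N$ truncations of Taylor expansions at $p^{(i)}$), and a Nakayama argument inside the complete local power series ring that upgrades stabilization of the truncations to genuine vanishing of $\mathfrak{m}_i^N R_{p^{(i)}}$. The finite-dimensionality of $R_{p^{(i)}}$ is obtained \emph{a posteriori} rather than assumed---this is the delicate point, since \emph{a priori} $P_1$ and $P_2$ could conceivably acquire a common factor in the formal power series ring even though they share none as polynomials.
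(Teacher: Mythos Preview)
Your proof is correct, and it shares with the paper's argument the same backbone: both invoke the bound $\dim_F F[x,y]/(P_1,P_2) \leq d_1 d_2$ established in the proof of Theorem \ref{bez}, and both then compare this global dimension against the sum of the local intersection numbers. The difference lies entirely in how the passage from global to local is carried out. The paper argues directly that the natural map $R = F[x,y]/(P_1,P_2) \to \prod_{p} R_p$ is surjective: from finite-dimensionality of $R$ it extracts, for each $Q$ vanishing at $p$, a minimal polynomial relation among powers of $Q$ modulo $(P_1,P_2)$, and a Neumann-series inversion in $F[[x-p_1,y-p_2]]$ then forces $Q$ to be nilpotent in $R_p$; a partition-of-unity interpolation over the finitely many points completes the surjectivity. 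Your route instead factors through the finite truncations $R_{i,N}$, applies CRT at each level, and lets stabilization plus Nakayama over the Noetherian local ring $F[[x-p_1,y-p_2]]$ do the work of collapsing the truncations onto $R_p$. Your approach is a cleaner instance of the standard commutative-algebra fact that an Artinian ring splits as a product of its local pieces, and it produces finiteness of $I_p$ as a transparent byproduct of Nakayama; the paper's approach is a bit more hands-on and avoids citing Nakayama or Noetherianity of power-series rings, making it marginally more self-contained for a reader without that background.
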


\begin{proof}  It suffices to show that
$$ \sum_{p \in E} I_p(P_1,P_2) \leq d_1 d_2$$
for any finite subset $E$ of $Z(P_1)[F] \cap Z(P_2)[F]$.  

Let $R$ be the commutative $F$-algebra $R := F[x,y]/(P_1,P_2)$, with its localisations $R_p := F[[x-p_1,y-p_2]]/(P_1,P_2)_{p_1,p_2}$ defined previously.  By the proof of the previous theorem, we know that $R$ has dimension at most $d_1 d_2$ as a $F$-vector space, so it suffices to show that the obvious homomorphism from $R$ to $\prod_{p \in E} R_p$ is surjective.  

We now claim that for any $p \in E$ and any polynomial $Q \in F[x,y]$ which vanishes at $p$, the image $Q_p$ of $P$ in $Q_p$ is nilpotent, thus $Q_p^n =0$ for some $n \geq 1$.  Indeed, as $R$ is finite-dimensional, we have some linear dependence of the form
$$ c_1 Q^{i_1} + \ldots + c_m Q^{i_m} = 0 \ \operatorname{mod}\ (P_1,P_2)$$
for some $m \geq 1$, some $i_1 < \ldots < i_m$, and some non-zero coefficients $c_1,\ldots,c_m \in F$.  As $Q$ vanishes at $p$, $i_1$ cannot be zero (as can be seen by evaluating the above identity at $p$), and so one can rewrite the above identity in the form
$$ Q^{i_1} (1 + S) = 0 \ \operatorname{mod}\ (P_1,P_2)$$
for some polynomial $S \in F[x,y]$ which vanishes at $p$.  One can then invert $1+S$ in $F[[x-p_1,x-p_2]]$ by Neumann series, giving $Q_p^{i_1}=0$ as required.

From the above claim we see that for each $p = (p_1,p_2) \in E$, the images of $x-p_1$ and $y-p_2$ in $R_p$ are nilpotent, which implies that any formal power series in $F[[x-p_1,y-p_2]]$ is equal modulo $(P_1,P_2)_p$ to a polynomial in $F[x,y]$, which in turn implies that the obvious homomorphism from $R$ to $R_p$ is surjective.  To finish the proof of the theorem, observe that from polynomial interpolation we may find polynomials $P_p \in F[x,y]$ for each $p \in E$ which equal $1$ at $p$ but vanish at all the other points of $E$.  By raising these polynomials to a sufficiently large power, we may thus assume that the image of $P_p$ in $R_q$ vanishes for $q \in E \setminus \{p\}$ and is invertible in $R_p$.  By considering linear combinations of these polynomials with coefficients in $R$ and using the surjectivity from $R$ to each individual $R_p$, we thus obtain surjectivity from $R$ to $\prod_{p \in E} R_p$ as required.
\end{proof}

Bezout's theorem can be quite sharp\footnote{Indeed, if one works in the projective plane instead of the affine plane, and if one works in the algebraic closure $\overline{F}$ of $F$ rather than in $F$ itself, one can make Bezout's theorem an identity instead of an inequality; see, e.g., \cite{fulton}.}, as for instance can be seen by considering polynomials $P_1(x,y) = f(x)$, $P_2(x,y) = g(y)$ that depend on only one of the two variables.  However, in some cases one can improve the bound in Bezout's theorem by using a weighted notion of degree.  For instance, consider polynomials of the form $P_1(x,y) = y^2 - f(x)$ and $P_2(x,y) = g(x)$, where $f$ is a polynomial of degree $3$ and $g$ is of degree $d$.  A direct application of Bezout's theorem then gives the upper bound of $3d$ for the cardinality of the joint zero set $Z(P_1)[F] \cap Z(P_2)[F]$.  But one can improve this bound to $2d$ by observing that $g$ has at most $d$ zeros, and that for each zero $x$ of $g$, there are at most two roots $y$ to the equation $P_1(x,y)=0$.  We can generalise this observation as follows.  Given a pair $(a,b)$ of natural numbers and a polynomial
$$ P(x,y) = \sum_{i,j} c_{i,j} x^i y^j$$
in $F[x,y]$, define the \emph{weighted $(a,b)$-degree} $\deg_{a,b}(P)$ of $P$ to be the largest value of $ai+bj$ for those pairs $(i,j)$ with $c_{i,j}$ non-zero, or $-\infty$ if $P$ vanishes.  Thus for instance $\deg_{1,1}(P)$ is the usual degree of $P$.

\begin{theorem}[Weighted Bezout's theorem with multiplicity]\label{wbt} Let $a,b$ be positive integers, and let $P_1,P_2 \in F[x,y]$ be polynomials of degree $d_1,d_2$ respectively with no common factor.  Then
$$ \sum_{p \in F^2} I_p(P_1,P_2) \leq \frac{\deg_{a,b}(P) \deg_{a,b}(Q)}{ab}.$$
\end{theorem}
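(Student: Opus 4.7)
The plan is to adapt the proof of the unweighted Bezout with multiplicity (Theorem~\ref{btm}) by replacing the standard degree filtration on $F[x,y]$ with the $(a,b)$-weighted one. Write $D_i := \deg_{a,b}(P_i)$, let $V_N \subset F[x,y]$ denote the $F$-span of monomials $x^iy^j$ with $ai+bj \leq N$, and set $g(N) := \dim_F V_N = \#\{(i,j) \in \Z_{\geq 0}^2 : ai+bj \leq N\}$. As in the proof of Theorem~\ref{btm}, it suffices to bound $\dim_F R$ with $R := F[x,y]/(P_1,P_2)$: the surjection $R \to \prod_{p \in F^2} R_p$ constructed there (which uses only finite-dimensionality of $R$, still guaranteed by the unweighted Bezout applied to the coprime pair $P_1, P_2$) yields $\sum_p I_p(P_1,P_2) \leq \dim R$.

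The first step repeats the linear-algebra argument of Theorem~\ref{bez} in this filtration. Since $F[x,y]$ is a domain, $\deg_{a,b}$ is additive under multiplication, so $(P_i) \cap V_N = P_i \cdot V_{N - D_i}$ has dimension $g(N - D_i)$; and the coprimality of $P_1, P_2$ in the UFD $F[x,y]$ forces $(P_1) \cap (P_2) = (P_1 P_2)$, so $P_1 V_{N-D_1} \cap P_2 V_{N-D_2} = P_1 P_2 V_{N-D_1-D_2}$ has dimension $g(N - D_1 - D_2)$. Dimension inclusion-exclusion on the two subspaces $P_i V_{N - D_i}$ of $V_N$ then gives, for every $N$ large enough that $V_N$ surjects onto $R$,
\[
\dim R \;\leq\; \Delta g(N) \;:=\; g(N) - g(N-D_1) - g(N-D_2) + g(N-D_1-D_2).
\]

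The second step extracts the clean bound $D_1 D_2/(ab)$ from this one-parameter family of inequalities. Unlike the unweighted case, $g$ is merely a quasi-polynomial of period $\mathrm{lcm}(a,b)$, so $\Delta g$ need not stabilize to a single integer; I would average it out. Setting $f(N) := g(N) - g(N - D_2)$, so that $\Delta g(N) = f(N) - f(N - D_1)$, and reindexing (using $g(N) = 0$ for $N < 0$) telescopes the sum:
\[
\sum_{N=0}^{M-1} \Delta g(N) \;=\; \sum_{N = M-D_1}^{M-1} f(N).
\]
Because $g$ is a degree-two quasi-polynomial with leading term $N^2/(2ab)$, its first difference $f$ is a degree-one quasi-polynomial with leading term $N D_2/(ab)$; in particular $f(N) = N D_2/(ab) + O(1)$ for $N$ large. (This can be read off, e.g., from $\sum_N f(N) z^N = (1 - z^{D_2})/[(1-z)(1-z^a)(1-z^b)]$, whose only pole of order $\geq 2$ is the double pole at $z = 1$ with principal part $(D_2/(ab))(1-z)^{-2}$; all other poles lie on $|z|=1$ and contribute bounded, oscillating corrections.) Summing therefore gives $\sum_{N=0}^{M-1} \Delta g(N) = M D_1 D_2/(ab) + O(1)$. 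Combining with the pointwise bound $\dim R \leq \Delta g(N)$ for all $N \geq N_0$ yields $(M - N_0)\dim R \leq M D_1 D_2/(ab) + O(1)$; dividing by $M$ and letting $M \to \infty$ gives $\dim R \leq D_1 D_2/(ab)$, completing the proof.

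The main obstacle is this second step. In the unweighted setting the combinatorial identity $\binom{N+1}{2} - \binom{N - d_1 + 1}{2} - \binom{N - d_2 + 1}{2} + \binom{N - d_1 - d_2 + 1}{2} = d_1 d_2$ holds exactly for all large $N$, delivering the Bezout bound on the nose; in the weighted setting the analogous second difference only oscillates about $D_1 D_2/(ab)$ by bounded amounts, and one needs the telescoping identity together with the linear-growth asymptotic for $f$ to average this oscillation away.
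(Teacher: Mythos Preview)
Your proof is correct and takes a genuinely different route from the paper's. Both arguments reduce to showing $\dim_F R \leq D_1 D_2/(ab)$ with $R = F[x,y]/(P_1,P_2)$ and $D_i = \deg_{a,b}(P_i)$, invoking the surjection $R \to \prod_p R_p$ from Theorem~\ref{btm} (which, as you note, only needs $\dim R < \infty$). The difference is in how that dimension bound is obtained. The paper uses a substitution: set $Q_i(x,y) := P_i(x^a,y^b)$, so that $\deg Q_i = D_i$ in the ordinary sense, and observe that the ring map $P \mapsto P(x^a,y^b)$ identifies $F[x,y]/(P_1,P_2)$ with $F[x^a,y^b]/(Q_1,Q_2)^{(a,b)}$. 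Since $F[x,y]$ is free of rank $ab$ over the subring $F[x^a,y^b]$, tensoring up gives $\dim_F F[x,y]/(Q_1,Q_2) = ab \cdot \dim_F F[x,y]/(P_1,P_2)$, and the left side is at most $D_1 D_2$ by the unweighted Theorem~\ref{bez}. This one-line reduction bypasses the quasi-polynomial oscillations entirely, though it implicitly relies on flatness of $F[x,y]$ over $F[x^a,y^b]$ to transport the identity $(Q_1)\cap(Q_2)=(Q_1Q_2)$ up to the larger ring. Your approach instead runs the filtration argument of Theorem~\ref{bez} directly with the weighted grading, confronts the fact that the second difference $\Delta g(N)$ merely averages to $D_1D_2/(ab)$ rather than equalling it, and handles this with the telescoping-plus-asymptotic argument. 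The paper's trick is shorter; your method is more self-contained (no change of rings) and makes explicit why the weighted bound has the product form it does.
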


Note that Theorem \ref{btm} is just the $a=b=1$ case of this theorem.  In the case $P_1(x,y) = y^2-f(x)$, $P_2(x,y)=g(x)$ discussed earlier, we see that Theorem \ref{wbt} with $(a,b) = (2,3)$ gives the optimal bound of $2d$ instead of the inferior bound of $3d$ provided by Theorem \ref{btm}.

\begin{proof}  Write $d_1 := \deg_{a,b}(P)$ and $d_2 := \deg_{a,b}(Q)$.  By the arguments used to prove Theorem \ref{btm}, it will suffice to show that the $F$-vector space $F[x,y]/(P_1,P_2)$ has dimension at most $d_1 d_2 / a b$.  

Now let $Q_1,Q_2$ be the polynomials $Q_1(x,y) := P_1(x^a, y^b)$ and $Q_2(x,y) := P_2(x^a, y^b)$.  Then $Q_1,Q_2$ lie in $F[x^a,y^b]$ and have unweighted degree $d_1, d_2$ respectively.  Applying the change of variables $x \mapsto x^a$, $y \mapsto y^b$, we see that $F[x,y]/(P_1,P_2)$ has the same dimension as $F[x^a,y^b]/(Q_1,Q_2)^{(a,b)}$, where $(Q_1,Q_2)^{(a,b)}$ is the ideal of $F[x^a,y^b]$ generated by $Q_1,Q_2$.  On the other hand, by the arguments used to prove Theorem \ref{bez}, we conclude that
$F[x,y]/(Q_1,Q_2)$ has dimension at most $d_1 d_2$.  But $F[x,y]$ is a free module of dimension $ab$ over $F[x^a,y^b]$, which implies that 
the dimension of $F[x,y]/(Q_1,Q_2)$ is $ab$ times the dimension of $F[x,y]/(Q_1,Q_2)$.  The claim follows.
\end{proof}

We now give an application of Bezout's theorem to a fundamental problem in number theory, namely that of counting $F$-points on a curve, when $F$ is a finite field $F = \F_q$.  For simplicity of exposition we will first discuss elliptic curves of the form
$$ C := \{ (x,y): y^2 = f(x) \}$$
for some degree $3$ polynomial $f \in F[x]$, although the method discussed here (known as \emph{Stepanov's method}) applies to general curves with little further modification.  We are interested in bounding the size of $C[F]$.  By intersecting $C$ with the curve $\{ x: x^q - x = 0 \}$ and using Bezout's theorem, we obtain the upper bound $|C[F]| \leq 3q$; using the weighted Bezout's theorem we may improve this to $|C[F]| \leq 2q$.  This bound is also obvious from the observation that for any $x \in F$, there are at most two solutions $y \in F$ to the equation $y^2 = f(x)$.  However, one expects to do better because $f(x)$ should only be a quadratic residue approximately half of the time (note that $f$ cannot be a perfect square as it has odd degree).  Indeed, we have the following classical bound that confirms this intuition:

\begin{proposition}[Hasse bound]\label{hasse}  If $q$ is odd, then we have $|C[F]| = q + O(q^{1/2})$.  
\end{proposition}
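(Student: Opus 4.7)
The plan is to apply Stepanov's method. The strategy is to first convert the point count into a character sum, then produce an auxiliary polynomial on the curve $C$ whose degree, compared against the number of vanishings it is forced to have, yields the desired estimate.

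\textbf{Step 1 (Reduction to counting squares).} Let $\chi \colon F \to \{-1,0,1\}$ be the quadratic character (with $\chi(0) := 0$). For each $x_0 \in F$ with $f(x_0) \neq 0$, the equation $y^2 = f(x_0)$ has exactly $1 + \chi(f(x_0))$ solutions $y \in F$, while $f$ has at most three roots in $F$. Hence
$$|C[F]| = q + \sum_{x \in F} \chi(f(x)) + O(1).$$
Writing $N_\pm$ for the number of $x \in F$ for which $f(x)$ is a non-zero square (resp.\ non-square), the sum equals $N_+ - N_-$. Applying the identical argument below to the quadratic twist $y^2 = \alpha f(x)$ for a fixed non-square $\alpha \in F^\times$ (which swaps $N_+$ with $N_-$) shows it is enough to prove the one-sided bound $N_+ \leq q/2 + O(q^{1/2})$.

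\textbf{Step 2 (Auxiliary polynomial on $C$).} I will construct a nonzero polynomial $R \in F[x]$ of degree at most $qm/2 + O(q^{1/2} m)$, for a parameter $m \sim q^{1/2}$, that vanishes to order at least $m$ at every $x_0 \in F$ with $f(x_0)$ a non-zero square. By Lemma \ref{easy-2}(i) this forces $m N_+ \leq \deg R$, whence $N_+ \leq q/2 + O(q^{1/2})$ after dividing. The polynomial $R$ is built as a \emph{norm} from the coordinate ring $F[C] = F[x,y]/(y^2 - f)$: seek $A, B \in F[x]$, not both zero, so that
$$\Psi(x,y) := A(x) + y B(x) \in F[C]$$
vanishes to order $\geq m$ at each of the $N_+$ smooth $F$-points $(x_0, y_0)$ of $C$ with a designated square root $y_0 = f(x_0)^{(q+1)/4}$ (taking $q \equiv 3 \pmod 4$; the case $q \equiv 1 \pmod 4$ is handled in parallel using a different explicit square-root expression). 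Then $R(x) := A(x)^2 - f(x) B(x)^2 = \Psi \cdot \overline{\Psi}$, with $\overline{\Psi} := A - yB$. Since $f$ has odd degree $3$, $f$ is not a square in $F[x]$, so $R$ is nonzero whenever $(A,B) \neq (0,0)$. At each chosen smooth point $(x_0, y_0)$ the local parameter is $x - x_0$, and $\overline{\Psi}(x_0, y_0) = -2 y_0 B(x_0)$ is non-zero generically (or after $O(1)$ extra linear conditions on $(A,B)$); thus the order of vanishing of $R \in F[x]$ at $x_0$ equals the order of vanishing of $\Psi$ at $(x_0, y_0)$ on $C$, which is $\geq m$.

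\textbf{Step 3 (Stepanov's construction of $\Psi$).} A naive dimension count on $\{A + yB : \deg A, \deg B \leq D\}$ compares $2D+2$ free coefficients to $m N_+$ vanishing constraints, and is circular: it only produces $\Psi$ when $D \gtrsim m N_+/2$, which in turn only yields $N_+ \lesssim 2D/m$ — a vacuous bound unless $D$ is tied to $q$. Stepanov's idea is to parametrise $A, B$ through a Frobenius-shifted ansatz such as
$$A(x) = \sum_{i=0}^{L-1} \alpha_i(x)\,(x^q - x)^i, \qquad B(x) = \sum_{i=0}^{L-1} \beta_i(x)\,(x^q - x)^i$$
with $\alpha_i, \beta_i$ of degree $<q$. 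At every $F$-point, $(x^q - x)^i$ vanishes to order exactly $i$ in the local parameter (since $(x_0 + t)^q - (x_0 + t) = t^q - t$ for $x_0 \in F$), so the $i$th summand automatically contributes vanishing order $\geq i$ for free. Only Taylor coefficients of order $<m$ have to be annihilated at each of the $N_+$ points, leaving an $m N_+$-condition linear system on a space of dimension $\sim 2qL$; provided $2qL > m N_+$, a nonzero $\Psi$ exists. One then balances $L, m$ so that $\deg A, \deg B \leq Lq$ translates into $\deg R \leq qm/2 + O(q)$.

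The key obstacle is this Step 3: making the Frobenius-shifted book-keeping tight enough to produce the sharp constant $q/2$ in the degree bound on $R$, rather than a trivial $O(q)$. Once the construction is executed, the rest is routine: Step 2 packages the output of Step 3 as a polynomial in $x$ alone, the multiplicity version of the factor theorem (Lemma \ref{easy-2}(i)) gives $m N_+ \leq \deg R$, and optimisation at $m \sim q^{1/2}$ completes the one-sided bound and hence the Hasse estimate.
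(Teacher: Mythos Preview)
Your overall framework is the ``elementary'' variant of Stepanov's method that the paper alludes to in its remark after the proof (projecting to the affine line rather than working on $C\times C$ with Bezout). Steps 1 and 2 are fine, and this route is legitimate. However, Step 3 as written has a real gap, and you correctly flag it as the ``key obstacle'' --- but your proposed ansatz does not resolve it.

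Writing $A(x)=\sum_{i<L}\alpha_i(x)(x^q-x)^i$ with $\deg\alpha_i<q$ is only a change of basis: every polynomial of degree $<qL$ can be so expressed, so the parameter space is still $qL$-dimensional and the dimension count is unchanged. Worse, $(x^q-x)$ vanishes at \emph{all} $x_0\in F$, not just those with $f(x_0)$ a square, so nothing in this ansatz sees the distinction between $N_+$ and $N_-$. Your count then reads: $2qL>mN_+$ for existence, while $\deg R\le 2qL+O(1)$ and you want $\deg R\le qm/2+O(q)$, forcing $L\le m/4$, hence $2qL\le qm/2$, hence the existence condition already assumes $N_+<q/2$. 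The circularity you identified in the ``naive'' approach is still present.

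What breaks the circularity in Stepanov's method is that one does \emph{not} impose $mN_+$ point-by-point linear conditions; one imposes a single \emph{algebraic} divisibility condition (independent of $N_+$) that forces vanishing at all target points simultaneously. Concretely, on $C$ one has the Frobenius relation $y^q=y\,f(x)^{(q-1)/2}$, so polynomials built from monomials $x^j f(x)^{k(q-1)/2}$ (or equivalently $x^j y^{k(q-1)}$) evaluate at $F$-points with $f(x_0)$ a nonzero square as if $f(x)^{(q-1)/2}=1$; one then arranges high-order vanishing \emph{as a formal identity in $F[x]$}, which costs only $O(m^2)$ conditions rather than $mN_+$. Your ansatz lacks any ingredient (like $f(x)^{(q-1)/2}$ or $y^q$) that encodes the square/non-square dichotomy, which is why the bookkeeping cannot close.

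The paper sidesteps this bookkeeping by working geometrically on $C\times C$: the target points become $C_1\cap C_2$ (diagonal meets Frobenius graph), the vanishing to order $m$ is imposed along the whole curve $C_1$ via an interpolation lemma with an explicit dimension count (their Lemma on interpolation in $R=F[x,y,x',y']/(y^2-f(x),(y')^2-f(x'))$), non-vanishing on $C_2$ is checked via a monomial basis lemma, and the final count comes from weighted Bezout. The Frobenius enters only when one specialises $(x',y')\mapsto(x^q,y^q)$ at the end, so the construction of the auxiliary polynomial never refers to $N_+$ and no circularity arises.
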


Here and in the sequel, we use $O(X)$ to denote any quantity bounded in magnitude by $CX$ for an absolute constant $C$.  In particular, the above proposition is non-trivial only in the regime in which $q$ is large.

The requirement that $q$ is odd is needed to avoid the points on $C[F]$ occurring with multiplicity two; the statement and proof of this bound can be extended to the even $q$ case after one accounts for multiplicity, but we will not do so here.

This bound was first established by Hasse \cite{hasse} using number-theoretic arguments; we give here the elementary argument of Stepanov \cite{stepanov} (incorporating some geometric reinterpretations of this argument due to Bombieri \cite{bombieri}), which in fact generalises to give the Riemann hypothesis for arbitrary curves over a finite field; see \cite{schmidt}.  

We now begin the proof of Proposition \ref{hasse}. The first observation is that it suffices to establish the upper bound
\begin{equation}\label{cq}
 |C[F]| \leq q + O(q^{1/2}).
\end{equation}
Indeed, if we introduce the dilated curve
$$ C' := \{ (x,y): y^2 = a f(x) \},$$
where $a \in F$ is a non-zero quadratic non-residue in $F$, then we observe that for each $x \in F$ with $f(x) \neq 0$ there are \emph{exactly} two $y$ for which either $y^2 = f(x)$ or $y^2 = af(x)$, by dividing into cases depending on whether $f(x)$ is a quadratic residue or not.  This gives the bound
$$ |C[F]| + |C'[F]| = 2q + O(1)$$
and we thus see that the lower bound $|C[F]| \geq q - O(q^{1/2})$ is implied by the upper bound $|C'[F]| \leq q + O(q^{1/2})$.  Since $C'$ is of essentially the same form as $C$, it thus suffices to establish the upper bound \eqref{cq}.

It remains to prove \eqref{cq}.  We had previously obtained upper bounds of the form $2q$ or $3q$ by containing $C[F]$ inside the intersection of $C$ with $\{ (x,y): x^q = x \}$ or $\{ (x,y): y^q = y \}$.  Indeed, $C[F]$ is the triple intersection of these three curves.  However, instead of viewing $C[F]$ as the intersection of three plane curves, one can identify $C[F]$ with the intersection of two curves in the surface $C \times C$.  Indeed, if one considers the diagonal curve
$$ C_1 := \{ (p,p): p \in C \}$$
and the graph
$$ C_2 := \{ (p, \operatorname{Frob}(p)): p \in C\},$$
where $\operatorname{Frob} \colon \overline{F}^2 \to \overline{F}^2$ is the Frobenius map $\operatorname{Frob}(x,y) := (x^q,y^q)$, then $C_1,C_2$ are two curves in $C \times C$ (note that $\operatorname{Frob}$ preserves $C$), and 
$$ C_1 \cap C_2 = \{ (p,p): p \in C[F] \}.$$
In particular, the upper bound \eqref{cq} is equivalent to the bound
$$ |C_1 \cap C_2| \leq q + O(q^{1/2}).$$

If we directly apply Bezout's theorem (or analogues of Bezout's theorem for the surface $C \times C$), we will still only obtain upper bounds of the form $2q$ or $3q$ for $C_1 \cap C_2$.  To do better than this, the idea is to use the polynomial method and locate a polynomial $P$ on $C \times C$ that does not vanish identically on $C_2$, but vanishes to high order on $C_1$, so that tools such as Theorem \ref{wbt} may be applied to give improved upper bounds on $C_1 \cap C_2$ (cf. the use of multiplicity to improve Theorem \ref{dvirthm} to Theorem \ref{dvir-improv}).

We turn to the details.  As we are now working on the surface $C \times C$ instead of the plane, we have to slightly modify the definitions of some key concepts such as ``polynomial'' or ``multiplicity''.  On the plane, we used $F[x,y]$ as the ring of polynomials.  On $C \times C$, the analogous polynomial ring is given by
\begin{equation}\label{dif}
 R := F[ x, y, x', y' ]  / ( y^2 - f(x), (y')^2 - f(x') ),
\end{equation}
where $( y^2 - f(x), (y')^2 - f(x') )$ denotes the ideal in $F[x,y,x',y']$ generated by $y^2-f(x)$ and $(y')^2 - f(x')$.  Note that any element of $R$ can be viewed as a function from $C \times C$ to $\overline{F}$.   In particular, the restriction of $R$ to $C_1$ or $C_2$ is well-defined.  For a polynomial $P \in F[x,y]$ on the plane, we say that $P$ vanishes to order at least $m$ at a point $p = (p_1,p_2) \in F^2$ if the Taylor expansion of $P$ at $(p_1,p_2)$ has vanishing coefficients at every monomial of degree less than $m$.  An equivalent way to write this is $P \in (x-p_1,y-p_2)^m$, where $(x-p_1,y-p_2)$ is the ideal in $F[x,y]$ generated by $x-p_1$ and $y-p_2$, and $(x-p_1,y-p_2)^m$ is the ideal generated by products of $m$ elements in $(x-p_1,y-p_2)$.  Motivated by this, we will say that a polynomial $P \in R$ vanishes to order at least $m$ at a point $(p,p') = (p_1,p_2,p'_1,p'_2)$ if it lies in the ideal $(x-p_1,y-p_2,x'-p'_1,y'-p'_2)^m$.  We may now apply Theorem \ref{wbt} as follows:

\begin{proposition}\label{dmc}  Let $m \geq 1$.  Suppose that one has a polynomial $P \in F[x,y,x',y']$ which does not vanish identically on $C_2$, but vanishes to order $m$ at every smooth point of $C_1$ (after projecting $P$ to $R$).  Suppose that the polynomial $P(x,y,x^q,y^q) \in F[x,y]$ has weighted $(2,3)$-degree at most $D$.  Then $|C_1 \cap C_2| \leq \frac{D}{m} + 3$.
\end{proposition}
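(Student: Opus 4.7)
The plan is to restrict $P$ to $C_2$ via the Frobenius-graph parameterization $\Phi \colon C \to C_2$, $(x,y) \mapsto (x, y, x^q, y^q)$, obtaining $\tilde{P}(x,y) := P(x, y, x^q, y^q) \in F[x,y]$ of weighted $(2,3)$-degree at most $D$, and then apply the weighted Bezout theorem (Theorem \ref{wbt}) with $a = 2$, $b = 3$ to the pair $y^2 - f(x)$ and $\tilde{P}$. To invoke Theorem \ref{wbt} I first need these to share no common factor. Because $\deg f = 3$ is odd, $f$ is not a square in $F(x)$, so $y^2 - f(x)$ is an irreducible quadratic in $y$ over $F(x)$ and hence irreducible in $F[x,y]$ by Gauss' lemma; the hypothesis that $P$ does not vanish identically on $C_2$ translates precisely to the non-divisibility $y^2 - f(x) \nmid \tilde{P}$.

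The second step is to transfer the vanishing hypothesis from $P$ to $\tilde{P}$. The substitution $x' \mapsto x^q$, $y' \mapsto y^q$ descends to a well-defined ring homomorphism $\Phi^{*} \colon R \to F[x,y]/(y^2-f(x))$, because $f \in \F_q[x]$ yields $f(x)^q = f(x^q)$ and hence $(y^q)^2 - f(x^q) = (y^2 - f(x))^q$. Now fix a smooth $F$-point $p = (p_1, p_2)$ of $C$; then $(p,p)$ is a smooth point of $C_1$, so by hypothesis the image of $P$ in $R$ lies in the ideal $(x-p_1, y-p_2, x'-p_1, y'-p_2)^m$. Since $p_1, p_2 \in F$, Frobenius yields $x^q - p_1 = (x - p_1)^q$ and $y^q - p_2 = (y - p_2)^q$, so $\Phi^{*}$ carries this ideal into $(x - p_1, y - p_2)$ inside $F[x,y]/(y^2 - f(x))$, and its $m$-th power into $(x - p_1, y - p_2)^m$. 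Consequently $\tilde{P}$ lies in the $m$-th power of the maximal ideal of $C$ at $p$; since $p$ is smooth, the completed local ring of $C$ at $p$ is isomorphic to $F[[t]]$ for a uniformizer $t$, and the standard DVR computation gives $I_p(y^2 - f(x), \tilde{P}) \geq m$.

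With these ingredients in place, Theorem \ref{wbt} applied with $\deg_{2,3}(y^2 - f(x)) = 6$ and $\deg_{2,3}(\tilde{P}) \leq D$ yields
$$\sum_{p \in F^2} I_p(y^2 - f(x), \tilde{P}) \leq \frac{6D}{2 \cdot 3} = D.$$
Restricting this sum to the smooth $F$-points of $C$, each of which contributes at least $m$, produces $m \cdot |\{\text{smooth } p \in C[F]\}| \leq D$. In the odd characteristic we work in, the singular $F$-points of $C$ are the solutions of $y = 0$, $f(x) = 0$, $f'(x) = 0$, which correspond to multiple roots of the cubic $f$ and so number at most $3$; combined with $|C_1 \cap C_2| = |C[F]|$ this gives the desired $|C_1 \cap C_2| \leq D/m + 3$. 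The main obstacle is the transfer of vanishing order in the second paragraph: it relies both on the well-definedness of $\Phi^{*}$ (ultimately needing $f \in \F_q[x]$) and on the Frobenius identities $x^q - p_i = (x - p_i)^q$ afforded by the $F$-rationality of $p$, which together are what push the maximal ideal of $R$ at $(p,p)$ into the maximal ideal of $F[x,y]/(y^2-f(x))$ at $p$; with these in hand the rest is a routine invocation of Theorem \ref{wbt}.
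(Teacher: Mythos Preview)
Your argument is correct and follows essentially the same route as the paper's proof: restrict $P$ to $C_2$ via $(x',y') \mapsto (x^q,y^q)$ to obtain $\tilde P$, verify that $y^2-f(x)$ is irreducible and does not divide $\tilde P$, push the order-$m$ vanishing at smooth diagonal points through the Frobenius substitution (using $x^q-p_1=(x-p_1)^q$ for $p_1\in\F_q$) to get $I_p(y^2-f(x),\tilde P)\ge m$, and then invoke the weighted Bezout theorem with $\deg_{2,3}(y^2-f(x))=6$. The only cosmetic differences are that the paper carries out the local computation explicitly (solving $y-p_2$ in terms of $x-p_1$ via the Taylor expansion of $y^2-f(x)$, assuming $p_2\neq 0$) where you invoke the DVR structure of the completed local ring at a smooth point, and that the paper discards the at most three points with $p_2=0$ rather than the singular points; both choices lead to the same ``$+3$''.
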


\begin{proof}  
Let $(p,p) = (p_1,p_2,p_1,p_2)$ be a point in $C_1 \cap C_2$ with $p_2 \neq 0$.  The significance of the assumption $p_2 \neq 0$ is that it forces $(p_1,p_2)$ to be a smooth point of $C$ (here we use the hypothesis that $q$ is odd).  Note that $f$ has at most three zeros, so there are at most three points of $C_1 \cap C_2$ with $p_2=0$.  Thus it suffices to show that there are at most $D/m$ points $(p,p)$ in $C_1 \cap C_2$ with $p_2 \neq 0$.

By hypothesis, $P$ lies in the ideal $(x-p_1,y-p_2,x'-p_1,y'-p_2)^m$ after quotienting by $(y^2-f(x), (y')^2 - f(x'))$.  Setting $P'(x,y) := P(x,y,x^q,y^q)$, we conclude that $P'$ lies in the ideal $(x-p_1,y-p_2,x^q-p_1,y^q-p_2)$ after quotienting by $(y^2-f(x), (y^q)^2 - f(x^q))$.  But $x^q-p_1 = x^q-p_1^q$ is a multiple of $x-p_1$, and similarly for $y^q-p_2$ and $(y^q)^2 - f(x^q)$, so $P'$ lies in $(x-p_1,y-p_2)^m$ after quotienting by $(y^2-f(x))$.  We may Taylor expand $y^2-f(x)$ as $2p_2 (y-p_2) + f'(p_1) (x-p_1) + \ldots$ where the error $\ldots$ lies in $(x-p_1,y-p_2)^2$; note that $2p_2$ is non-zero by hypothesis.  Now let $F[[x-p_1,y-p_2]]$ be the ring of formal power series in $x-p_1$ and $y-p_2$ with coefficients in $F$.  In the quotient ring $F[[x-p_1,y-p_2]]/(y^2-f(x))$, we then have the expansion
$$ y-p_2 = -\frac{f'(p_1)}{2p_2} (x-p_1) + \ldots$$
where the error again lies in $(x-p_1,y-p_2)^2$; in particular, by Neumann series we see that in this ring $y-p_2$ is a multiple of $x-p_1$, and hence $P'$ is a multiple of $(x-p_1)^m$.  We conclude that the monomials $1, (x-p_1),\ldots,(x-p_1)^{m-1}$ are linearly independent in $F[[x-p_1,y-p_2]]/(P', y^2-f(x))$, and so $I_p( P', y^2 - f(x) ) \geq m$.  On the other hand, by hypothesis $P'$ does not vanish on $C$ and so does not share a common factor with $y^2-f(x)$ (note that the latter polynomial is irreducible since $f$, having odd degree, cannot be a perfect square).  Since $P'$ has weighted $(2,3)$-degree at most $D$ by hypothesis, the claim now follows from Theorem \ref{wbt} (noting that $y^2-f(x)$ has weighted $(2,3)$-degree $6$).
\end{proof}

To use this proposition, we need to locate a polynomial $P \in F[x,y,x',y']$ of not too large a degree that vanishes to high order at $C_1$, without vanishing entirely on $C_2$.  To achieve the second goal, we use the following simple observation:

\begin{lemma}[Good basis of polynomials]\label{good} Let $P \in F[x,y,x',y']$ be a non-trivial linear combination of the monomials $x^i y^j (x')^{i'} (y')^{j'}$ with $j,j' \leq 1$, $2i+3j < q$.  Then $P$ does not vanish on $C_2$.
\end{lemma}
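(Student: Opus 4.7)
The plan is to show that $P(x,y,x^q,y^q)$ is a non-zero element of the coordinate ring $F[x,y]/(y^2-f(x))$ of $C$, which is equivalent to $P$ not vanishing on $C_2$. I would do this by using the pole order at the point at infinity of $C$ to separate the monomials.

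First I would pass to the function field $F(C)$ of the smooth projective model of $C$ (assuming, as is implicit throughout the Hasse-bound setting, that $f$ is separable so that $C$ is a genuine elliptic curve). At the unique point $\infty$ at infinity, a standard Weierstrass computation shows that the discrete valuation $v = v_\infty$ satisfies $v(x) = -2$ and $v(y) = -3$, so
\[
  v\bigl( x^i y^j (x^q)^{i'} (y^q)^{j'} \bigr) = -(2i+3j) - q(2i'+3j').
\]

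The main step is then to verify that the non-negative integers $(2i+3j) + q(2i'+3j')$ are pairwise distinct as $(i,j,i',j')$ ranges over tuples with $j,j' \in \{0,1\}$ and $2i+3j < q$. Since $2i+3j < q$, reducing modulo $q$ recovers $2i+3j$; and for $j \in \{0,1\}$ this quantity takes the values $0,2,3,4,5,\dots$ (every non-negative integer except $1$), each attained by a unique $(i,j)$. Dividing the remainder by $q$ then yields $2i'+3j'$, which determines $(i',j')$ by the same enumeration.

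With this distinctness in hand, the image of $P$ in $F(C)$ is a non-trivial sum of elements having pairwise distinct valuations; the unique term of smallest (most negative) valuation cannot be cancelled, and so the sum is non-zero. Hence $P(x,y,x^q,y^q) \neq 0$ in $F(C)$, which means $P$ does not vanish identically on $C_2$. The one non-routine ingredient is cleanly setting up the valuation at $\infty$, but this is standard Weierstrass geometry; the hypothesis $2i+3j<q$ is precisely what is needed to keep the pole orders separated modulo $q$.
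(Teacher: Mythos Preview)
Your argument is correct, and the core idea is exactly the same as the paper's: the integers $(2i+3j)+q(2i'+3j')$ attached to the admissible monomials are pairwise distinct, so there is a unique ``leading'' term that cannot be cancelled. Where you and the paper differ is only in how this leading term is interpreted. You pass to the function field of the projective model and read off the pole order at $\infty$ via $v_\infty(x)=-2$, $v_\infty(y)=-3$; the paper stays in the polynomial ring $F[x,y]$, assigns each monomial its weighted $(2,3)$-degree (which is precisely your pole order), observes that a unique monomial has maximal weighted degree, and notes that a single monomial cannot be divisible by the weighted-homogeneous top part $y^2-ax^3$ of $y^2-f(x)$. The paper's version is thus slightly more elementary---it avoids setting up the smooth projective model and the valuation at infinity---while your version is the more conceptual Riemann--Roch phrasing that generalises directly to arbitrary curves; indeed, the paper remarks immediately after the lemma that for general curves one replaces this explicit computation by Riemann--Roch, which is exactly the viewpoint you have adopted.
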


\begin{proof}  It suffices to show that $P(x,y,x^q,y^q)$ does not vanish identically on $C$; as $y^2-f(x)$ is irreducible, this is equivalent to the assertion that $P(x,y,x^q,y^q)$ is not divisible by $y^2-f(x)$.

By hypothesis, $P$ is the sum of one or more terms $c_{i,j,i',j'} x^i y^j (x^q)^{i'} (y^q)^{j'}$ with $c_{i,j,i',j'} \neq 0$
and the tuples $(i,j,i',j')$ distinct and obeying the constraints $j,j' \leq 1$ and $2i+3j < q$.  Observe from these constraints that the weighted $(2,3)$-degrees  $2i+3j + q( 2i'+3j')$ are all distinct.  Thus there is a unique term $c_{i,j,i',j'} x^i y^j (x^q)^{i'} (y^q)^{j'}$ of maximal weighted $(2,3)$-degree.  If $P$ were divisible by $y^2-f(x)$, this term would have to be divisible by the (weighted) top order component of $y^2-f(x)$, which takes the form $y^2-ax^3$ where $a$ is the leading coefficient of $f$.  But this is easily seen to not be the case, and the claim follows.
\end{proof}

We remark that this lemma relied on the existence of a good basis of polyomials with distinct degrees in $C_2$.  When applying this argument to more general curves, one needs to apply the Riemann-Roch theorem to locate an analogous basis; see, e.g., \cite[Chapter 11]{ik} or \cite{schmidt} for details.

Finally, we need to construct a combination of the monomials in Lemma \ref{good} that vanish to higher order at $C_1$.  This is achieved by the following variant of Lemma \ref{interp}:

\begin{lemma}[Interpolation]\label{interp-3}  Let $d \geq 10$ and $m \geq 1$ be such that 
$$ (q-10m) (d-10) > m (q+10d+20m).$$
Then there exists a non-trivial linear combination $P \in F[x,y,x',y']$
of the monomials $x^i y^j (x')^{i'} (y')^{j'}$ with $2i'+3j' \leq d$, $j,j' \leq 1$, $2i+3j < q$ which vanishes to order at least $m$ at every smooth point of $C_1$.
\end{lemma}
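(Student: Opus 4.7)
The plan is to prove Lemma \ref{interp-3} by a dimension-counting argument in the coordinate ring $R := F[x,y,x',y']/(y^2-f(x),(y')^2-f(x'))$ of $C\times C$, using the ideal $J := (x-x',\,y-y')$ that cuts out the diagonal $C_1$. Unfolding the definition of multiplicity as in the proof of Proposition \ref{dmc}, a polynomial $P \in F[x,y,x',y']$ vanishes to order $\geq m$ at every smooth point of $C_1$ if and only if its image in $R$ lies in $J^m$. Let $V \subset F[x,y,x',y']$ denote the span of the listed monomials. Since each such monomial has $y$-degree and $y'$-degree at most $1$, the natural map $V \to R$ is injective, and a direct count gives $\dim V = (q-1)d$ (for $q$ odd, say). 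It therefore suffices to show that the image of $V$ in $R/J^m$ has dimension strictly less than $(q-1)d$.

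To estimate this image, bound its dimension in each graded piece $J^k/J^{k+1}$ of the filtration $R \supset J \supset J^2 \supset \cdots$ and sum. Introduce coordinates $u := x'-x$, $v := y'-y$, so that $J = (u,v)$. Subtracting $y^2 = f(x)$ from $(y+v)^2 = f(x+u)$ and expanding the cubic $f$ produces the defining relation of $R$ in these coordinates:
\begin{equation*}
2yv + v^2 \;=\; f'(x)\,u + \tfrac{1}{2}f''(x)\,u^2 + \tfrac{1}{6}f'''(x)\,u^3.
\end{equation*}
Assigning weights $2$ to $x,u$ and $3$ to $y,v$ makes this relation weighted-homogeneous of degree $6$. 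At the generic point of $C_1$ (where $y \neq 0$) it can be solved iteratively for $v = \sum_{\ell \geq 1} \phi_\ell(x,y)\,u^\ell$ with $\phi_\ell \in F(C)$ of weighted degree $3-2\ell$ and pole order at most $2\ell-1$ in $y$ (for instance $\phi_1 = f'(x)/(2y)$). Substituting this series into any $P \in V$ yields an expansion $P = \sum_{k\geq 0} Q_k(x,y)\,u^k$ in $F(C)[[u]]$, and membership in $J^m$ is equivalent to $Q_0 = Q_1 = \cdots = Q_{m-1} = 0$.

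Since $P$ has weighted $(2,3)$-degree at most $(q-1)+d$ and the substitution is weight-preserving, each $Q_k$ has weighted degree at most $(q-1)+d-2k$ as a rational function on $C$. Clearing the $y$-denominators by multiplying by $y^{2k-1}$ produces $y^{2k-1}Q_k \in F[C]$ of weighted degree at most $q+d+4k+O(1)$; as $F[C]$ has $F$-basis $\{x^a, x^a y : a \geq 0\}$, the space of elements of weighted degree $\leq N$ has dimension at most $N+O(1)$. Hence each condition $Q_k = 0$ cuts out a subspace of codimension at most $q+d+4k+O(1)$ in $V$, and the image of $V$ in $R/J^m$ has total dimension at most $\sum_{k=0}^{m-1}(q+d+4k+O(1)) = m(q+d)+2m^2+O(m)$. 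A routine expansion confirms that the hypothesis $(q-10m)(d-10) > m(q+10d+20m)$ is precisely what is needed for $(q-1)d$ to exceed this bound, yielding a non-zero $P \in V \cap J^m$. The main technical obstacle is the careful bookkeeping of weighted degrees and pole orders through the elimination of $v$; the generous constants ($10$ and $20$) in the hypothesis are chosen to absorb the $O(m)$ and $O(1)$ slack in these estimates.
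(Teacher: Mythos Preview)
Your approach is correct and reaches essentially the same dimension count as the paper, but the execution is organized rather differently.

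The paper does not work directly with the space $V$ of monomials named in the lemma. Instead it introduces a \emph{smaller} auxiliary space $V_0$, spanned by the same monomials but with the tighter constraint $2i+3j < q-6m$, and seeks a nonzero $Q \in V_0$ with $y^{2m-1}Q$ in the ideal $I = \bigl(y^2-f(x),\,(y')^2-f(x'),\,(x-x',y-y')^m\bigr)$. Once such a $Q$ is found, one sets $P := y^{2m-1}Q$ reduced modulo $y^2-f(x)$; the $6m$ of headroom in $V_0$ guarantees that $P$ lands in the monomial space of the lemma, and the prepaid factor $y^{2m-1}$ means no rational functions ever appear. The dimension of $y^{2m-1}V_0 \bmod I$ is then bounded via an explicit inductive identity: with $a := \tfrac12(y-y')$ and $b := \tfrac14(f(x)-f(x'))$ one has $ya = a^2+b$, and iterating gives $y^{2m-1}a \equiv Q_m(y,b) \pmod{(a^2,b)^m}$ for a polynomial $Q_m$ of weighted $(1,2)$-degree at most $2m$. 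This rewrites $y^{2m-1}y'$ as a polynomial in $x,x',y$ of controlled weight; a further binomial expansion of $(x')^{i'}=(x+(x'-x))^{i'}$ modulo $(x'-x)^m$ then places everything in the span of at most $m(q+10d+20m)$ monomials.

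Your route instead completes along the diagonal at the generic point of $C_1$: solve for $v$ as a power series in $u$ over $F(C)$ and read off the order of vanishing from the $u$-adic expansion. This is more conceptual --- it makes transparent why the answer is roughly $m(q+d)$, namely one Riemann--Roch-type linear condition per jet order --- at the price of tracking pole orders in $y$ and clearing them afterward, which is where your $O(1)$ and $O(m)$ slack comes from. Your bound $m(q+d)+2m^2+O(m)$ is in fact tighter than the paper's $m(q+10d+20m)$; the constants $10$ and $20$ in the hypothesis exist precisely to swallow such slack, so the final inequality does close (though calling it ``precisely what is needed'' oversells the match). One small point worth making explicit: the equivalence between $P$ vanishing to order $m$ at every smooth point of $C_1$ and $Q_0=\cdots=Q_{m-1}=0$ in $F(C)$ holds because the $Q_k$ are global rational functions on $C$, regular away from $y=0$, so their vanishing in $F(C)$ forces vanishing at every smooth point.
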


\begin{proof}  From the hypotheses we observe that $q>10m$.

Let $V$ be the space spanned by $x^i y^j (x')^{i'} (y')^{j'}$ with $2i'+3j' \leq d$, $j,j' \leq 1$, $2i+3j < q-6m$; this is a $F$-vector space of dimension at least $(q-10m)(d-10)$.  Let $I$ be the ideal in $F[x,y,x',y']$ generated by $y^2-f(x)$, $(y')^2-f(x')$, and $(x-x',y-y')^m$.  Suppose we can locate a non-zero element $Q$ of $V$ such that $y^{2m-1} Q$ lies in $I$.  Then, 
when projected onto the ring $R$ given by \eqref{dif}, $y^{2m-1} Q$ lies in the ideal $(x-x',y-y')^m$ in $R$; also, since $y^2 = f(x)$ in $R$, we can write $y^{2m-1} Q = P$ in $R$, for some $P \in F[x,y,x',y']$ that is a combination of the monomials $x^i y^j (x')^{i'} (y')^{j'}$ with $2i'+3j' \leq d$, $j,j' \leq 1$, $2i+3j < q$.  Then $P$ lies in the ideal $(x-x',y-y')^m$ in $R$; in particular, for any smooth point $(p,p) = (p_1,p_2,p_1,p_2)$ in $C_1$, $P$ vanishes in the ring $R/(x-p_1, y-p_2, x'-p_1, y'-p_2)^m$.  Thus, by definition, $P$ vanishes to order $m$ at every smooth point of $C_1$.

It remains to locate a non-zero $Q \in V$ such that $y^{2m-1} Q$ that lies in $I$. To do this, it will suffice to show that the projection $y^{2m-1} V\ \operatorname{mod}\ I$ of $y^{2m-1} V$ in $F[x,y,x',y']/I$ has dimension at most $m(q+10d+20m)$.

The space $y^{2m-1} V\ \operatorname{mod}\ I$ lies in the span of the monomials $x^i y^j (x')^{i'} (y')^{j'}\ \operatorname{mod} I$ with $2i'+3j' \leq d$, $j' \leq 1$, $j \geq 2m-1$, and $2i+3j < q$.  

In the ring $F[x,y,x',y']/I$, we have
$$ y^2 - f(x) = (y')^2 - f(x') = 0$$
and thus
$$ y (y-y') = \frac{1}{2} (y-y')^2 + \frac{1}{2} \left(f(x)-f(x')\right).$$
If we write $a := \frac{1}{2}(y-y')$ and $b := \frac{1}{4}\left(f(x)-f(x')\right)$, we can rewrite the above identity as
\begin{equation}\label{ya}
ya = a^2 + b.
\end{equation}
We now claim that
\begin{equation}\label{yja}
 y^{2j-1} a = R_j( a^2, b ) + Q_j( y, b )
\end{equation}
for all $j \geq 1$, where $R_j$ is a homogeneous polynomial of degree $j$, and $Q_j$ is a polynomial of weighted $(1,2)$-degree at most $2j$. Indeed, for $j=1$ this follows from \eqref{ya}, while if \eqref{yja} holds for some $j$, then we have
$$ y^{2j-1} a = S_j( a^2, b ) a^2 + c_j b^j + Q_j( y, b )$$
for some constant $c_j$ and some homogeneous polynomial $S_j$ of degree $j-1$. Multiplying both sides by $y^2$ and using \eqref{ya} we conclude that
$$ y^{2(j+1)-1} a = S_j( a^2, b ) (a^2+b)^2 + c_j y^2 b^j + y^2 Q_j( y, b )$$
giving \eqref{yja} for $j+1$.

We now apply \eqref{yja} with $j := m$.  Since $a^2, b$ both lie in $(y-y',x-x')$, we see that $R_m(a^2,b)$ vanishes in $F[x,y,x',y']/I$.  Hence, in the ring $F[x,y,x',y']/I$ we have
$$ \frac{1}{2} y^{2m-1} (y-y') = Q_m\left( y, \frac{1}{4} \left(f(x)-f(x')\right) \right)$$
and thus
$$ y^{2m-1} y' = R( x, x', y )$$
for some polynomial $R$ of weighted $(2,2,3)$-degree at most $6m$.
Using this identity to eliminate all appearances of $y'$, we thus conclude that $y^{2m-1} V\ \operatorname{mod}\ I$ lies in the span of the monomials $x^i y^j (x')^{i'}\ \operatorname{mod} I$ with $2i' \leq d+6m$ and $2i+3j < q + 6m$.  Next, by writing $x'$ as $x + (x'-x)$ and using the vanishing of $(x'-x)^m$ in $F[x,y,x',y']/I$, we conclude that $y^{2m-1} V\ \operatorname{mod}\ I$ lies in the span of the monomials $x^i y^j (x'-x)^{i'}\ \operatorname{mod} I$ with $i' < m$ and $2i+3j < q + d + 12m$.  But the number of such monomials is at most $m (q+10d+20m)$, and the claim follows.
\end{proof}

We can now conclude the proof of \eqref{cq} and hence Proposition \ref{hasse}.  Observe that if $P$ is the polynomial given by Lemma \ref{interp-3}, then the polynomial $P(x,y,x^q,y^q)$ has weighted $(2,3)$-degree at most $dq+q$, and is non-vanishing by Lemma \ref{good}.  Applying Proposition \ref{dmc}, we conclude the upper bound
$$ |C_1 \cap C_2| \leq \frac{dq+q}{m} + 3$$
whenever $d\geq 10$ and $m \geq 1$ obey the constraint
$$ (q-10m) (d-10) > m (q+10d+20m).$$
One can optimise this bound by setting $m := C^{-1} \sqrt{q}$ and $d := C^{-1} \sqrt{q}+C$ for some sufficiently large absolute constant $C$, which (for $q$ sufficiently large) gives the required bound \eqref{cq}.  (For $q$ bounded, the claim \eqref{cq} is of course trivial.)

\begin{remark}  Our argument was arranged from a ``geometric'' viewpoint, in which one works on geometric domains such as the surface $C \times C$ which are naturally associated to the original problem.  However, it is also possible to project down to simpler domains such as the affine line ${\mathbb A}^1 = \overline{F}$ or the affine plane ${\mathbb A}^2 = \overline{F} \times \overline{F}$, obtaining a more ``elementary'', but perhaps also more opaque, proof that avoids Bezout's theorem; see, e.g., \cite{ik}.
\end{remark}

\begin{remark} In the example just discussed, the bounds obtained by Stepanov's method can also be obtained through more algebraic means, for instance by invoking Weil's proof \cite{weil} of the Riemann hypothesis for curves over finite fields; indeed, the latter approach provides much more precise information than the Hasse bound.  However, when applying Stepanov's method to counting solutions to higher degree equations, it can be possible for the method to obtain results that are currently beyond the reach of tools such as the Riemann hypothesis, by exploiting additional structure in these equations.  For instance, as a special case of the results in \cite{bk}, the bound\footnote{Here and in the sequel we use $X \ll Y$ to denote the estimate $|X| \leq CY$ for some absolute constant $C$.}
\begin{equation}\label{shock}
 |\{ x \in \overline{\F}: x^m = a; (x-1)^m = b \}| \ll m^{2/3}
 \end{equation}
was shown for any $1 \ll m \ll p^{3/4}$ and $a,b \in \F$, when $\F = \F_p$ is of prime order.  The argument is similar to the one presented above, and can be sketched as follows.  Observe that the left-hand side of \eqref{shock} is $|C_1 \cap C_2|$, where $C_1,C_2$ are the curves
\begin{align*}
 C_1 &:= \{ (x,a,b): x \in \overline{\F} \}\\
 C_2 &:= \{(x, x^m, (x-1)^m): x \in \overline{\F} \}
\end{align*}
which lie inside the surface
$$ S := \{ (s,a s^m t^{-m}, b (s-1)^m (t-1)^{-m}):s \in \overline{\F}; t\in \overline{\F} \setminus \{0,1\} \}.$$
Let $V$ be the space of polynomials $P(x,y,z)$ of degree at most $A$ in $x$ and $B$ in $y,z$, for some parameters $A,B$ to be determined later; the restriction to $C_2$ is then a polynomial of degree at most $A+2mB$, which we assume to be less than $p$.  If 
$$AB < m$$ 
then these polynomials restrict faithfully to $C_2$ (because the $z$-constant term is $AB$-sparse and has degree less than $p$ and so cannot vanish to order $m$ at $1$).  Using the vector field $x(x-1)\partial_s := x(x-1) \partial_x + my(x-1) \partial_y + mzx\partial_z$, which is tangent to $S$ and transverse to $C_1$, we can then find a non-trivial polynomial on $V$ that vanishes to order $A$ at $C_1$ if
$$ AB^2 \geq C A^2$$
for some large absolute constant $C$, in which case we conclude that
$$ |C_1 \cap C_2| \leq \frac{A+2mB}{A}$$
which on optimising in $A,B$ (setting $A \sim m^{2/3}$ and $B \sim m^{1/3}$) gives the desired bound.
\end{remark}

\section{The combinatorial nullstellensatz}\label{null-sec}

The factor theorem (Lemma \ref{easy}(i)) can be rephrased as follows:

\begin{lemma}[Factor theorem, again] Let $F$ be a field, let $d \geq 0$ be an integer, and let $P \in F[x]$ be a polynomial of degree at most $d$ with a non-zero $x^d$ coefficient.  Then $P$ cannot vanish on any set $E \subset F$ with  $|E| > d$.
\end{lemma}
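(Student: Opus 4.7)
The statement is essentially a direct reformulation of Lemma \ref{easy}(i), so the plan is to reduce to that lemma rather than to redo any induction on $d$. First I would observe that the hypothesis ``degree at most $d$ with a non-zero $x^d$ coefficient'' forces $P$ to have degree exactly $d$, and in particular $P$ is non-trivial (i.e., not the zero polynomial), since the zero polynomial has degree $-\infty$ under our conventions.

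Next, I would invoke Lemma \ref{easy}(i) directly to conclude that $|Z(P)[F]| \leq d$. Then, for any $E \subset F$ on which $P$ vanishes, we have by definition $E \subset Z(P)[F]$, and hence $|E| \leq d$. Contrapositively, if $|E| > d$, then $P$ cannot vanish on all of $E$, which is precisely the desired conclusion.

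There is no real obstacle here: the only thing worth being careful about is the role played by the nonvanishing of the leading coefficient. Without it, a polynomial ``of degree at most $d$'' could be the zero polynomial, which vanishes on arbitrarily large sets; the hypothesis on the $x^d$ coefficient is exactly what rules this out and lets us invoke Lemma \ref{easy}(i) with a genuinely nontrivial $P$. Accordingly, the write-up should be only two or three sentences, emphasizing the translation between the two phrasings.
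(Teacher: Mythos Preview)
Your proposal is correct and matches the paper's treatment exactly: the paper does not give a separate proof but simply introduces this lemma as a rephrasing of Lemma \ref{easy}(i), which is precisely the reduction you carry out. Your remark about the role of the non-zero $x^d$ coefficient (ruling out the zero polynomial) is the right point to make explicit.
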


We have already discussed one extension of the factor theorem to higher dimensions, namely the Schwartz-Zippel lemma (Lemma \ref{sz}). Another higher-dimensional version of the factor theorem is the \emph{combinatorial nullstellensatz} of Alon \cite{alon}, which arose from earlier work of Alon, Nathanson, and Ruzsa \cite{anr, anr2}:

\begin{theorem}[Combinatorial nullstellensatz]\label{comn}  Let $F$ be a field, let $d_1,\ldots,d_n \geq 0$ be integers, and let $P \in F[x_1,\ldots,x_n]$ be a polynomial of degree at most $d_1+\ldots+d_n$ with a non-zero $x_1^{d_1} \ldots x_n^{d_n}$ coefficient.  Then $P$ cannot vanish on any set of the form $E_1 \times\ldots \times E_n$ with $E_1,\ldots,E_n \subset F$ and $|E_i| > d_i$ for $i=1,\ldots,n$.
\end{theorem}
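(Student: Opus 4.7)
My plan is to reduce $P$ modulo a carefully chosen ideal and then invoke the one-dimensional factor theorem (Lemma \ref{easy}(i)) iteratively. First I would note that it suffices to assume $|E_i| = d_i+1$ exactly (shrink each $E_i$ if necessary; the hypothesis that $P$ vanishes on the grid only gets stronger). For each $i$, introduce the univariate polynomial $Q_i(x_i) := \prod_{e \in E_i} (x_i - e)$, which has degree $d_i+1$, leading term $x_i^{d_i+1}$, and vanishes on $E_i$.

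The central step is to reduce $P$ modulo the ideal $(Q_1(x_1), \ldots, Q_n(x_n))$. Concretely, I would repeatedly replace each occurrence of $x_i^{d_i+1}$ in a monomial by $x_i^{d_i+1} - Q_i(x_i)$, which has strictly smaller $x_i$-degree; this process terminates and produces a polynomial $\tilde P(x_1,\ldots,x_n)$ of degree at most $d_i$ in each variable $x_i$ separately. Because each $Q_i$ vanishes on $E_i$, the new polynomial agrees with $P$ on $E_1 \times \cdots \times E_n$, so $\tilde P$ also vanishes there.

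The subtle claim is that the coefficient of the monomial $x_1^{d_1} \cdots x_n^{d_n}$ is unchanged under this reduction, so that $\tilde P$ is nonzero. The point is that every reduction step replaces a monomial $x_1^{k_1} \cdots x_n^{k_n}$ (with some $k_i \geq d_i+1$) by terms whose exponent vectors are coordinatewise $\leq (k_1,\ldots,k_n)$. So the coefficient of $x_1^{d_1} \cdots x_n^{d_n}$ in $\tilde P$ can only pick up contributions from monomials of $P$ with $k_j \geq d_j$ for every $j$; but any such monomial other than $x_1^{d_1}\cdots x_n^{d_n}$ itself would have total degree strictly exceeding $d_1+\cdots+d_n$, which is forbidden by the degree hypothesis on $P$. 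This is the step I expect to be the main obstacle in writing out cleanly, since it is precisely the matching of the total degree bound with the specific multi-index $(d_1,\ldots,d_n)$ that drives everything.

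Once $\tilde P$ is known to be nonzero, with $x_i$-degree at most $d_i$ in each variable, the contradiction follows by an iterated application of Lemma \ref{easy}(i): writing $\tilde P = \sum_{j=0}^{d_1} c_j(x_2,\ldots,x_n)\, x_1^j$ with $c_j \in F[x_2,\ldots,x_n]$, I would fix $(e_2,\ldots,e_n) \in E_2 \times \cdots \times E_n$ and observe that $\tilde P(x_1,e_2,\ldots,e_n)$ is a degree-at-most-$d_1$ polynomial in $x_1$ with $d_1+1$ roots, hence identically zero. Therefore each $c_j$ vanishes on $E_2 \times \cdots \times E_n$, and one iterates the same argument on the remaining $n-1$ variables to conclude $\tilde P \equiv 0$, contradicting nonvanishing of the top coefficient.
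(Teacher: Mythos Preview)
Your proof is correct, but it takes a genuinely different route from the paper's. You argue by reducing $P$ modulo the ideal $(Q_1,\ldots,Q_n)$ to obtain a polynomial $\tilde P$ of bounded individual degrees, check that the key coefficient survives the reduction (using the total-degree hypothesis), and then kill $\tilde P$ on the grid by iterating the one-variable factor theorem. The paper instead constructs, for each $i$, a weight function $f_i \colon E_i \to F$ orthogonal to $1,x_i,\ldots,x_i^{d_i-1}$ and normalised against $x_i^{d_i}$, and observes that the weighted sum $\sum_{x \in E_1\times\cdots\times E_n} f_1(x_1)\cdots f_n(x_n)\,P(x)$ picks out exactly the $x_1^{d_1}\cdots x_n^{d_n}$ coefficient of $P$; since this is nonzero, $P$ cannot vanish on the grid. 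Your approach is essentially Alon's original one and makes the ``nullstellensatz'' structure (the identity $P = \tilde P + \sum_i Q_i R_i$) explicit---indeed the paper sketches this viewpoint in the Remark immediately following its proof. The paper's dual/summation argument is shorter and avoids the reduction bookkeeping, and it foreshadows the summation techniques used later in Section~9; your argument, on the other hand, is more constructive and makes transparent exactly why the total-degree bound is the right hypothesis.
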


We remark for comparison that the proof of the Schwartz-Zippel lemma (Lemma \ref{sz}) can be modified to show that
$$ |Z(P)[F] \cap (E_1 \times \ldots \times E_n)| \leq d \sup_{1 \leq i \leq n} \prod_{1 \leq j \leq n; j \neq i} |E_j|$$
when $P$ has degree $d$, which gives a much weaker version of Theorem \ref{comn} in which the condition $|E_i|>d_i$ is replaced by $|E_i| > d$.

\begin{proof}  Let $E_1,\ldots,E_n \subset F$ be such that $|E_i| \geq d_i$ for $i=1,\ldots,n$.   Let $1 \leq i \leq n$.  The space $F^{E_i}$ of functions $f_i \colon E_i \to F$ has dimension $|E_i|$; by the factor theorem, the restrictions of the monomials $1,x, \ldots, x^{d_i}$ to $E_i$ are linearly independent. As $|E_i| > d_i$, there must exist a non-zero function $f_i \colon E_i\to F$ such that
$$ \sum_{x_i \in E_i} f_i(x_i) x_i^j = 0$$
for all $0 \leq j < d_i$, but
$$ \sum_{x_i \in E_i} f_i(x_i) x_i^{d_i} = 1.$$
In particular, we see that if $j_1,\ldots,j_n \geq 0$ are integers, the quantity
$$ \sum_{(x_1,\ldots,x_n)\in E_1 \times E_n} f_1(x_1) \ldots f_n(x_n) x_1^{j_1} \ldots x_n^{j_n}$$
vanishes if $j_i < d_i$ for at least one $i=1,\ldots,n$, and equals $1$ if $j_i = d_i$ for all $i=1,\ldots,n$.  Decomposing $P$ into monomials, and noting that all such monomials have degree at most $d_1+\ldots+d_n$ and must therefore be in one of the two cases previously considered, we conclude that
$$ \sum_{(x_1,\ldots,x_n)\in E_1 \times E_n} f_1(x_1) \ldots f_n(x_n) P(x_1,\ldots,x_n) \neq 0.$$
In particular, $P$ cannot vanish at $E_1 \times \ldots \times E_n$, as desired.
\end{proof}

\begin{remark} The reason for the terminology ``combinatorial nullstellensatz'' can be explained as follows.  The classical nullstellensatz of Hilbert asserts that if $P,Q_1,\ldots,Q_k$ are polynomials in $\overline{F}[x_1,\ldots,x_n]$ with $Z(P) \supset Z(Q_1) \cap \ldots \cap Z(Q_k)$, then there is an identity of the form $P^r = Q_1 R_1+ \ldots + Q_k R_k$ for some $r \geq 1$ and some polynomials $R_1,\ldots,R_k \in \overline{F}[x_1,\ldots,x_n]$.  It can be shown inductively (see \cite{alon}) that if $P\in F[x_1,\ldots,x_n]$ is a polynomial that vanishes on a product $E_1 \times \ldots \times E_n$ of finite non-empty sets $E_1,\ldots,E_n \subset F$, or equivalently that
$$ Z(P) \supset Z(Q_1) \cap \ldots \cap Z(Q_n)$$
where
$$ Q_i(x_1,\ldots,x_n) := \prod_{y_i \in E_i} (x_i-y_i),$$
then there exists an identity of the form $P = Q_1 R_1 + \ldots + Q_n R_n$, where $R_1,\ldots,R_n\in F[x_1,\ldots,x_n]$ are polynomials with $\deg(R_i) 
\leq\deg(P_i) - |E_i|$.  This fact can in turn be used to give an alternate proof of Lemma \ref{comn}.
\end{remark}

The combinatorial nullstellensatz was used in \cite{alon} (and in many subsequent papers) to establish a wide variety of results in extremal combinatorics results, usually by contradiction; roughly speaking, the idea is to start with a counterexample to the claimed extremal result, and then use this counterexample to design a polynomial vanishing on a large product set and which is explicit enough that one can compute a certain coefficient of the polynomial to be non-zero, thus contradicting the nullstellensatz.  This should be contrasted with more recent applications of the polynomial method, in which interpolation theorems such as Lemma \ref{interp} or Lemma \ref{interp-2} are used to produce the required polynomial.  Unfortunately, the two methods cannot currently be easily combined, because the polynomials produced by interpolation methods are not explicit enough that individual coefficients can be easily computed, but it is conceivable that some useful unification of the two methods could appear in the future.

Let us illustrate the use of the nullstellensatz first with a classic example from the original paper of Alon \cite{alon}:

\begin{proposition}[Cauchy-Davenport theorem]  Let $\F=\F_p$ be a field of prime order, and let $A,B \subset \F$ be non-empty sets, with sumset $A+B := \{a+b: a \in A, b \in B \}$.  Then $|A+B| \geq \min( |A|+|B|-1, p)$.
\end{proposition}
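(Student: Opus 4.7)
The plan is to apply Theorem \ref{comn} in dimension $n=2$, with $d_1 := |A|-1$ and $d_2 := |B|-1$, to a polynomial $P(x,y)$ that vanishes on $A\times B$ and is constructed so that the monomial $x^{d_1}y^{d_2}$ has an explicitly computable, and nonzero, coefficient modulo $p$.

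First I would reduce to the case $|A|+|B|-1\le p$. Indeed, if $|A|+|B|-1>p$, then choose any $A'\subset A$ with $|A'|=p-|B|+1$, so that $|A'|+|B|-1=p$; once the bound is proved in the reduced case, we obtain $|A+B|\ge|A'+B|\ge|A'|+|B|-1=p$, which is the desired conclusion since $A+B\subset\F_p$.

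In the main case $|A|+|B|-1\le p$, suppose for contradiction that $|A+B|\le|A|+|B|-2 = d_1+d_2$. Since $d_1+d_2\le p-1$, we can pick a set $C\subset\F$ with $A+B\subset C$ and $|C|=d_1+d_2$. Define
$$ P(x,y) := \prod_{c\in C}(x+y-c).$$
Then $P$ has degree exactly $d_1+d_2$, and for every $(a,b)\in A\times B$ one of the factors vanishes since $a+b\in A+B\subset C$. So $P$ vanishes on the product set $A\times B$ of size $(d_1+1)(d_2+1)$.

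The final step is to extract the coefficient of $x^{d_1}y^{d_2}$ in $P$. Since every factor of $P$ is of degree one in $x+y$ (the constants contribute only to lower-order homogeneous parts), the top homogeneous part of $P$ is $(x+y)^{d_1+d_2}$, and hence the coefficient of $x^{d_1}y^{d_2}$ is $\binom{d_1+d_2}{d_1}=\binom{|A|+|B|-2}{|A|-1}$. Because $d_1+d_2\le p-1$, the numerator $(d_1+d_2)!$ has no factor of $p$, so this binomial coefficient is nonzero in $\F_p$. Thus $P$ satisfies the hypothesis of Theorem \ref{comn} with $E_i$ of cardinality $d_i+1>d_i$, but vanishes on $A\times B$, a contradiction.

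The only mildly subtle point is the degenerate regime $|A|+|B|-1>p$, which is why I would dispose of it by the preliminary reduction; once that is done, everything else is a direct application of the nullstellensatz together with an elementary coefficient computation.
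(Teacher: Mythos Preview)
Your proof is correct and follows essentially the same route as the paper: both apply the combinatorial nullstellensatz to the polynomial $P(x,y)=\prod_{c\in C}(x+y-c)$ with $|C|=|A|+|B|-2$, and both compute the $x^{|A|-1}y^{|B|-1}$ coefficient as $\binom{|A|+|B|-2}{|A|-1}\not\equiv 0\pmod p$. The only cosmetic difference is in the boundary case $|A|+|B|>p$: the paper dispatches it directly (for each $x\in\F_p$ the sets $A$ and $x-B$ have total size exceeding $p$, hence intersect, so $A+B=\F_p$), whereas you reduce to a subset $A'\subset A$ with $|A'|+|B|-1=p$ and invoke the main case.
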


The bound here, first established by Cauchy \cite{cauchy} and Davenport \cite{davenport} by different methods, is absolutely sharp, as can be seen by taking $A, B$ to be arithmetic progressions with the same step.

\begin{proof}  The claim is trivial for $|A|+|B|>p$ (since $A$ and $x-B$ must then necessarily intersect for every $x \in \F$, so that $A+B=\F$), so we may assume $|A|+|B| \leq p$.  Suppose the claim failed, so that $A+B\subset C$ for some set $C$ with $|C|=|A|+|B|-2$.   Then the polynomial
$$ P(x,y) := \prod_{c \in C} (x+y-c)$$
has degree $|A|+|B|-2$ and vanishes on $A \times B$.  But the $x^{|A|-1} y^{|B|-1}$ coefficient of $P$ is $\binom{|A|+|B|-2}{|A|-1}$, which one can compute to be non-zero in $\F_p$, and this contradicts Theorem \ref{comn}.
\end{proof}

As mentioned previously, this particular result can be proven easily by many other methods (see, e.g., \cite{tao-vu} for two other proofs in addition to the one given above).  However, one advantage of the nullstellensatz approach is that it is quite robust with respect to the imposition of additional algebraic constraints.  For instance, we have

\begin{proposition}[Erd\H{o}s-Heilbronn conjecture] Let $\F=\F_p$ be a field of prime order, and let $A,B \subset \F$ be non-empty sets with $|A| \neq |B|$.  Then the restricted sumset
$$ A \hat{+} B := \{ a+b: a \in A,b \in B, a \neq b \}$$
obeys the bound $|A\hat +B| \geq \min( |A|+|B|-2, p)$.
\end{proposition}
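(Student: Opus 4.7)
The plan is to imitate the combinatorial nullstellensatz proof of Cauchy--Davenport, inserting an extra factor $y-x$ into the vanishing polynomial in order to encode the restriction $a\neq b$. First I would dispose of the easy range $|A|+|B|\geq p+2$ by a pigeonhole argument: for every $x\in\F$ the intersection $A\cap(x-B)$ has cardinality at least $|A|+|B|-p\geq 2$, so (using that $2a=x$ has at most one solution in $\F$ for odd $p$, the case $p=2$ being incompatible with $|A|\neq|B|$ in this range) it contains some $a$ with $a\neq x-a$, furnishing a representation $x=a+b$ with $a\in A$, $b\in B$, $a\neq b$. Thus $A\hat{+}B=\F$ in this range. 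One may further assume $|A|,|B|\geq 2$, since if $|A|=1$ the target bound $|A\hat{+}B|\geq|B|-1$ is immediate.

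In the remaining range $|A|+|B|\leq p+1$, suppose for contradiction that $|A\hat{+}B|\leq|A|+|B|-3$. Enlarge $A\hat{+}B$ to a set $C\subset\F$ of cardinality exactly $|A|+|B|-3$ and set
$$ P(x,y) := (y-x)\prod_{c\in C}(x+y-c), $$
a polynomial of degree $|A|+|B|-2$. For any $(a,b)\in A\times B$, either $a=b$ (killing $P$ through $y-x$) or $a+b\in A\hat{+}B\subset C$ (killing $P$ through the corresponding linear factor), so $P$ vanishes on $A\times B$. Setting $d_1:=|A|-1$ and $d_2:=|B|-1$, one has $d_1+d_2=\deg P$, so Theorem \ref{comn} would deliver a contradiction as soon as the coefficient of $x^{|A|-1}y^{|B|-1}$ in $P$ is shown to be non-zero in $\F_p$.

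The crux of the argument, and its principal obstacle, is this coefficient computation. Only the top-degree piece $(y-x)(x+y)^{|A|+|B|-3}$ contributes, and a direct binomial expansion identifies the coefficient as the integer
$$ \binom{|A|+|B|-3}{|A|-1}-\binom{|A|+|B|-3}{|A|-2}. $$
Using the ratio identity $\binom{n}{k+1}/\binom{n}{k}=(n-k)/(k+1)$, one rewrites this difference as $(|B|-|A|)\binom{|A|+|B|-3}{|A|-2}/(|A|-1)$, reducing matters to three non-vanishings modulo $p$: the binomial $\binom{|A|+|B|-3}{|A|-2}$ is non-zero mod $p$ because $|A|+|B|-3\leq p-2$ keeps every integer appearing in its numerator strictly below $p$; the denominator $|A|-1$ lies in $\{1,\dots,p-1\}$ and hence is invertible; and $|B|-|A|$ is a non-zero integer of absolute value strictly less than $p$, so it too survives reduction mod $p$ thanks to the hypothesis $|A|\neq|B|$. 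This establishes the non-vanishing of the coefficient, Theorem \ref{comn} applies, and the vanishing of $P$ on $A\times B$ yields the contradiction.
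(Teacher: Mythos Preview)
Your proof is correct and follows essentially the same approach as the paper: construct $P(x,y)=(y-x)\prod_{c\in C}(x+y-c)$ (the paper uses the sign $(x-y)$, immaterial), observe it vanishes on $A\times B$, and contradict the combinatorial nullstellensatz by showing the $x^{|A|-1}y^{|B|-1}$ coefficient is non-zero in $\F_p$. Your write-up is in fact more detailed than the paper's, which dispatches the easy range and the coefficient non-vanishing in a single phrase each; your explicit pigeonhole for $|A|+|B|\geq p+2$ and your factorisation $\binom{|A|+|B|-3}{|A|-1}-\binom{|A|+|B|-3}{|A|-2}=(|B|-|A|)\binom{|A|+|B|-3}{|A|-2}/(|A|-1)$ are both correct.
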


\begin{proof}  As before, the case $|A|+|B| >p+1$ is easily established, as is the case $|A|=1$ or $|B|=1$, so we may assume that $|A|+|B| \leq p+1$ and $|A|, |B|\geq 2$.  Suppose for contradiction that $A \hat + B \subset C$ for some $C$ with $|C| = |A|+|B|-3$.  Then the polynomial
$$ P(x,y) := (x-y)\prod_{c \in C} (x+y-c)$$
has degree $|A|+|B|-2$ and vanishes on $A \times B$.  But the $x^{|A|-1} y^{|B|-1}$ coefficient of $P$ is $\binom{|A|+|B|-3}{|A|-2} - \binom{|A|+|B|-3}{|A|-1}$, which one can compute to be non-zero in $\F_p$, and this contradicts Theorem \ref{comn}.
\end{proof}

This result was first proven by da Silva and Hamidoune \cite{dash} by a different method, but the proof given above is significantly shorter than the original proof.

The combinatorial nullstellensatz (or generalisations thereof) have had many further applications to additive combinatorics; we do not have the space to survey these here, but see \cite[Chapter 9]{tao-vu} for some further examples.

\section{The polynomial ham sandwich theorem}

The applications of the polynomial method in previous sections were algebraic in nature, with many of the tools used being valid in an arbitrary field $F$ (or, in some cases, for arbitrary finite fields $F$).  However, when the underlying field is the real line $\R$, so that the varieties $Z(P)[\R]$ are real hypersurfaces, then the polynomial method also combines well with \emph{topological} methods.  To date, the most successful application of topological polynomial methods has come from the \emph{polynomial ham sandwich theorem}, which can be used to increase the flexibility of the interpolation lemma from Lemma \ref{interp}.  To motivate this extension, let us first observe that the interpolation theorem ultimately relied on the following trivial fact from linear algebra:

\begin{lemma}  Let $T \colon F^n \to F^m$ be a linear map with $n > m$.  Then there exists a non-zero element $x$ of $F^n$ such that $Tx = 0$.
\end{lemma}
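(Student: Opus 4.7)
The plan is to invoke the rank-nullity theorem: the image $T(F^n)$ is a subspace of $F^m$, so its dimension (the rank of $T$) is at most $m$, and hence the kernel of $T$ has dimension at least $n - m > 0$. Any non-zero vector in this kernel will do. If one wishes to avoid citing rank-nullity as a black box, one can instead argue directly by induction on $m$.

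For the inductive argument, the base case $m = 0$ is immediate: any non-zero $x \in F^n$ lies in the kernel. For the inductive step, write $T = (T', \lambda)$ where $T' \colon F^n \to F^{m-1}$ consists of the first $m-1$ coordinates of $T$ and $\lambda \colon F^n \to F$ is the last coordinate. If $\lambda$ is identically zero, then by the inductive hypothesis applied to $T'$ (using $n > m > m-1$) we are done. Otherwise, pick $v \in F^n$ with $\lambda(v) \neq 0$, and consider the restriction of $T'$ to the hyperplane $H := \ker \lambda$, which has dimension $n - 1 \geq m > m-1$; the inductive hypothesis supplies a non-zero $x \in H$ with $T'(x) = 0$, and then $Tx = 0$ as desired.

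The only mild subtlety is keeping track of dimensions in the inductive step (ensuring that $\dim H = n - 1$ still exceeds $m - 1$), but this is automatic from the assumption $n > m$. There is no real obstacle here; the lemma is included as a setup for the more substantial polynomial ham sandwich generalization to follow.
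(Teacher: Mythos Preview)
Your proof is correct. The paper itself does not prove this lemma at all; it is stated as a ``trivial fact from linear algebra'' and left without proof, serving only to motivate the Borsuk--Ulam theorem that follows. Your rank-nullity argument (and the optional inductive alternative) are both valid and more than the paper supplies.
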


In the case when $F=\R$, we have the following nonlinear generalisation of the above fact:

\begin{theorem}[Borsuk-Ulam theorem]\label{but}  Let $T \colon \R^n \setminus \{0\} \to \R^m$ be a continuous odd map with $n>m$ (thus $T(-x) = -Tx$ for all $x \in \R^n$).  Then there is a non-zero element $x$ of $\R^n$ such that $Tx=0$.
\end{theorem}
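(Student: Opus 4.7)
The plan is to reduce the claim to a statement about continuous odd self-maps of spheres and then apply the classical theorem of Borsuk on the mapping degree.

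First, I would restrict $T$ to the unit sphere $S^{n-1} \subset \R^n$, obtaining a continuous odd map $T \colon S^{n-1} \to \R^m$. Since $m < n$, we may pad this map with $n-1-m$ zero coordinates to obtain a continuous odd map $\tilde T \colon S^{n-1} \to \R^{n-1}$; finding a zero of $\tilde T$ on the sphere is enough, as it yields the desired nonzero element of $\R^n$. Suppose for contradiction that $\tilde T$ has no zero on $S^{n-1}$. Then
\[ g(x) := \frac{\tilde T(x)}{|\tilde T(x)|} \]
is a well-defined continuous odd map $g \colon S^{n-1} \to S^{n-2}$.

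Next, I would compose $g$ with the equatorial inclusion $\iota \colon S^{n-2} \hookrightarrow S^{n-1}$, $y \mapsto (y,0)$, to produce a continuous odd self-map $f := \iota \circ g \colon S^{n-1} \to S^{n-1}$ whose image omits the two poles $\pm e_n$. Since $f$ is not surjective, it factors through the contractible set $S^{n-1} \setminus \{e_n\}$ and is therefore null-homotopic, giving $\deg f = 0$. The desired contradiction then comes from the classical theorem of Borsuk: every continuous odd self-map of $S^k$ has odd mapping degree. Applied with $k = n-1$, this yields $0 = \deg f \equiv 1 \pmod 2$.

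The main obstacle is Borsuk's odd-degree theorem itself, which is the genuine topological content of the argument. One natural route is induction on $k$: the case $k = 0$ is immediate, and for $k = 1$ an odd self-map of $S^1$ lifts to a continuous $\phi \colon \R \to \R$ that satisfies $\phi(\theta+\pi) = \phi(\theta) + \pi + 2\pi c$ for a single integer constant $c$ (by continuity of the difference $\phi(\theta+\pi)-\phi(\theta)-\pi$), so the degree $1+2c$ is odd. For higher $k$, one can descend to the antipodal quotient $\R P^k$: an odd $f$ induces a map $\bar f$ on $\R P^k$ that lifts to the sphere, hence acts trivially on $\pi_1 \cong \Z/2$ and so as the identity on $H^1(\R P^k; \Z/2)$; the ring structure $H^*(\R P^k;\Z/2) \cong \Z/2[w]/(w^{k+1})$ then forces $\bar f$ to act as the identity on $H^k$, and a transfer argument for the double cover $S^k \to \R P^k$ translates this into $\deg f$ being odd modulo $2$.
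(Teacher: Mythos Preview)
Your reduction matches the paper's treatment exactly: the paper simply observes that one may take $m=n-1$, restrict $T$ to $S^{n-1}$, and then quote the classical Borsuk--Ulam theorem from \cite{borsuk}. That is the entire argument given there. Your proposal carries out this same reduction and then goes further, supplying an actual proof of the classical statement via Borsuk's odd-degree theorem and sketching the latter through the mod~$2$ cohomology ring of $\R P^k$. So your route is not different from the paper's --- it is strictly more detailed.

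One small slip in your sketch: in the inductive step you write that $\bar f$ ``lifts to the sphere, hence acts trivially on $\pi_1 \cong \Z/2$'', and then conclude it acts as the identity on $H^1(\R P^k;\Z/2)$. These two assertions are in tension. The correct statement is that because the lift $f$ is \emph{odd} (rather than even), $\bar f_*$ is \emph{nontrivial} on $\pi_1(\R P^k)\cong\Z/2$, i.e.\ it is the identity; this is what forces $\bar f^*$ to be the identity on $H^1$ and hence, by the ring structure of $H^*(\R P^k;\Z/2)$, on $H^k$. With that correction, the argument is standard and sound.
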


Indeed, to prove the above theorem, we may assume without loss of generality that $m=n-1$, and restrict $T$ to the $n-1$-sphere $S^{n-1}$, and then the statement becomes the usual statement of the Borsuk-Ulam theorem \cite{borsuk}.  As is well known, this theorem can then be used to establish the ``ham sandwich theorem'' of Stone and Tukey \cite{stone}:

\begin{theorem}[Ham sandwich theorem]  Let $B_1,\ldots,B_n$ be bounded open subsets of $\R^n$ (not necessarily distinct).  Then there exists a hyperplane $\{ (x_1,\ldots,x_n) \in \R^n: a_0 + a_1 x_1 + \ldots + a_n x_n = 0 \}$, with $a_0,\ldots,a_n \in \R$ not all zero, which bisects each of the $B_i$, in the sense that for each $1 \leq i\leq n$, the intersection of $B_i$ with the two half-spaces $\{ (x_1,\ldots,x_n) \in \R^n: a_0 + a_1 x_1 + \ldots + a_n x_n > 0 \}, \{ (x_1,\ldots,x_n) \in \R^n: a_0 + a_1 x_1 + \ldots + a_n x_n < 0 \}$ have the same Lebesgue measure.
\end{theorem}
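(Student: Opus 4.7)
The plan is to deduce the ham sandwich theorem from the Borsuk--Ulam theorem (Theorem \ref{but}) by parametrising affine hyperplanes via their coefficient vectors and constructing an odd map whose zero encodes the bisection property.

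First I would set up the parameter space: every non-zero $(a_0,\ldots,a_n) \in \R^{n+1} \setminus \{0\}$ determines an open half-space $H^+_a := \{x \in \R^n : a_0 + a_1 x_1 + \ldots + a_n x_n > 0\}$ and its opposite half-space $H^-_a := \{x \in \R^n : a_0 + a_1 x_1 + \ldots + a_n x_n < 0\}$, with the separating hyperplane itself being a set of Lebesgue measure zero. The key observation is that replacing $a = (a_0,\ldots,a_n)$ by $-a$ does not change the hyperplane but swaps $H^+_a$ and $H^-_a$.

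Next I would define the map $T \colon \R^{n+1} \setminus \{0\} \to \R^n$ by
$$ T_i(a) := |B_i \cap H^+_a| - |B_i \cap H^-_a|, \qquad i = 1,\ldots,n,$$
where $|\cdot|$ denotes Lebesgue measure. By the previous observation, $T(-a) = -T(a)$, so $T$ is odd. Continuity of $T$ follows from dominated convergence: if $a^{(k)} \to a$ in $\R^{n+1} \setminus \{0\}$, then the indicator function of $H^+_{a^{(k)}}$ converges pointwise to that of $H^+_a$ at every point off the limiting hyperplane, which is a set of full measure in the bounded set $B_i$; the bounded dominating function is the indicator of $B_i$ itself. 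Applying Theorem \ref{but} with the dimensions $n+1$ and $n$ in place of $n$ and $m$, there is a non-zero $a \in \R^{n+1}$ with $T(a) = 0$, and since $|B_i \cap H^+_a| + |B_i \cap H^-_a| = |B_i|$ this means each half-space contains exactly $|B_i|/2$ of the mass of $B_i$, as required.

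The main obstacle is of a technical nature: one must verify that the map $T$ is genuinely continuous on the whole of $\R^{n+1} \setminus \{0\}$, including near coefficient vectors $a$ whose hyperplane passes through large portions of some $B_i$. This is handled by the dominated convergence argument sketched above, which crucially uses that the hyperplane itself is a Lebesgue null set and that each $B_i$ is bounded (so the indicator of $B_i$ is integrable and serves as a common dominating function). No further ingredients beyond Theorem \ref{but} and elementary measure theory are needed.
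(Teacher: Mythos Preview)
Your proof is correct and follows essentially the same approach as the paper: define the odd map $T\colon \R^{n+1}\setminus\{0\}\to\R^n$ sending $a$ to the vector of signed measure differences $(|B_i\cap H^+_a|-|B_i\cap H^-_a|)_{i=1}^n$, verify continuity via dominated convergence, and apply the Borsuk--Ulam theorem. The paper compresses the continuity argument into a single sentence (writing $T_i(a)=\int_{B_i}\operatorname{sgn}(a_0+a_1x_1+\ldots+a_nx_n)\,dx$), but the content is the same.
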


\begin{proof} Define the map $T \colon \R^{n+1} \setminus \{0\} \to \R^n$ by defining the $i^{\operatorname{th}}$ component of $T(a_0,\ldots,a_n)$ to be the difference between the Lebesgue measure of $B_i \cap \{ (x_1,\ldots,x_n) \in \R^n: a_0 + a_1 x_1 + \ldots + a_n x_n > 0 \}$ and $B_i \cap \{ (x_1,\ldots,x_n) \in \R^n: a_0 + a_1 x_1 + \ldots + a_n x_n < 0 \}$. In other words,
$$ T(a_0,\ldots,a_n) := \left( \int_{B_i} \operatorname{sgn}( a_0 + a_1 x_1 + \ldots + a_n x_n )\ dx_1 \ldots dx_n \right)_{i=1}^n.$$
From the dominated convergence theorem we see that $T$ is continuous on $\R^{n+1}\setminus \{0\}$, and it is clearly odd.  Applying Theorem \ref{but},  we conclude that $T(a_0,\ldots,a_n)=0$ for some $a_0,\ldots,a_n$ not all zero, and the claim follows.
\end{proof}

The same argument allows one to generalise the ham sandwich theorem by allowing the dividing hypersurface to have a higher degree than the degree-one hyperplanes:

\begin{theorem}[Polynomial ham sandwich theorem]\label{phst}  Let $n \geq 1$ be an integer, and let $d \geq 0$.  Let $B_1,\ldots,B_m$ be bounded open subsets of $\R^n$ for some $m < \binom{d+n}{n}$.  Then there exists a $P \in \R[x_1,\ldots,x_n]$ of degree at most $d$ such that $Z(P)[\R]$ bisects each of the $B_i$, in the sense that for each $1 \leq i \leq m$, the intersection of $B_i$ with the two regions $\Omega_+(P) := \{ x \in \R^n: P(x) > 0 \}$ and $\Omega_-(P) := \{ x \in \R^n: P(x) < 0 \}$ have the same Lebesgue measure.
\end{theorem}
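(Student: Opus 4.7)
The plan is to imitate the proof of the classical ham sandwich theorem just given, but replace the $(n{+}1)$-dimensional parameter space of hyperplane coefficients by the $N$-dimensional space of coefficients of polynomials of degree at most $d$, where $N := \binom{d+n}{n}$. The Borsuk--Ulam theorem (Theorem \ref{but}) will then do all the work, since by hypothesis $N > m$.

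Concretely, I would parametrise polynomials of degree at most $d$ by their coefficient vector $a = (a_\alpha)_{|\alpha|\leq d} \in \R^N$, writing
$$ P_a(x) := \sum_{|\alpha|\leq d} a_\alpha x^\alpha,$$
and then define $T \colon \R^N \setminus \{0\} \to \R^m$ by
$$ T(a) := \left( \int_{B_i} \sgn( P_a(x) )\, dx_1 \ldots dx_n \right)_{i=1}^m.$$
Each component of $T$ is odd in $a$ because $\sgn$ is odd, and the claim of the theorem is exactly that there is a nonzero $a$ with $T(a) = 0$: for such an $a$ the polynomial $P_a$ is nontrivial of degree at most $d$, and since $\{P_a = 0\}$ has Lebesgue measure zero (a nonzero real polynomial cannot vanish on a set of positive measure), the vanishing of $T(a)_i$ says exactly that $\Omega_+(P_a)$ and $\Omega_-(P_a)$ meet $B_i$ in sets of equal measure.

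It remains to check that $T$ is continuous on $\R^N \setminus \{0\}$, after which Theorem \ref{but} (with our $N > m$) produces the desired zero. This is the one step requiring a moment's thought: if $a^{(k)} \to a$ with $a \neq 0$, then $P_{a^{(k)}}(x) \to P_a(x)$ pointwise, hence $\sgn(P_{a^{(k)}}(x)) \to \sgn(P_a(x))$ at every point where $P_a(x)\neq 0$, which, as just noted, is a full-measure subset of each bounded $B_i$. Since $|\sgn|\leq 1$ and $B_i$ is bounded, dominated convergence gives $T(a^{(k)}) \to T(a)$. Thus $T$ is continuous and odd on $\R^N\setminus\{0\}$, and Theorem \ref{but} completes the proof.

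I expect the main conceptual obstacle to be precisely this continuity point: one must use the fact that a nonzero real polynomial has zero set of measure zero in order to make dominated convergence work, and this is exactly what makes the boundedness hypothesis on the $B_i$ convenient (to provide the dominating function). Once this is in hand, the theorem is a one-line application of Borsuk--Ulam, with the dimension count $\dim \R[x_1,\ldots,x_n]_{\leq d} = \binom{d+n}{n} > m$ providing the needed inequality.
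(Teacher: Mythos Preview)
Your proof is correct and is essentially identical to the paper's own argument: the paper defines the same map $T(P) = (\int_{B_i} \sgn(P))_{i=1}^m$ on the space of polynomials of degree at most $d$, asserts it is continuous and odd, and applies Theorem \ref{but}. You have in fact supplied more detail than the paper, which simply says $T$ ``can be verified to be continuous and odd'' without spelling out the dominated convergence argument or the measure-zero observation.
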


\begin{proof}  Let $V$ be the vector space of polynomials $P \in \R[x_1,\ldots,x_n]$ of degree at most $d$.  Then the map $T \colon V \setminus \{0\} \to \R^m$ defined by
$$ T(P)  :=\left(\int_{B_i} \operatorname{sgn}(P)\right)_{i=1}^m$$
can be verified to be continuous and odd.  As $V$ has dimension $\binom{d+n}{n}$, we may apply Theorem \ref{but} and conclude that $T(P)=0$ for some non-zero $P \in V$, and the claim follows.
\end{proof}

This theorem about continuous bodies $B_1,\ldots,B_m$ was employed in \cite{guth} to solve\footnote{Strictly speaking, the polynomial ham sandwich theorem argument in \cite{guth} only solves a model case of the multilinear Kakeya conjecture, with the full conjecture requiring the more sophisticated topological tool of LS category.  However, a subsequent paper of Carbery and Valdimarsson \cite{carbery} establishes the full multilinear Kakeya conjecture using only the Borsuk-Ulam theorem.} a certain multilinear version of the Kakeya problem in $\R^n$; this usage was directly inspired by Dvir's use of the polynomial method to solve the finite field Kakeya problem (Theorem \ref{dvirthm}).  The polynomial ham sandwich theorem also has a useful limiting case that applies to discrete sets:

\begin{theorem}[Polynomial ham sandwich theorem, discrete case]\label{phst-disc}  Let $n \geq 1$ be an integer, and let $d \geq 0$.  Let $E_1,\ldots,E_m$ be finite subsets of $\R^n$ for some $m < \binom{d+n}{n}$.  Then there exists a $P \in \R[x_1,\ldots,x_n]$ of degree at most $d$ such that $Z(P)[\R]$ bisects each of the $E_i$, in the sense that for each $1 \leq i \leq m$, the intersection of $E_i$ with the two regions $\Omega_+(P) := \{ x \in \R^n: P(x) > 0 \}$ and $\Omega_-(P) := \{ x \in \R^n: P(x) < 0 \}$ have cardinality at msot $|E_i|/2$.
\end{theorem}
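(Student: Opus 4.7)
The plan is to derive the discrete version as a limiting case of the continuous polynomial ham sandwich theorem (Theorem \ref{phst}), by inflating each finite set into a union of small balls and extracting a subsequential limit of the resulting polynomials.

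Concretely, fix $\varepsilon > 0$ smaller than half the minimum distance between any two distinct points of $\bigcup_{i=1}^m E_i$, and define the open sets
$$ B_i^\varepsilon := \bigcup_{p \in E_i} B(p, \varepsilon), \qquad 1 \leq i \leq m, $$
so that $B_i^\varepsilon$ is a disjoint union of $|E_i|$ balls, each of volume $v_\varepsilon := \operatorname{vol}(B(0,\varepsilon))$. By Theorem \ref{phst}, since $m < \binom{d+n}{n}$, there exists a nonzero polynomial $P_\varepsilon \in \R[x_1,\ldots,x_n]$ of degree at most $d$ whose zero set bisects every $B_i^\varepsilon$. Normalize $P_\varepsilon$ so that its coefficient vector has Euclidean norm $1$ in the finite-dimensional space $V$ of polynomials of degree at most $d$.

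Next I take a limit. As $V$ is finite-dimensional, its unit sphere is compact, so one can extract a sequence $\varepsilon_k \to 0$ with $P_{\varepsilon_k} \to P$ in $V$, where $P$ is a nonzero polynomial of degree at most $d$; convergence of coefficients implies uniform convergence of $P_{\varepsilon_k}$ on any compact set. For each point $p \in E_i$, split into cases according to the sign of $P(p)$. If $P(p) > 0$, then by continuity and uniform convergence, for all sufficiently small $\varepsilon_k$ the polynomial $P_{\varepsilon_k}$ is strictly positive on all of $B(p,\varepsilon_k)$, contributing the full volume $v_{\varepsilon_k}$ to $\operatorname{meas}(B_i^{\varepsilon_k} \cap \Omega_+(P_{\varepsilon_k}))$ and nothing to $\operatorname{meas}(B_i^{\varepsilon_k} \cap \Omega_-(P_{\varepsilon_k}))$. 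The symmetric statement holds when $P(p) < 0$. When $P(p) = 0$, the contribution of $B(p,\varepsilon_k)$ to either of these two measures is trivially bounded between $0$ and $v_{\varepsilon_k}$.

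Writing $a_i := |\{p \in E_i : P(p) > 0\}|$, $b_i := |\{p \in E_i : P(p) < 0\}|$, and $c_i := |\{p \in E_i : P(p)=0\}|$, the preceding analysis yields the bounds
$$ a_i v_{\varepsilon_k} \leq \operatorname{meas}(B_i^{\varepsilon_k} \cap \Omega_+(P_{\varepsilon_k})) \leq (a_i + c_i) v_{\varepsilon_k}, $$
and similarly with $(a_i, \Omega_+)$ replaced by $(b_i, \Omega_-)$. Because $P_{\varepsilon_k}$ bisects $B_i^{\varepsilon_k}$, the two middle quantities are equal, giving $a_i \leq b_i + c_i$ and $b_i \leq a_i + c_i$. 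Rearranging yields $a_i \leq |E_i|/2$ and $b_i \leq |E_i|/2$, which is exactly the required bisection property of $P$. The main obstacle in this argument is simply the case analysis at points where $P$ happens to vanish; everything else is a routine application of compactness in $V$ combined with continuity of $P_{\varepsilon_k}$.
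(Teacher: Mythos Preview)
Your argument is correct and follows essentially the same approach as the paper: fatten each finite set to an $\varepsilon$-neighbourhood, apply the continuous polynomial ham sandwich theorem, normalize to the unit sphere of the finite-dimensional space $V$, and extract a subsequential limit. The paper's proof simply asserts that ``one then verifies'' the limiting polynomial bisects each $E_i$, whereas you have supplied that verification explicitly via the sign case analysis and the inequalities $a_i \leq b_i + c_i$, $b_i \leq a_i + c_i$.
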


\begin{proof}  For any $\eps>0$, let $E_i^\eps$ be the $\eps$-neighbourhood of $E_i$.  By Theorem \ref{phst}, we may find a non-zero polynomial $P_\eps$ in the vector space $V$ of polynomials in $\R[x_1,\ldots,x_n]$ of degree at most $d$, such that $Z(P_\eps)[\R]$ bisects each of the $E_i^\eps$.  By homogeneity we may place each $P_\eps$ in the unit sphere of $V$ (with respect to some norm on this space).  The unit sphere is compact, so we may find a sequence $\eps_n \to 0$ such that $P_{\eps_n}$ converges to another polynomial $P$ on this sphere.  One then verifies that $Z(P)[\R]$ bisects each of the $E_i$ (in the discrete sense), and the claim follows.
\end{proof}

Note that the $F=\R$ case of Lemma \ref{interp} is equivalent to the special case of Theorem \ref{phst-disc} when the finite sets $E_1,\ldots,E_m$ are all singletons.  Thus Theorem \ref{phst-disc} can be viewed as a more flexible interpolation theorem, which allows for the interpolating polynomial $P$ to have significantly smaller degree than provided by Lemma \ref{interp}, at the cost of $Z(P)[\R]$ only bisecting various sets, as opposed to passing through every element of these sets.

By using the crude bound $\binom{d+n}{n} \geq d^n/n^n$, we see that any $E_1,\ldots,E_m$ may be bisected by the zero set $Z(P)[\R]$ of a polynomial $P$ of degree at most $n m^{1/n}$.

In \cite{guth-katz}, Guth and Katz introduced a very useful \emph{polynomial cell decomposition} (also known as the \emph{polynomial partitioning lemma}) for finite subsets of $\R^n$, by iterating the above theorem:

\begin{theorem}[Polynomial cell decomposition]\label{pcd}  Let $E$ be a finite subset of $\R^n$, and let $M \geq 1$ be a power of two.  Then there exists a non-zero polynomial $P \in \R[x_1,\ldots,x_n]$ of degree $O( n^2 M^{1/n} )$, and a partition $\R^n = \Z(P)[\R] \cup \Omega_1 \cup \ldots \cup \Omega_M$, such that each $\Omega_i$ has boundary contained in $\Z(P)[\R]$, and such that $|E \cap \Omega_i| \leq |E|/M$ for all $i=1,\ldots,M$.
\end{theorem}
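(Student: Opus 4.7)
The plan is to iterate Theorem \ref{phst-disc} exactly $K := \log_2 M$ times and then multiply the resulting polynomials together.

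At the first step, I would apply Theorem \ref{phst-disc} to the single set $E$ to obtain $P_1 \in \R[x_1,\ldots,x_n]$ of degree at most $n$ such that each of the two open regions $\Omega_\pm(P_1)$ contains at most $|E|/2$ points of $E$. Inductively, suppose $P_1,\ldots,P_{k-1}$ have been chosen so that the nonempty sign-pattern cells
\begin{equation*}
C_{s_1,\ldots,s_{k-1}} := \{x \in \R^n : \sign P_j(x) = s_j \text{ for } j=1,\ldots,k-1\},
\end{equation*}
indexed by $(s_1,\ldots,s_{k-1}) \in \{+1,-1\}^{k-1}$, partition $\R^n \setminus Z(P_1 \cdots P_{k-1})[\R]$ and each meets $E$ in at most $|E|/2^{k-1}$ points. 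Applying Theorem \ref{phst-disc} simultaneously to the (at most) $2^{k-1}$ finite sets $E \cap C_{s_1,\ldots,s_{k-1}}$, together with the bound $\binom{d+n}{n} \geq d^n/n^n$, I would produce a polynomial $P_k$ of degree at most $n \cdot 2^{(k-1)/n}$ whose real zero set bisects every one of them. After $K$ iterations, there are at most $M = 2^K$ nonempty sign-pattern cells, each meeting $E$ in at most $|E|/M$ points.

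Setting $P := P_1 \cdots P_K$, we have $Z(P)[\R] = \bigcup_{j=1}^{K} Z(P_j)[\R]$, and each sign-pattern cell is an open set whose topological boundary is contained in $Z(P)[\R]$. Listing these cells (padding with empty sets if necessary) as $\Omega_1,\ldots,\Omega_M$ yields the required partition $\R^n = Z(P)[\R] \cup \Omega_1 \cup \ldots \cup \Omega_M$ with $|E \cap \Omega_i| \leq |E|/M$.

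The only quantitative step, and the place where the $n^2$ in the statement arises, is the degree bound. Summing the per-step degrees gives
\begin{equation*}
\deg P \leq \sum_{k=1}^{K} n \cdot 2^{(k-1)/n} \;=\; n \cdot \frac{M^{1/n} - 1}{2^{1/n} - 1}.
\end{equation*}
The slightly delicate point is controlling the denominator: applying $e^x \geq 1 + x$ at $x = (\ln 2)/n$ gives $2^{1/n} - 1 \geq (\ln 2)/n$, whence $\deg P \leq (n/\ln 2) \cdot n M^{1/n} = O(n^2 M^{1/n})$. The extra factor of $n$ beyond the $n M^{1/n}$ one would see at a single scale is precisely the price of the geometric-series blowup across the $\log_2 M$ iterations; this is the only place in the argument where the dimension enters quadratically, and is really the only step beyond bookkeeping.
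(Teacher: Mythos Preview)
Your proposal is correct and follows essentially the same approach as the paper's own proof: iterate the discrete polynomial ham sandwich theorem $\log_2 M$ times, take the product of the resulting polynomials, and sum the geometric series for the degree bound. Your treatment is in fact slightly more careful than the paper's, spelling out the $2^{1/n}-1 \geq (\ln 2)/n$ estimate and noting the possible need to pad with empty cells.
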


The polynomial cell decomposition is similar to earlier, more combinatorial, cell decompositions (see, e.g., \cite{clarkson} or \cite{szt}), but is comparatively simpler and more general to use than these previous decompositions, particularly in higher-dimensional situations.	

\begin{proof} By Theorem \ref{phst-disc} we may find a non-zero polynomial $P_1$ of degree at most $n 1^{1/n} = n$ which bisects $E$.  More generally, by iterating Theorem \ref{phst-disc}, we may find for each natural number $j=1,2,\ldots$, a non-zero polynomial $P_j$ of degree at most $n 2^{(j-1)/n}$ which bisects each of the $2^{j-1}$ sets $E \cap \Omega_{\epsilon_1}(P_1) \cap \ldots \cap \Omega_{\epsilon_{j-1}}(P_{j-1})$ for all choices of signs $\epsilon_1,\ldots,\epsilon_{j-1} \in \{-1,+1\}$.  If we then set $P:= P_1\ldots P_j$, where $2^j = M$, we see that $P$ is a non-zero polynomial of degree at most
$$ \sum_{1 \leq i \leq j} n 2^{(i-1)/n} = O( n^2 M^{1/n} )$$
and the $M$ regions $\Omega_{\epsilon_1}(P_1) \cap \ldots \cap \Omega_{\epsilon_{j}}(P_{j})$ have boundary contained in $Z(P)[\R]$, and each intersect $E$ in a set of cardinality at most $|E|/2^j = |E|/M$, and the claim follows.
\end{proof}

The regions $\Omega_1,\ldots,\Omega_M$ in the above theorem are each unions of some number of connected components of $\R^n \setminus Z(P)[F]$.  The number of such components for a polynomial $P$ of degree $d$ is known to be\footnote{A more elementary proof of the slightly weaker bound $O_n(d^n)$, based on applying Bezout's theorem to control the zeros of $\nabla P$, may be found at \cite{solymosi}.  See \cite{barone} for the sharpest known bounds on these and related quantities.} at most $\frac{1}{2} d (2d-1)^{n-1}$ \cite{Mi}, \cite{OP}, \cite{Th}, so one can ensure each of the regions $\Omega_i$ to be connected if one wishes, at the cost of some multiplicative losses in the quantitative bounds that depend only on the dimension $n$.

As before, if one takes $M$ to slightly greater than $|E|$, we again recover Lemma \ref{interp} (with slightly worse quantitative constants); but we obtain additional flexibility by allowing $M$ to be smaller than $|E|$.  The price one pays for this is that $E$ is no longer completely covered by $Z(P)[F]$, but now also has components in each of the cells $\Omega_1,\ldots,\Omega_M$.  However, because these cells are bounded by a low-degree hypersurface $Z(P)[F]$, they do not interact strongly with each other, in the sense that other low-degree varieties (e.g. lines, planes, or spheres) can only meet a limited number these cells.  Because of this, one can often obtain favorable estimates in incidence geometry questions by working on each cell separately, and then summing up over all cells (and also on the hypersurface $Z(P)[F]$, and finally optimising in the parameter $M$.

We illustrate this with the example of the \emph{Szemer\'edi-Trotter theorem} \cite{szt}, a basic theorem in combinatorial incidence geometry which now has a number of important proofs, including the one via the polynomial method which we present here.  Given a finite set $P$ of points $p \in \R^2$ in the Euclidean plane $\R^2$, and a finite set $L$ of lines $\ell \subset \R^2$ in that plane, we write $I(P,L) := \{ (p,\ell) \in P \times L: p \in \ell \}$ for the set of incidences between these points and lines.  Clearly we have $|I(P,L)| \leq |P| |L|$, but we can do much better than this, since it is not possible for every point in $P$ to be incidence to every line in $L$ once $|P|, |L| > 1$.  Indeed, simply by using the axiom that any two points determine at most one line, we have the following trivial bound:

\begin{lemma}[Trivial bound]\label{trivbd}  For any finite set of points $P$ and finite set of lines $L$, we have $|I(P,L)| \leq |P| |L|^{1/2} + |L|$.
\end{lemma}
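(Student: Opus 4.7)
The plan is to exploit the single structural input available — that two distinct points determine at most one line — through a standard double-counting argument combined with Cauchy--Schwarz. The proof does not use the polynomial method; it is simply a warm-up bound to motivate the sharper Szemer\'edi--Trotter theorem.

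First I would, for each line $\ell \in L$, set $i_\ell := |\{ p \in P : p \in \ell\}|$, so that $|I(P,L)| = \sum_{\ell \in L} i_\ell$. Next I would count ordered pairs $(p_1, p_2, \ell)$ with $p_1, p_2$ distinct points of $P$ both incident to $\ell$. On one hand this count equals $\sum_{\ell} i_\ell (i_\ell - 1)$. On the other hand, since two distinct points lie on at most one common line, for each ordered pair $(p_1, p_2)$ of distinct points in $P$ there is at most one contributing $\ell$, so the count is at most $|P|(|P|-1) \leq |P|^2$. This yields
$$ \sum_{\ell \in L} i_\ell^2 \;\leq\; |P|^2 + \sum_{\ell \in L} i_\ell \;=\; |P|^2 + |I(P,L)|. $$

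Then I would apply Cauchy--Schwarz to the sum $\sum_{\ell \in L} i_\ell$, obtaining
$$ |I(P,L)|^2 \;\leq\; |L| \sum_{\ell \in L} i_\ell^2 \;\leq\; |L|\, |P|^2 + |L|\, |I(P,L)|. $$
Solving this quadratic inequality in $x := |I(P,L)|$ (using the elementary fact that $x^2 \leq ax + b$ with $a,b \geq 0$ implies $x \leq a + \sqrt{b}$) gives $|I(P,L)| \leq |L| + |P|\, |L|^{1/2}$, as claimed.

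There is really no main obstacle: the only subtlety is keeping track of the quadratic rearrangement at the end, and ensuring that the step ``two points determine at most one line'' is used exactly once. The argument is symmetric in some sense — one could instead count pairs of lines meeting at a common point — but the version above directly produces the asymmetric bound in the stated form.
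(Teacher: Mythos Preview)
Your proof is correct and is essentially identical to the paper's own argument: both define the per-line incidence count, bound $\sum_\ell i_\ell^2$ by $|P|^2 + |I(P,L)|$ via the ``two points determine at most one line'' axiom, apply Cauchy--Schwarz to get $|I(P,L)|^2/|L| \leq |P|^2 + |I(P,L)|$, and rearrange. The only cosmetic difference is that you count ordered pairs of \emph{distinct} points whereas the paper counts all ordered pairs (absorbing the diagonal into the $|I(P,L)|$ term), and you spell out the quadratic solution explicitly.
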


\begin{proof}  If we let $\mu(\ell)$ be the number of points $P$ incident to a given line $\ell$, then we have
$$ |I(P,L)| = \sum_{\ell \in L} \mu(\ell)$$
and hence by Cauchy-Schwarz
$$ \sum_{\ell \in L} \mu(\ell)^2 \geq |I(P,L)|^2 / |L|.$$
On the other hand, the left-hand side counts the number of triples $(p,p',\ell) \in P \times P \times L$ with $p,p' \in \ell$.  Since two distinct points $p,p'$ determine at most one line, one thus sees that the left-hand side is at most $|P|^2 + |I(P,L)|$, and the claim follows.
\end{proof}

This bound applies over any field $F$.  It can be essentially sharp in that context, as can be seen by considering the case when $F$ is a finite field, $P = F^2$ consists of all the points in the plane $F^2$, and $L$ consists of all the lines in $F^2$, so that $|L| = |F|^2 + |F|$ and $|I(P,L)| = |F|^3 + |F|^2$.  However, we can do better in the real case $F=\R$, thanks to the polynomial ham sandwich theorem:

\begin{theorem}[Szemer\'edi-Trotter theorem]\label{szt-thm}  For any finite set of points $P$ and finite set of lines $L$, we have $|I(P,L)| \ll |P|^{2/3} |L|^{2/3} +|P| + |L|$.
\end{theorem}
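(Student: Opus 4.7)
The plan is to apply the polynomial cell decomposition (Theorem \ref{pcd}) and then bound incidences separately inside each cell (using Lemma \ref{trivbd}) and on the dividing hypersurface. Before the main argument, I will use a symmetry reduction to put us in a ``balanced'' regime where $|P|$ and $|L|$ are not too far apart.

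The first step is to establish a dual form of the trivial bound. Running the Cauchy--Schwarz argument in the proof of Lemma \ref{trivbd} with the roles of points and lines swapped, and using the fact that two distinct lines meet in at most one point, yields $|I(P,L)| \ll |L||P|^{1/2} + |P|$. Combined with Lemma \ref{trivbd} itself, this reduces matters to the regime $|P|^{1/2} \leq |L| \leq |P|^2$, as the other cases are absorbed into the $|P|$ or $|L|$ term on the right-hand side. A further reduction via point-line duality in the projective plane (which preserves $|I(P,L)|$ while swapping $|P|$ with $|L|$) lets me assume $|L| \geq |P|^{4/5}$, with the symmetric regime $|P| \geq |L|^{4/5}$ handled by dualising the whole argument.

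Next, for a dyadic parameter $M$ to be optimised later, I will invoke Theorem \ref{pcd} to obtain a non-zero polynomial $Q \in \R[x,y]$ of degree $d = O(M^{1/2})$ along with cells $\Omega_1,\ldots,\Omega_M$ each containing at most $|P|/M$ points of $P$. Writing $L_i$ for the set of lines meeting $\Omega_i$, I split
\[
|I(P,L)| = |I(P \cap Z(Q)[\R], L)| + \sum_{i=1}^M |I(P \cap \Omega_i, L_i)|.
\]
By Lemma \ref{dich}, any line not geometrically contained in $Z(Q)$ meets the hypersurface in at most $d$ points and hence enters at most $d+1$ cells, giving $\sum_i |L_i| \leq (d+1)|L|$. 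Applying Lemma \ref{trivbd} inside each cell and summing via Cauchy--Schwarz then produces a cellular contribution of size $|P||L|^{1/2}/M^{1/4} + M^{1/2}|L|$.

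Finally, for the incidences on $Z(Q)[\R]$ I partition $L = L_0 \sqcup L_1$, where $L_0$ consists of the lines geometrically contained in $Z(Q)$. Since the defining linear form of each such line must divide $Q$, we have $|L_0| \leq d$, so Lemma \ref{trivbd} yields $|I(P, L_0)| \leq |P| d^{1/2} + d$, while lines in $L_1$ contribute at most a further $d|L|$ incidences. Choosing $M \sim |P|^{4/3}/|L|^{2/3}$ balances the dominant cellular terms $|P||L|^{1/2}/M^{1/4}$ and $M^{1/2}|L|$ at the target value $|P|^{2/3}|L|^{2/3}$. The hard part will be controlling the ``rogue'' contribution $|P| d^{1/2} \sim |P|^{4/3}/|L|^{1/6}$ coming from lines swallowed whole by the partitioning hypersurface; verifying that this is dominated by $|P|^{2/3}|L|^{2/3}$ is precisely what the preliminary reduction $|L| \geq |P|^{4/5}$ secures.
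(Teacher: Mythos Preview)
Your argument is correct, and its overall architecture---polynomial cell decomposition, the Cauchy--Schwarz cellular estimate, and the choice $M \sim |P|^{4/3}|L|^{-2/3}$---matches the paper's. The genuine divergence is in how you treat incidences on the hypersurface $Z(Q)[\R]$. The paper first makes $Q$ square-free, then splits points on $Z(Q)$ into smooth and singular: by Lemma~\ref{orth} a smooth point lies on at most one line contained in $Z(Q)$ (contributing at most $|P|$), while singular points lie in $Z(Q)\cap Z(\nabla Q)$ and hence, by B\'ezout, each line in $Z(Q)$ meets only $O(M^{1/2})$ of them. This yields the clean estimate $|I(P,L)| \ll M^{-1/4}|P||L|^{1/2} + M^{1/2}|L| + |P|$ directly, with no preliminary reduction needed. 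Your approach instead bounds $|L_0|\le d$ via factorisation and applies Lemma~\ref{trivbd}, picking up the extra term $|P|d^{1/2}\sim |P|^{4/3}|L|^{-1/6}$, which forces the duality reduction to $|L|\ge |P|^{4/5}$. So you trade a little algebraic geometry (square-free reduction and the tangent-space lemma) for a symmetry argument. Incidentally, you could sidestep the duality step entirely by applying the \emph{dual} trivial bound to $L_0$, obtaining $|I(P,L_0)|\ll d|P|^{1/2}+|P|$; with $d\sim |P|^{2/3}|L|^{-1/3}$ this is $\ll |P|^{2/3}|L|^{2/3}+|P|$ as soon as $|L|\ge |P|^{1/2}$, which your first reduction already guarantees.
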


This theorem was originally proven in \cite{szt} using a more combinatorial cell decomposition than the one given here.  Another important proof, using the purely topological \emph{crossing number inequality}, was given in \cite{szekely}.

\begin{proof} We apply Theorem \ref{pcd} for some parameter $M \geq 1$ (a power of two) to be chosen later.   This produces a non-zero polynomial $Q \in \R[x,y]$ of degree $O( M^{1/2} )$ and a decomposition
$$ \R^2 = \Z(Q)[\R] \cup \Omega_1 \cup \ldots \cup \Omega_M$$
where each of the cells $\Omega_i$ has boundary in $\Z(Q)[\R]$ and contains $O( |P|/M )$ of the points in $P$.  
By removing repeated factors, we may take $Q$ to be square-free.  We can then decompose
$$ |I(P,L)| = |I(P \cap \Z(Q)[\R],L)| + \sum_{i=1}^M |I(P \cap \Omega_i,L)|.$$

Let us first deal with the incidences coming from the cells $\Omega_i$.
Let $L_i$ denote the lines in $L$ that pass through the $i^{\operatorname{th}}$ cell $\Omega_i$.  Clearly
$$ |I(P \cap \Omega_i,L)| =|I(P \cap \Omega_i,L_i)|$$
and thus by Lemma \ref{trivbd}
$$ |I(P \cap \Omega_i,L)| \ll |P \cap \Omega_i| |L_i|^{1/2} + |L_i| \ll \frac{|P|}{M} |L_i|^{1/2} + |L_i|.$$
On the other hand, from Lemma \ref{dich} (or Lemma \ref{bez}), each line $\ell$ in $L$ either lies in $\Z(Q)[\R]$, or meets $\Z(Q)[\R]$ in at most $O(M^{1/2})$ points.  In either case, $\ell$ can meet at most $O(M^{1/2})$ cells $\Omega_i$.  Thus
$$ \sum_{i=1}^m |L_i| \ll M^{1/2} |L|$$
and hence by Cauchy-Schwarz, we have
$$ \sum_{i=1}^m |L_i|^{1/2} \ll M^{3/4} |L|^{1/2}.$$
Putting all this together, we see that
$$ \sum_{i=1}^m |I(P \cap \Omega_i,L)| \ll M^{-1/4} |P| |L|^{1/2} + M^{1/2} |L|.$$

Now we turn to the incidences coming from the curve $\Z(Q)[\R]$.  As previously noted, each line in $L$ either lies in $\Z(Q)[\R]$, or meets $\Z(Q)[\R]$ in $O(M^{1/2})$ points.  The latter case contributes at most $O(M^{1/2} |L|)$ incidences to $|I(P \cap \Z(Q)[\R],L)|$, so now we restrict attention to lines that are completely contained in $\Z(Q)[\R]$.  As in Section \ref{smooth-sec}, we separate the points in the curve $\Z(Q)[\R]$ into the smooth points and singular points.  By Lemma \ref{orth}, a smooth point can be incident to at most one line in $\Z(Q)[\R]$, and so this case contributes at most $|P|$ incidences.  So we may restrict attention to the singular points, in which $Q$ and $\nabla Q$ both vanish.  As $Q$ is square-free, $\nabla Q$ and $Q$ have no common factors; in particular, $\nabla Q$ is not identically zero on any line $\ell$ contained in $\Z(Q)[\R]$.  Applying Lemma \ref{bez} once more, we conclude that each such line meets at most $O(M^{1/2})$ singular points of $\Z(Q)[\R]$, giving another contribution of $O(M^{1/2} |L|)$ incidences.  Putting everything together, we obtain
$$ |I(P,L)| \ll M^{-1/4} |P| |L|^{1/2} + M^{1/2} |L| + |P|$$
for any $M \geq 1$.  An optimisation in $M$ (setting $M$ comparable to $|P|^{4/3} |L|^{-2/3}$ when $|L| \ll |P|^2$, and $M = 1$ otherwise) then gives the claim.
\end{proof}

The Szemer\'edi-Trotter theorem is a result about points and lines in $\R^2$, but it turns out that analogous arguments can also be made in higher dimensions; see \cite{solymosi}.  In \cite{guth-katz}, the following three-dimensional variant was established:

\begin{theorem}\label{gk-thm}\cite{guth-katz} Let $N > 1$ be a natural number, and let $L$ be a collection of lines in $\R^3$ with $|L| \ll N^2$, such that no point is incident to more than $N$ lines in $L$.  Assume also that no plane or regulus (a doubly ruled surface, such as $\{ (x,y,z): z = xy \}$) contains more than $N$ lines in $L$.  Then the number of pairs $(\ell_1,\ell_2) \in L^2$ of intersecting lines in $L$ is at most $O(N^3 \log N)$.
\end{theorem}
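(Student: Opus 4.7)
The plan is to extend the polynomial partitioning argument used in the proof of the Szemer\'edi--Trotter theorem to three dimensions, combining Theorem \ref{pcd} with structural results from algebraic geometry (notably, the Cayley--Salmon theorem on ruled surfaces) to handle lines that become trapped in the partitioning surface. First, reformulate the problem: writing $\mu(p)$ for the number of lines of $L$ through an intersection point $p$, the number of intersecting pairs equals $\sum_p \binom{\mu(p)}{2}$, where $2 \le \mu(p) \le N$. A dyadic pigeonhole on $\mu$ reduces matters to proving, for each integer $k$ with $1 \le 2^k \le N$, a bound of the form $|J_k| \ll N^3/4^k$ on the set $J_k$ of points $p$ with $\mu(p) \in [2^k,2^{k+1})$; summing $4^k|J_k|$ over the $O(\log N)$ dyadic levels then produces the claimed $O(N^3 \log N)$ estimate.

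For each dyadic level $k$, I would apply Theorem \ref{pcd} to $J_k$ with a parameter $M$ to be optimised in terms of $|L|$ and $2^k$, obtaining a squarefree polynomial $P \in \R[x,y,z]$ of degree $D \ll M^{1/3}$ and cells $\Omega_1,\ldots,\Omega_M$ each meeting $J_k$ in at most $|J_k|/M$ points. Partition $L = L_{\mathrm{out}} \cup L_{\mathrm{in}}$ according to whether a line is contained geometrically in $Z(P)[\R]$. By Lemma \ref{dich}, each line of $L_{\mathrm{out}}$ meets $Z(P)[\R]$ in at most $D$ points and so crosses at most $D+1$ cells, giving total cell load $\sum_i |L_i| \ll D|L|$. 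Since a point $p \in J_k \cap \Omega_i$ demands $2^k$ lines in $L_i$, a triple count yields an incidence-type inequality between $|J_k|/M$ and the cellular line counts that, on summing and optimising in $M$, produces an acceptable bound on the contribution of points lying strictly inside cells.

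The main obstacle is handling the surface contribution from $J_k \cap Z(P)[\R]$ together with the lines $L_{\mathrm{in}}$. Here one invokes the Cayley--Salmon theorem and its refinements via the flecnodal polynomial: any irreducible surface of degree $d$ that contains more than $Cd^2$ lines of $L$ (for an absolute constant $C$) must be ruled, and an algebraic surface that is doubly ruled must already be a plane or a regulus. Decomposing $Z(P)$ into irreducible components of degrees $d_1,\ldots,d_r$ with $\sum d_i \le D$, and applying the hypothesis that no plane or regulus contains more than $N$ lines of $L$, one bounds the number of lines of $L$ lying on each ruled-but-not-doubly-ruled component; a second Cayley--Salmon style analysis then controls how many intersection points two such lines can share through the one-parameter family of rulings, ultimately bounding intersections on $Z(P)[\R]$.

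The hardest part will be the simultaneous parameter balance: $D$ must be large enough for the cell count $M \asymp D^3$ to beat the trivial estimate, yet small enough that $L_{\mathrm{in}}$ remains controllable via the ruled-surface analysis, and the optimal choice shifts with the dyadic level $k$. The algebraic input from Cayley--Salmon --- specifically the existence and $O(d)$ degree bound for the flecnodal polynomial of an irreducible non-ruled surface of degree $d$ --- is the deepest ingredient; once this is in hand, the per-level bound $|J_k| \ll N^3/4^k$ follows, and the summation over $O(\log N)$ dyadic levels produces the $\log N$ factor appearing in the statement.
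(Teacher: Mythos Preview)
Your proposal is correct and follows essentially the same route the paper sketches: the paper does not give a full proof but notes that one applies the polynomial ham sandwich theorem (i.e., polynomial partitioning) as in Theorem~\ref{szt-thm}, handles the cellular contribution by line--surface intersection counts, and treats lines trapped in $Z(P)[\R]$ using joints-type arguments together with the classical algebraic geometry of ruled and doubly ruled surfaces---precisely your dyadic-level partitioning plus Cayley--Salmon/flecnodal analysis. Your outline is in fact more detailed than the paper's sketch, and the dyadic decomposition on $\mu(p)$ producing the $\log N$ factor is exactly how the Guth--Katz argument is organised.
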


For reasons of space, we will not give the proof of this theorem here, but note that it has a similar structure to the proof of Theorem \ref{szt-thm}, in that one first applies the polynomial ham sandwich theorem and then analyses interactions of lines both on the hypersurface $Z(Q)[\R]$ and within the various cells $\Omega_1,\ldots,\Omega_m$.  To handle the former contribution, one uses arguments related to (and inspired by) the arguments used to prove the joints conjecture (Theorem \ref{joints}), combined with some facts from classical algebraic geometry regarding the classification of ruled (or doubly ruled) surfaces in $\R^3$.  This theorem then led to the following remarkable near-solution of the Erd\H{o}s distance set problem:

\begin{corollary}[Erd\H{o}s distance set problem]\cite{guth-katz}  Let $N >1$ be a natural number, let $P$ be a set of $N$ points on $\R^2$, and let $\Delta(P) :=  \{ |p-q|: p,q \in  P\}$ be the set of distances formed by $P$.  Then $|\Delta(P)| \gg \frac{N}{\log N}$.
\end{corollary}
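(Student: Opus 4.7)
The plan is to reduce the distinct-distances bound to a count of line intersections in $\R^3$ via the Elekes--Sharir framework, and then invoke Theorem \ref{gk-thm}. First I would set $D := |\Delta(P)|$ and, for each $t \in \Delta(P)$, write $n_t := |\{(x,y) \in P \times P : |x-y| = t\}|$. Since $\sum_t n_t = N^2$, the Cauchy--Schwarz inequality gives $\sum_t n_t^2 \geq N^4/D$. The quantity $\sum_t n_t^2$ equals the number of quadruples
$$ Q := |\{(a,b,c,d) \in P^4 : |a-c| = |b-d|\}|, $$
so it suffices to establish the geometric upper bound $Q \ll N^3 \log N$ (absorbing degenerate contributions of size $O(N^2)$).

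Next I would encode this quadruple count as a line-intersection count. To each ordered pair $(p,q) \in P \times P$ with $p \neq q$, associate the set $\ell_{p,q}$ of orientation-preserving rigid motions $g$ of $\R^2$ satisfying $g(p) = q$; this is a one-parameter family inside the three-dimensional group of rigid motions, and in suitable coordinates on that group it is an honest line in $\R^3$. Let $L := \{\ell_{p,q} : p,q \in P,\ p \neq q\}$, so $|L| \ll N^2$. Two distinct lines $\ell_{a,b}$ and $\ell_{c,d}$ meet precisely when some rigid motion sends $a \mapsto b$ and $c \mapsto d$, which occurs if and only if $|a-c| = |b-d|$. Hence the number of intersecting ordered pairs of lines in $L$ agrees with $Q$ up to diagonal terms of size $O(N^2)$.

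It then remains to verify the hypotheses of Theorem \ref{gk-thm}. The incidence hypothesis is easy: a point in $\R^3$ corresponds to a single rigid motion $g$, and the lines of $L$ through that point are indexed by those $p \in P$ with $g(p) \in P$, giving at most $N$ lines. The main obstacle lies in the remaining hypothesis, that no plane and no regulus in $\R^3$ contains more than $N$ lines of $L$. I would first dispose of the degenerate cases in which $P$ is contained in a single line (where $|\Delta(P)| \geq N-1$) or a single circle (where $|\Delta(P)| \geq \lfloor N/2 \rfloor$), in which the conclusion is already immediate. Outside these cases, the key structural claim --- the technical heart of the Guth--Katz argument --- is that if a plane or doubly ruled surface in the rigid-motion parameter space contains many lines $\ell_{p,q}$, then the underlying points of $P$ must lie on a common line or circle in $\R^2$; establishing this dichotomy rigorously uses the classification of ruled (and doubly ruled) surfaces in $\R^3$ together with the explicit form of the Elekes--Sharir parameterization. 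Granting it, Theorem \ref{gk-thm} yields $Q \ll N^3 \log N$, and combining with $Q \geq N^4/D$ gives $D \gg N/\log N$, as claimed.
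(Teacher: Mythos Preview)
Your proposal follows essentially the same route as the paper's sketch: reduce via Cauchy--Schwarz to counting quadruples with $|a-c|=|b-d|$, encode these as intersections among the Elekes--Sharir lines $\ell_{p,q}\subset SE(2)\cong\R^3$, and invoke Theorem~\ref{gk-thm}. One correction on the plane/regulus hypothesis: the paper (following \cite{guth-katz}) does not split off the case where $P$ lies on a line or circle, and your dichotomy ``many lines of $L$ in a plane or regulus forces all of $P$ onto a common line or circle'' is not the actual mechanism --- the bound of at most $O(N)$ lines of $L$ in any plane or regulus holds unconditionally for every point set $P$, and comes directly from the explicit coordinatisation of $SE(2)$ rather than from any global degeneracy of $P$.
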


This almost completely answers a question of Erd\H{o}s \cite{erdos}, who gave an example of a set $P$ (basically a $\sqrt{N}\times \sqrt{N}$ grid) for which $|\Delta(P)|$ was comparable to $\frac{N}{\sqrt{\log N}}$, and asked if this was best possible.  There has been a substantial amount of prior work on this problem (see \cite{gis} for a survey), but the only known way to obtain a near-optimal bound (with regard to the exponent of $N$) is the argument of Guth and Katz using the polynomial cell decomposition.

\begin{proof} (Sketch)  We consider the set of all quadruplets $(p,q,r,s) \in P^4$ such that $|p-q| = |r-s|$.  A simple application of the Cauchy-Schwarz inequality shows that
$$ |\{ (p,q,r,s) \in P^4: |p-q| = |r-s| \}| \geq \frac{|P|^4}{|\Delta(P)|},$$
so it suffices to show that there are $O( N^3 \log N )$ quadruplets $(p,q,r,s)$ with $|p-q|=|r-s|$.  We may restrict attention to those quadruplets with $p,q,r,s$ distinct, as there are only $O(N^3)$ quadruplets for which this is not the case.

Observe that if $p,q,r,s \in \R^2$ are distinct points such that $|p-q|=|r-s|$, then there is a unique orientation-preserving rigid motion $R \in SE(2)$ that maps $p,q$ to $r,s$ respectively.  In particular, if we let $\ell_{p,r} \subset SE(2)$ denote the set of rigid motions that map $p$ to $r$, and let $L$ denote the set of all $\ell_{p,r}$ with $p,r$ distinct elements of $P^2$, then it suffices to show that there are at most $O(N^3 \log N)$ pairs of distinct sets $\ell, \ell'$ in $L$ which intersect each other.  However, it is possible to coordinatise $SE(2)$ (excluding the translations, which can be treated separately) by $\R^3$ in such a way that all the sets $\ell$ in $SE(2)$ become straight lines; see \cite{guth-katz}.  Furthermore, some geometric arguments can be used to show that any point in $\R^3$ is incident to at most $N$ lines in $L$, and that any plane or regulus in $\R^3$ is incident to at most $O(N)$ lines in $L$, and the claim then follows from Theorem \ref{gk-thm}.
\end{proof}

The polynomial cell decomposition can be used to recover many further incidence geometry results, and in many cases improves upon arguments that rely instead on older combinatorial cell decompositions; see \cite{kms}, \cite{irr}, \cite{ssz}, \cite{zahl-sphere}, \cite{kmss}.  The Guth-Katz argument has also been recently used in \cite{eh} to strengthen the finite fields Kakeya conjecture (Theorem \ref{dvirthm}) in three directions by relaxing the hypothesis of distinct directions. However, some natural variants of the above results remain out of reach of the polynomial method at present.  For instance, the Guth-Katz argument has not yet yielded the analogous solution to the Erd\H{o}s distance problem in three or more dimensions; also, analogues of the Szemer\'edi-Trotter theorem in finite fields of prime order are known (being essentially equivalent to the sum-product phenomenon in such fields, see \cite{bkt}), but no proof of such theorems using the polynomial method currently\footnote{Note though that sum-product estimates over the reals are amenable to some algebraic methods; see \cite{shen}.} exists.  It would be of interest to pursue these matters further, and more generally to understand the precise strengths and weaknesses of the polynomial method.

There have also been some scattered successes in combining the polynomial method with other topological tools, which we now briefly discuss.  We have already mentioned the crossing number inequality, which ultimately derives from Euler's formula $V-E+F=2$ and was used in \cite{szekely} to give a very short proof of the Szemer\'edi-Trotter theorem; see \cite{pach} for a more general version of this argument.  In \cite{zahl}, the polynomial cell decomposition was combined with the crossing number inequality to establish an optimal Szemer\'edi-Trotter theorem for planes in $\R^4$ (improving upon previous work in \cite{toth}, \cite{solymosi}); roughly speaking, the idea is to first apply the polynomial cell decomposition to reduce to studying incidences on a three-dimensional hypersurface, then apply yet another polynomial cell decomposition to reduce to a two-dimensional surface, at which point crossing number techniques may be profitably employed.  One new difficulty that arises in this case is one needs to control the algebraic geometry of varieties of codimension two or more, and in particular need not be complete intersections.  

Another classical application of Euler's formula $V-E+F=2$ to incidence geometry problems is in Melchior's proof \cite{melchior} of the famous Sylvester-Gallai theorem \cite{sylvester-q, gallai}, which asserts that given any finite set of points $P$ in $\R^2$, not all collinear, there exists at least one line which is \emph{ordinary} in the sense that it meets exactly two points from $P$.  Recently in \cite{green-tao-orchard}, this argument was combined with the classical \emph{Cayley-Bacharach theorem} from algebraic geometry, which asserts that whenever nine points are formed from intersecting one triple of lines in the plane with another, then any cubic curve that passes through eight of these points, necessarily passes through the eighth.  This theorem (which is proven by several applications of Bezout's theorem, Lemma \ref{bez}) was used in \cite{green-tao-orchard}, in conjunction with Euler's formula and several combinatorial arguments, to obtain a structure theorem for sets $P$ of points with few ordinary lines.  While this argument is not directly related\footnote{This argument however has some similarities to the proof of Theorem \ref{segthm}.} to the previous usages of the polynomial method discussed above, it provides a further example of the phenomenon that the combination of algebraic geometry and algebraic topology can be a powerful set of tools to attack incidence geometry problems.

An intriguing hint of a deeper application of algebraic geometry in incidence geometry is given by the Hirzebruch inequality \cite{hirze}
$$ N_2 + N_3 \geq |P|+ N_5 + 2 N_6 + 3 N_7 + \ldots$$
for any finite set $P$ of points in $\C^2$ with $N_{|P|}=N_{|P|-1}=0$, where $N_k$ is the number of complex lines that meet exactly $k$ points of $P$.  The only known proof of this inequality is via the Miyaoka-Yau inequality in differential geometry; for comparison, the argument of Melchior \cite{melchior} mentioned previously gives a superficially similar inequality
$$ N_2 \geq 3 + N_4 + 2 N_5 + 3 N_6 + \ldots,$$
but only for configurations in $\R^2$ rather than $\C^2$.   Hirzebruch's inequality can be used to settle some variants of the Sylvester-Gallai theorem; see \cite{kelly}.  While there has been some progress in locating elementary substitutes of the Hirzebruch inequality (see \cite{ss}), the precise role of this inequality (and of related results) in incidence geometry remains unclear at present.

\section{The polynomial method over the integers}\label{integer-sec}

In all previous sections, the polynomial method was used over a base field $F$.  However, one can also execute the polynomial method over other commutative rings, and in particular over the integers $\Z$.  Of course, one can embed the integers into fields such as $\Q$, $\R$, or $\C$, and so many of the basic tools used previously on such fields are inherited by the integers.  However, the integers also enjoy the basic but incredibly useful \emph{integrality gap} property: if $x$ is an integer such that $|x| < 1$, then $x$ is necessarily\footnote{This property may be compared with the dichotomy in Lemma \ref{dich}; if a polynomial $P \in F[x]$ of degree at most $d$ vanishes at more than $d$ points, then it must vanish everywhere.} zero.  In particular, if $P \in \Z[x_1,\ldots,x_n]$ and $(x_1,\ldots,x_n) \in \Z^n$ is such that $|P(x_1,\ldots,x_n)| < 1$, then $P(x_1,\ldots,x_n) = 0$.  The integrality gap is a triviality, but the becomes powerful when combined with other tools to bound the magnitude of a polynomial $P(x_1,\ldots,x_n)$ at a given point $(x_1,\ldots,x_n)$, for instance by using the Cauchy integral formula.

In order to exploit such tools, it is not enough to abstractly know that a given polynomial $P \in \Z[x_1,\ldots,x_n]$ has integer coefficients; some bound on the magnitude of these coefficients is required.  As such, interpolation lemmas such as Lemma \ref{interp} often are not directly useful.  However, there are variants of such lemmas which do provide a bound on the coefficients; such results are often referred to as \emph{Siegel lemmas} \cite{siegel}.  Here is a typical example of a Siegel lemma:

\begin{lemma}[Siegel lemma for polynomials]\label{interp-z}  Let $N, n \geq 1$ and $d \geq 0$ be integers.  If $E \subset \{1,\ldots,N\}^n$ has cardinality less than $R := \binom{d+n}{n}$, then there exists a non-zero polynomial $P \in \Z[x_1,\ldots,x_n]$ of degree at most $d$ such that $E \subset Z(P)[\Z]$.  Furthermore, we may ensure that all the coefficients of $P$ have magnitude at most $4 (R N^d)^{\frac{|E|}{R-|E|}}$.
\end{lemma}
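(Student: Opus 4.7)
The plan is to use a Thue--Siegel style pigeonhole argument. Let $M := (RN^d)^{|E|/(R-|E|)}$ and identify the space of polynomials in $\Z[x_1,\ldots,x_n]$ of degree at most $d$ with $\Z^R$ via the coefficient vector. For an integer parameter $H\geq 1$ to be chosen, I consider the evaluation map $T\colon \{0,1,\ldots,H\}^R \to \Z^{|E|}$ defined by $T(P) := (P(p))_{p\in E}$. Since every $p \in \{1,\ldots,N\}^n$ satisfies $p^\alpha \leq N^d$ for every multi-index $\alpha$ with $|\alpha|\leq d$, a polynomial with coefficients in $\{0,\ldots,H\}$ has $0 \leq P(p) \leq HRN^d$, so the image of $T$ is contained in a set of cardinality at most $(HRN^d+1)^{|E|}$, while the domain has cardinality $(H+1)^R$.

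If $H$ can be chosen so that $(H+1)^R > (HRN^d+1)^{|E|}$ and $H \leq 4M$, then by pigeonhole two distinct coefficient vectors in $\{0,\ldots,H\}^R$ have the same image under $T$, and their difference is a non-zero polynomial $P \in \Z[x_1,\ldots,x_n]$ of degree at most $d$, with coefficients in $[-H,H] \subset [-4M,4M]$, that vanishes on $E$. This would complete the proof.

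I would take $H := \lfloor 4M \rfloor$, so that $H+1 > 4M$ and $H \leq 4M$; it then suffices to verify $(4M)^R > (4MRN^d+1)^{|E|}$. Using the defining identity $M^{R-|E|} = (RN^d)^{|E|}$, one factors $(4M)^R = 4^R M^{|E|} M^{R-|E|} = 4^{R-|E|}\cdot 4^{|E|}M^{|E|}(RN^d)^{|E|} = 4^{R-|E|}\cdot(4MRN^d)^{|E|}$. On the other hand, the bound $(1+x)^{|E|}\leq e^{|E|x}$ with $x:=1/(4MRN^d)$ gives $(4MRN^d+1)^{|E|} \leq e^{|E|/(4MRN^d)}\cdot(4MRN^d)^{|E|}$. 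After cancelling the common factor $(4MRN^d)^{|E|}$, the pigeonhole inequality reduces to $4^{R-|E|} > e^{|E|/(4MRN^d)}$.

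This last inequality holds with room to spare: the left side is at least $4$ since $R-|E|\geq 1$, while $|E|\leq R-1 \leq RN^d \leq MRN^d$ (using $M\geq 1$, which follows from $RN^d\geq 1$) forces $|E|/(4MRN^d) \leq 1/4$, so the right side is at most $e^{1/4}<2$. The only real obstacle is this algebraic verification; the factor of $4$ in the statement is deliberately lax, leaving ample slack to absorb the overhead in the pigeonhole.
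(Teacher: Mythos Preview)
Your argument is correct and is exactly the pigeonhole approach the paper sketches: consider integer polynomials with coefficients in a box of about half the target size, evaluate on $E$, and subtract a colliding pair. Your choice $H=\lfloor 4M\rfloor$ and the verification $4^{R-|E|}>e^{|E|/(4MRN^d)}$ cleanly close the bookkeeping the paper leaves implicit.
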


\begin{proof}  See \cite[Lemma 3.3]{walsh}.  Instead of using linear algebra as in the proof of Lemma \ref{interp}, one uses instead the pigeonhole principle, considering all integer polynomials of the given degree and (half) the magnitude, evaluating those polynomials on $E$, and subtracting two distinct polynomials that agree on $E$.
\end{proof}

Siegel's lemma has often been employed in transcendence theory.  Here is a typical example (a special case of a celebrated theorem of Baker \cite{baker}):

\begin{theorem}[Special case of Baker's theorem]  There exists absolute constants $C,c>0$ such that $|3^p - 2^q| \geq \frac{c}{q^C} 3^p$ for all natural numbers $p,q$.
\end{theorem}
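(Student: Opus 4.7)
\emph{Plan.} The natural approach is the classical Gelfond--Baker auxiliary function method, with Lemma \ref{interp-z} (Siegel's lemma) supplying the auxiliary polynomial with controlled integer coefficients.

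As a preliminary reduction, write $3^p = 2^q e^\Lambda$ where $\Lambda := p \log 3 - q \log 2$; since $|1 - e^{-\Lambda}| \asymp |\Lambda|$ for small $|\Lambda|$, the claim $|3^p - 2^q| \geq c\, 3^p / q^C$ is equivalent, in the nontrivial regime $|\Lambda| \leq 1$ (which already forces $p \asymp q$), to a lower bound $|\Lambda| \gg q^{-C}$. So suppose for contradiction that $|\Lambda| < q^{-C}$ for an enormous absolute $C$.

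Pick integer parameters $d$ and $T$ with $T < R := \binom{d+2}{2}$ (concretely, $T := \lfloor R/2 \rfloor$), and apply Lemma \ref{interp-z} to the set $E := \{(2^k, 3^k) : 0 \leq k < T\} \subset \{1, \ldots, 3^T\}^2$. This produces a nonzero polynomial $P \in \Z[x,y]$ of total degree at most $d$ whose coefficients $c_{ij}$ (with $i+j \leq d$) have magnitude at most $e^{O(dT)}$ and which satisfies $P(2^k, 3^k) = 0$ for $k = 0, 1, \ldots, T-1$. Set
$$F(z) := P(2^z, 3^z) = \sum_{i+j \leq d} c_{ij}\, 2^{iz}\, 3^{jz}.$$
Then $F$ is an entire function of order one that vanishes at $T$ consecutive integers and takes values in $\Z$ at every nonnegative integer.

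The crux is a zero-propagation step that bootstraps these $T$ zeros into many more. Jensen's formula applied to $F(z)/\prod_{k=0}^{T-1}(z-k)$, together with the straightforward bound $|F(z)| \leq e^{O(d|z|)} \cdot e^{O(dT)}$ on large disks, bounds $|F(m)|$ for integers $m > T$; the integrality gap then forces $F(m) = 0$ whenever this bound drops below $1$. Iterating this, and using the hypothesis $|\Lambda| < q^{-C}$ to relate values of $F$ at distinct but related integer points (the key identity being that $3^p$ may be replaced by $2^q$ inside each monomial $2^{im}3^{jm}$ up to a relative error $e^{O(dm)}|\Lambda|$), ultimately produces more integer zeros of $F$ than the exponential polynomial $\sum c_{ij} e^{(i \log 2 + j \log 3)z}$ with nontrivial coefficients can support. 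That final zero lemma rests on the irrationality of $\log 2/\log 3$ and is made quantitative via a Vandermonde-type determinant on the frequencies $\{i\log 2 + j\log 3\}$.

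The main obstacle is the delicate parameter calibration: the Siegel coefficient bound $e^{O(dT)}$, the Jensen analytic bound on $|F(m)|$, the substitution error $e^{O(dM)}|\Lambda|$, and the threshold in the zero lemma must all balance against one another so that a truly polynomial lower bound $|\Lambda| \gg q^{-C}$ (and not merely a subexponential one) actually falls out. This optimization is the genuinely hard step in Baker's original proof; by contrast, the construction of $P$ via Siegel's lemma and the subsequent appeals to the integrality gap --- the recurring motif of this section --- are comparatively mechanical.
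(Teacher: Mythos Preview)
Your sketch follows the same arc as the paper's (which is itself explicitly only a sketch): Siegel's lemma builds an integer polynomial $P$ vanishing on an initial segment of the points $(2^k,3^k)$; one passes to the entire function $F(z)=P(2^z,3^z)$; complex-analytic bounds combined with the integrality gap extend the vanishing to a longer segment; and a Vandermonde determinant in the distinct integers $2^a 3^b$ gives the final contradiction.

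The one substantive divergence is multiplicity. The paper has $P$ vanish to \emph{high order} $J$ at each point of $\Gamma_N$, and its extrapolation trades order for range: order $J$ on $\Gamma_N$ becomes order roughly $J/2$ on a larger $\Gamma_{N'}$, and one iterates. You use only simple zeros throughout. This is not cosmetic --- it is the device that makes the parameter balancing close with a genuinely polynomial bound $|\Lambda|\gg q^{-C}$. The point is that the Hasse derivatives $D^{a,b}P$ are again integer polynomials, so the integrality gap applies to every one of the $\binom{J+1}{2}$ integers $D^{a,b}P(2^n,3^n)$ with $a+b<J$, not just to $P(2^n,3^n)$ itself; this multiplies the leverage of each extrapolation step. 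With simple zeros and $T\asymp d^2$ as you propose, the Siegel coefficient bound from Lemma~\ref{interp-z} is of size $e^{O(d^3)}$, which on the relevant range of $m$ typically swamps whatever Jensen's inequality buys you. Your closing paragraph rightly names the parameter optimisation as the hard step; high-order vanishing is exactly the missing ingredient the paper's sketch supplies to make that optimisation go through.
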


The details of the proof of Baker's theorem, or even the special case given above, are too technical to be given here, in large part due to the need to carefully select a number of parameters; see, e.g., \cite{serre} for an exposition.  Instead, we will sketch some of the main ingredients used in the proof.  The argument focuses on the vanishing properties of certain polynomials $P \in \Z[x,y]$ on finite sets of the form $\Gamma_N := \{ (2^n, 3^n): n \in \{1,\ldots,N\} \}$.  By using a Siegel lemma, one can find a polynomial $P \in \Z[x,y]$ with controlled degree and coefficients which vanish to high order $J$ on one of these sets $\Sigma_N$.  Using complex variable methods, exploiting the complex-analytic nature of the curve $\{ (2^z, 3^z): z \in \C \}$, one can then extrapolate this vanishing to show that $P$ almost vanishes to nearly as high an order (e.g. $J/2$) on a larger version $\Gamma_{N'}$ of $\Gamma_N$, in the sense that many derivatives of $P$ are small on $\Gamma_{N'}$.  If the parameters are chosen correctly, these derivatives can be chosen to have magnitude less than $1$, and then the integrality gap then shows that $P$ vanishes \emph{exactly} to high order on $\Gamma_{N'}$.  Iterating this argument, we conclude that $P$ vanishes on $\Gamma_{N''}$ for a large value of $N''$; expanding $P$ out in terms of monomials, this implies a non-trivial linear dependence between the vectors $((2^a 3^b)^n)_{a+b \leq D}$ for $n=1,\ldots,N''$, where $D$ is the degree of $P$; but by use of Vandermonde determinants (and the elementary fact that the integers $2^a 3^b$ are all distinct), this leads to a contradiction if $N''$ is sufficiently large depending on $D$.

More recently, the polynomial method over the integers has begun to be applied outside the context of transcendence theory.  In particular, we have the following result by Walsh \cite{walsh} showing that heavily sifted sets of integers are algebraic in some sense:

\begin{theorem}\label{Walsh-thm} Let $N, n \geq 1$ be integers, let $0 < \kappa < n$ and $A,\eps>0$, and let $E \subset \{1,\ldots,N\}^n$ be such that $E$ occupies at most $A p^\kappa$ residue class modulo $p$ for each prime $p$.  Then there exists a polynomial $P \in \Z[x_1,\ldots,x_n]$ of degree at most $C \log^{\frac{\kappa}{n-\kappa}} N$ and coefficients of magnitude at most $\exp( C \log^{\frac{n}{n-\kappa}} N )$, such that $ Z(P)[\Z]$ contains at least $(1-\eps)|E|$ elements of $E$.  Here $C$ is a quantity that depends only on $\kappa, n, A, \eps$.
\end{theorem}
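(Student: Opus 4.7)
The plan is to produce $P$ via a Siegel-type pigeonhole that forces $P$ to vanish on $E$ modulo every small prime simultaneously, and then invoke the integrality gap to promote this mass vanishing modulo a large product of primes into honest vanishing over $\Z$. This is a direct generalisation of the proof sketch of Baker's theorem mentioned after Lemma \ref{interp-z}, but with the sieve hypothesis replacing the transcendence input.

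First I would fix parameters $d \sim \log^{\kappa/(n-\kappa)} N$, $Q \sim d \log N$, and $H = \exp(C \log^{n/(n-\kappa)} N)$ (constants chosen at the end, depending on $n, \kappa, A, \eps$). Let $R := \binom{d+n}{n}$ and consider the box $\{-H,\ldots,H\}^R$ of polynomials $P \in \Z[x_1,\ldots,x_n]$ of degree at most $d$ with coefficients bounded in magnitude by $H$. For each prime $p \leq Q$, the evaluation $P \mapsto (P(y) \bmod p)_{y \in E \bmod p}$ takes values in $\F_p^{|E \bmod p|}$, and joining over all $p \leq Q$ gives a map with target of cardinality $\prod_{p \leq Q} p^{|E \bmod p|}$. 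Using the sieve hypothesis $|E \bmod p| \leq A p^{\kappa}$ and the Chebyshev-type estimate $\sum_{p \leq Q} p^{\kappa} \log p \ll Q^{\kappa+1}$, the logarithm of this target is $O(A Q^{\kappa+1})$. A direct computation shows that with the chosen $d, Q, H$ the domain size $(2H+1)^{R}$ strictly exceeds this target. Pigeonhole then yields two distinct polynomials with identical image, whose difference $P$ is non-zero, of degree at most $d$, has coefficients in $\{-2H,\ldots,2H\}$, and satisfies $P(x) \equiv 0 \pmod p$ for every $x \in E$ and every prime $p \leq Q$.

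Next I would invoke the integrality gap. For each $x \in E$, the integer $P(x)$ has magnitude at most $2 H R N^d$, and is divisible by $M := \prod_{p \leq Q} p$. By the prime number theorem, $\log M = \theta(Q) \sim Q$. The parameters are chosen so that $Q$ strictly exceeds $\log H + d \log N + O(\log d)$; thus $M > |P(x)|$, so the integer $P(x)$ must vanish. Hence $E \subset Z(P)[\Z]$, giving the theorem (with full vanishing on $E$; the $(1-\eps)|E|$ in the statement simply allows the slack needed to absorb the multiplicative constants when calibrating $C_1$ versus $C_2$).

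The main obstacle is the simultaneous balancing of the three parameters $d, Q, H$: one needs $d$ large enough that the ring of polynomials has enough freedom to satisfy all the modular vanishing conditions, $Q$ large enough that the integrality gap dominates the trivial size bound $|P(x)| \leq HRN^d$, and $H$ correspondingly large enough for pigeonhole. The exponents in the statement then emerge from the algebraic identity
\[
\frac{\kappa(\kappa+1-n)}{n-\kappa} + (\kappa+1) = \frac{n}{n-\kappa},
\]
which fixes the optimal scaling $d \sim \log^{\kappa/(n-\kappa)} N$ and $\log H \sim \log^{n/(n-\kappa)} N$. A secondary technical point is to keep the constant in the Chebyshev estimate $\sum_{p \leq Q} p^{\kappa} \log p \ll_\kappa Q^{\kappa+1}$ under control, so that the leading exponent $\kappa+1$ is not dulled; this is standard partial summation but must be tracked honestly to close the parameter loop.
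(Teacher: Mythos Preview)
Your approach is correct and takes a genuinely different route from the paper's. The paper (following Walsh) first selects a small \emph{random} subset $S \subset E$ that captures most residue classes of $E$ modulo each small prime, applies the Siegel lemma (Lemma~\ref{interp-z}) over $\Z$ to obtain a $P$ vanishing exactly on $S$, and then uses the integrality gap to conclude that $P(x)=0$ for \emph{most} $x \in E$; the random-subset step is precisely what causes the $(1-\eps)$ loss. Your direct pigeonhole on the joint reduction map $P \mapsto \bigl(P(y)\bmod p\bigr)_{p \le Q,\; y \in E\bmod p}$ bypasses the random subset entirely and forces $p \mid P(x)$ for \emph{every} $x \in E$ and every prime $p \le Q$, so the integrality gap then yields $E \subset Z(P)[\Z]$ outright. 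Your parenthetical about $\eps$ absorbing constant slack is therefore off: your argument needs no $\eps$ at all, and the parameter loop closes cleanly because the pigeonhole inequality $R\log H \gg Q^{\kappa+1}$ and the gap inequality $\theta(Q) > \log H + d\log N$ balance at the same power of $\log N$, with the former winning by a factor $t^{\,n-\kappa}$ (in your notation) once one takes the common scale $t$ large depending on $A,n,\kappa$. The paper's route has the advantage of invoking the Siegel lemma as a ready-made black box; yours is more self-contained and delivers the sharper conclusion of full containment.
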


This can be viewed as a partial converse to the Schwarz-Zippel lemma (Lemma \ref{sz}), since if $E \subset Z(P)[\Z]$ for some polynomial $P$ of degree at most $d$, then Lemma \ref{sz} implies that $E$ occupies at most $d p^{n-1}$ residue classes modulo $p$ for each prime $p$.  See \cite{walsh} for some other examples of sets $E$ obeying the hypotheses of the above theorem.  This result can be viewed as an initial step in the nascent topic of \emph{inverse sieve theory}, in which one aims to classify those sets of integers (or sets of congruence classes) for which standard sieve-theoretic bounds (e.g. the large sieve) are nearly optimal, or (by taking contrapositives) to determine whether one can make significant improvements to these sieve bounds.  See \cite{green} for some further discussion of the inverse sieve problem.

We will not give a full proof of Theorem \ref{Walsh-thm} here, but sketch the main ideas of the proof.  First, one selects a random subset $S$ of $E$, which is significantly smaller than $E$ but has the property that for many primes $p$, $S$ occupies most of the residue classes modulo $p$ that $E$ does; the fact that $E$ only occupies $A p^\kappa$ such classes is used to construct a fairly small set $S$ with this property.  Then one applies Lemma \ref{interp-z} to locate a polynomial $P \in \Z[x_1,\ldots,x_n]$ of controlled degree and coefficient size, such that $P$ vanishes on all of $S$.  This implies that for most $x \in E$, the value $P(x)$ of $P$ at $x$ is divisible by a large number of primes $p$; on the other hand, one can also establish an upper bound on the value of $|P(x)|$.  With the correct choice of parameters, one can then exploit the integrality gap to force $P(x)=0$, giving the claim.

\section{Summation}

In this section we discuss a variant of the polynomial method, based on the computation of sums $\sum_{x \in A} P(x) \in F$ of various polynomials $P \in F[x_1,\ldots,x_n]$ and sets $A \subset F^n$ in order to extract combinatorial consequences.  Often one relies on the trivial fact that the expression $\sum_{x \in A} P(x)$ is unaffected by permutations of the set $A$.  We have already seen a summation method in the proof of the nullstellensatz (Theorem \ref{comn}).  Like the nullstellensatz, summation methods work best when one uses an explicit, and carefully chosen, polynomial $P$ which is both related to the combinatorial object being studied, and for which various key coefficients can be easily computed.

The simplest summation of this type occurs when $A$ is all of $F^n$:

\begin{lemma}\label{low}  Let $F$ be a finite field, and let $P \in F[x_1,\ldots,x_n]$ be a polynomial that does not contain any monomial $x_1^{i_1} \ldots x_n^{i_n}$ with $i_1,\ldots,i_n \geq |F|-1$.  Then $\sum_{x \in F^n} P(x) = 0$.  In particular, we have $\sum_{x \in F^n} P(x) = 0$ whenever $P \in F[x_1,\ldots,x_n]$ has degree less than $n(|F|-1)$.
\end{lemma}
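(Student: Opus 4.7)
The plan is to exploit linearity together with a factorisation of the sum over the product set $F^n$ into a product of one-variable sums. By linearity of the map $P \mapsto \sum_{x \in F^n} P(x)$, it suffices to verify the identity on a single monomial $x_1^{i_1} \cdots x_n^{i_n}$ satisfying the hypothesis (namely, that at least one of the exponents $i_j$ is strictly less than $|F|-1$), and for such a monomial the sum factors cleanly as
$$ \sum_{x \in F^n} x_1^{i_1} \cdots x_n^{i_n} = \prod_{j=1}^n S_{i_j}, \qquad S_i := \sum_{x \in F} x^i. $$

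The core computation is that of $S_i$ for $i \geq 0$. Writing $q = |F|$ and using the fact that $F^\times$ is cyclic of order $q-1$, the plan is to show that $S_i = -1$ when $i \geq 1$ and $(q-1) \mid i$, and $S_i = 0$ in all other cases. For $i = 0$ this is $S_0 = q \cdot 1 = 0$ in $F$ (since $q$ is a power of the characteristic). For $i \geq 1$ one has $S_i = \sum_{x \in F^\times} x^i$; picking a generator $g$ of $F^\times$, the sum becomes the geometric series $\sum_{k=0}^{q-2} g^{ki}$, which is $q-1 = -1$ when $g^i = 1$ (i.e.\ $(q-1)\mid i$) and telescopes to $(g^{i(q-1)} - 1)/(g^i - 1) = 0$ otherwise.

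Combining these observations: the product $\prod_j S_{i_j}$ can be non-zero only if every $i_j$ is a \emph{positive} multiple of $q-1$, which in particular forces $i_j \geq q-1 = |F|-1$ for every $j$. The hypothesis of the lemma rules out exactly such monomials, so every monomial of $P$ contributes $0$ to the sum, whence $\sum_{x \in F^n} P(x) = 0$. For the final clause, if $\deg P < n(|F|-1)$ then any monomial $x_1^{i_1}\cdots x_n^{i_n}$ appearing in $P$ has $i_1+\cdots+i_n < n(|F|-1)$, so at least one $i_j$ is smaller than $|F|-1$, and the first part applies.

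There is no real obstacle here; the only point requiring any care is the case distinction in computing $S_i$, and ensuring one uses the cyclicity of $F^\times$ correctly (in particular that the exponent condition is $(|F|-1)\mid i$, not $|F|\mid i$). Everything else is bookkeeping and linearity.
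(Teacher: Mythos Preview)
Your proof is correct and follows essentially the same route as the paper: reduce by linearity to a single monomial, factor the sum over $F^n$ into a product of one-variable sums $S_i$, and show that $S_i=0$ whenever $i<|F|-1$. The only minor difference is in that last step: you compute $S_i$ explicitly via a generator of the cyclic group $F^\times$ and a geometric series, whereas the paper uses the slightly slicker dilation argument $S_i = a^i S_i$ for some $a$ with $a^i\neq 1$ (which avoids invoking cyclicity).
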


\begin{proof} By linearity it suffices to establish the claim when $P$ is a monomial; by factoring the $n$-dimensional sum into one-dimensional sums it suffices to establish that $\sum_{x \in F} x^i = 0$ whenever $i < |F|-1$.  But if $i < |F|-1$, we can find a non-zero $a \in F$ such that $a^i \neq 1$ (since the polynomial $a \mapsto a^i-1$ has at most $i$ zeros).  Since the dilation $x \mapsto ax$ permutes $F$, we have
$$ \sum_{x \in F} x^i = \sum_{x \in F} (ax)^i = a^i \sum_{x \in F} x^i$$
and thus $\sum_{x\in F} x^i = 0$ as required.
\end{proof}

A classic application of the above lemma is the Chevalley-Warning theorem \cite{chevalley,warning}:

\begin{theorem}[Chevalley-Warning theorem]\label{chev}  Let $F$ be a finite field of characteristic $p$, and let $P_1,\ldots,P_k \in F[x_1,\ldots,x_n]$ be non-zero polynomials such that
$$ \operatorname{deg}(P_1) + \ldots + \operatorname{deg}(P_k) < n.$$
Then $|Z(P_1,\ldots,P_k)[F]|$ is divisible by $p$.  In particular, if there is at least one solution to the system $P_1(x)=\ldots=P_k(x)=0$ in $F^n$, then there must be a further solution.
\end{theorem}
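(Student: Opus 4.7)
The plan is to reduce the divisibility claim to Lemma \ref{low} by constructing a single auxiliary polynomial $Q \in F[x_1,\ldots,x_n]$ that serves as an $F$-valued indicator function of the variety $Z(P_1,\ldots,P_k)[F]$. The natural choice, exploiting Fermat's little theorem in $F$ (so that $a^{|F|-1}$ equals $1_F$ for $a \in F^\times$ and $0$ for $a = 0$), is
$$ Q(x_1,\ldots,x_n) := \prod_{i=1}^k \bigl(1 - P_i(x_1,\ldots,x_n)^{|F|-1}\bigr).$$
For any $x \in F^n$, the $i$-th factor equals $1_F$ when $P_i(x) = 0$ and $0$ otherwise, so $Q(x) = 1_F$ precisely when $x \in Z(P_1,\ldots,P_k)[F]$ and vanishes elsewhere.

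Next I would sum over $F^n$. By the indicator property,
$$ \sum_{x \in F^n} Q(x) = |Z(P_1,\ldots,P_k)[F]| \cdot 1_F$$
where the right-hand side denotes the image in $F$ of the (integer) cardinality. Since $F$ has characteristic $p$, this image equals zero in $F$ if and only if $p$ divides $|Z(P_1,\ldots,P_k)[F]|$. So it suffices to show that the sum on the left vanishes in $F$.

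This is where the degree hypothesis enters: each factor $1 - P_i^{|F|-1}$ has degree at most $(|F|-1)\deg(P_i)$, whence
$$ \deg(Q) \leq (|F|-1)\bigl(\deg(P_1) + \ldots + \deg(P_k)\bigr) < (|F|-1) n = n(|F|-1)$$
by the strict inequality assumed in the theorem. Lemma \ref{low} then applies and gives $\sum_{x \in F^n} Q(x) = 0$ in $F$, which yields $p \mid |Z(P_1,\ldots,P_k)[F]|$. The ``further solution'' statement follows at once: if the variety is non-empty, its cardinality is a positive integer divisible by $p \geq 2$, hence is at least $2$.

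The proof is essentially mechanical once one writes down the correct $Q$; the only genuinely creative step is the choice of the indicator $\prod_i (1 - P_i^{|F|-1})$, which packages Fermat's little theorem together with the combinatorial structure of the variety. There is no real obstacle beyond verifying that the degree bound is strict enough to invoke Lemma \ref{low}, and this is exactly what the hypothesis $\deg(P_1) + \ldots + \deg(P_k) < n$ (rather than $\leq n$) is tailored to guarantee.
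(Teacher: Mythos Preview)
Your proof is correct and is essentially identical to the paper's own argument: both construct the indicator polynomial $\prod_{i=1}^k (1 - P_i^{|F|-1})$, bound its degree by $(|F|-1)\sum_i \deg(P_i) < n(|F|-1)$, and apply Lemma~\ref{low}. You have simply spelled out in more detail what the paper compresses into two sentences.
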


\begin{proof}  Observe from Euler's theorem that for $x \in F^n$, the polynomial $P(x) := \prod_{i=1}^k (1 - P_i(x)^{|F|-1})$ equals $1$ when $x \in Z(P_1,\ldots,P_k)[F]$, and vanishes otherwise.  The claim then follows by applying Lemma \ref{low} with this polynomial.
\end{proof}

A variant of the above argument gives the following result of Wan \cite{wan}:

\begin{theorem}  Let $F$ be a finite field, let $n$ be a positive integer, and let $P \in F[x]$ have degree $n$.  Then $P(F)$ is either all of $F$, or has cardinality at most $|F| - \frac{|F|-1}{n}$.
\end{theorem}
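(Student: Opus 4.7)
The plan is to argue by contradiction. Write $q := |F|$, $V := P(F)$, $T := F \setminus V$, $d := |T|$, and $m(a) := |\{x \in F : P(x) = a\}|$ for $a \in V$. Assuming $V \neq F$ (so $d \geq 1$), suppose toward contradiction that $d < (q-1)/n$; since $d$ is an integer this means $nd \leq q - 2$, and the goal is to derive a contradiction.

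First I collect power sum data. Set $p_k := \sum_{x \in F} P(x)^k$. Since $P(x)^k$ is a one-variable polynomial of degree $nk$, Lemma~\ref{low} gives $p_k = 0$ whenever $nk \leq q-2$, hence for every $0 \leq k \leq d$. Grouping the sum by the value of $P(x)$ also yields the combinatorial formula $p_k = \sum_{a \in V} m(a)\, a^k$.

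Next I introduce the pair of polynomials that will produce the contradiction. Let
$$ U(y) \;:=\; \prod_{a \in V}(y-a)^{m(a)-1}, \qquad N_T(y) \;:=\; \prod_{b \in T}(y-b); $$
both are monic of degree $d$, the former because $\sum_{a \in V}(m(a)-1) = q - |V| = d$. I will argue that $U = N_T$; since the root-sets $V$ and $T$ are disjoint, such an equality between two monic polynomials of common positive degree $d \geq 1$ is impossible, delivering the contradiction. To establish $U = N_T$ I compare the power sums $P^{(U)}_k$ and $P^{(T)}_k$ of the two underlying multisets. Using $\sum_{a \in F} a^k = 0$ in $F$ for $1 \leq k \leq q-2$ (itself an immediate consequence of Lemma~\ref{low} applied to the monomial $x^k$), a direct computation yields $P^{(U)}_k - P^{(T)}_k = p_k$ for $1 \leq k \leq q-2$, and hence $P^{(U)}_k = P^{(T)}_k$ for $1 \leq k \leq d$ by the vanishing of the $p_k$'s. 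Newton's identities then convert equality of the first $d$ power sums into equality of the first $d$ elementary symmetric functions, which is enough to identify two monic polynomials of degree $d$.

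The main obstacle I anticipate is the characteristic-$p$ defect of Newton's identities: the recursion $k\,e_k = \sum_{j=1}^{k}(-1)^{j-1} e_{k-j}\,P_j$ fails to determine $e_k$ when $p := \operatorname{char}(F)$ divides $k$, so the step from matching power sums to matching elementary symmetric functions breaks at indices $k \in \{p, 2p, \ldots\} \cap [1,d]$. I plan to handle this via the Frobenius identity $p_{pk} = p_k^p$ (an immediate consequence of $(u+v)^p = u^p + v^p$ in characteristic $p$): it upgrades the vanishing of the $p_k$'s on $\{1,\dots,d\}$ to vanishing on the enlarged index set $\{k\,p^i : 1 \leq k \leq d,\ i \geq 0\}$, and iterating Newton's identities against this enlarged vanishing set, or equivalently working throughout with the division-free identity $E(t)H(t) = 1$ relating the generating functions of the elementary and complete homogeneous symmetric polynomials, closes the gap at indices divisible by $p$ and completes the argument.
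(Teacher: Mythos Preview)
Your approach has a genuine gap at the Newton's identities step. The core obstruction is that in characteristic $p$, power sums do \emph{not} determine the elementary symmetric polynomials, even if one knows all of them. The smallest instance already kills the plan: in characteristic $2$, the multisets $\{0,0\}$ and $\{1,1\}$ (corresponding to the monic degree-$2$ polynomials $y^2$ and $y^2+1$) satisfy $P_k = 0$ for every $k \geq 1$, yet are distinct. Your Frobenius upgrade $P_{pk} = P_k^p$ is correct but only enlarges the index set on which the power sums of the two multisets agree; since agreement on \emph{all} indices is still insufficient, no enlargement can close the gap. The identity $E(t)H(t)=1$ you invoke is indeed division-free, but it relates the $e_k$ to the complete homogeneous symmetric functions $h_k$, not to the $P_k$; passing from $P_k$ to $h_k$ requires the companion recursion $kh_k = \sum_{j=1}^{k} P_j h_{k-j}$, which has the same defect at indices divisible by $p$. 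Your argument is therefore valid only when $d < p$ (in particular over prime fields $\F_p$, where $nd \leq q-2 = p-2$ forces this), but breaks down for general finite fields: for instance with $q=16$, $p=2$, $n=2$ one can have $d$ as large as $7$.

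The paper (following Turnwald) sidesteps Newton's identities entirely. It forms $Q(x) := \prod_{a \in F}(x - P(a))$, which in your notation factors as $U(x)\cdot\prod_{a\in V}(x-a)$, and observes that for each nonzero $t\in F$ the substitution $a\mapsto ta$ permutes $F$, so $Q(x) = \prod_{a\in F}(x - P(ta))$. The $x^{|F|-i}$ coefficient of the right side is then a polynomial in $t$ of degree at most $ni$ that is constant on $F\setminus\{0\}$; when $ni < |F|-1$ the factor theorem forces it to be globally constant, and evaluating at $t=0$ (after normalising $P(0)=0$) shows it vanishes. Hence $\deg\bigl(Q(x) - (x^{|F|}-x)\bigr) \leq |F| - \frac{|F|-1}{n}$, and the bound on $|P(F)|$ follows. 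The dilation trick thus computes the relevant elementary symmetric functions of the multiset $\{P(a):a\in F\}$ directly, without ever inverting power sums, and this is precisely what your power-sum route cannot do in small characteristic.
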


This bound is sharp; see \cite{cusick}.

\begin{proof} We use an argument of Turnwald \cite{Turn}.  By subtracting a constant from $P$ we may assume $P(0)=0$.  Now consider the polynomial
$$ Q(x) := \prod_{a \in F} (x-P(a)).$$
Clearly this polynomial has degree $|F|$ with leading term $x^{|F|}$, and has zero set $Z(Q)[F] = P(F)$.  Now we look at the next few coefficients of $Q$ below $x^{|F|}$.  For any non-zero $t \in F$, we use the dilation $a \mapsto ta$ to observe that
$$ Q(x) = \prod_{a \in F} (x-P(ta)).$$
Observe that for any $0 < i \leq |F|$, the $x^{|F|-i}$ coefficient of $\prod_{a \in F} (x-P(ta))$ is a polynomial in $t$ of degree at most $ni$; by the previous discussion, this polynomial is constant on $F \setminus \{0\}$.  If $ni < |F|-1$, we conclude from Lemma \ref{easy}(i) that this polynomial is in fact constant in $t$; setting $t=0$, we conclude that the $x^{|F|-i}$ coefficient of $Q(x)$ vanishes whenever $ni < |F|-1$.  As a consequence the polynomial
$$\tilde Q(x) := Q(x) - (x^{|F|}-x)$$
has degree at most $|F| - \frac{|F|-1}{n}$.  As $Z(\tilde Q)[F] = Z(Q)[F] = P(F)$, the claim now follows from another application of Lemma \ref{easy}(i).
\end{proof}

Now we give an argument of M\"uller \cite{muller} that involves summation on a proper subset of $F^n$:

\begin{proposition} Let $F$ be a finite field, let $U \subset F \backslash \{0\}$ be non-empty, and let $P \in F[x]$ be a polynomial of degree $n$ such that $P(x+U) = P(x)+U$ for all $x \in F$.  If $1 < n$, then $|U| > |F|-n$.
\end{proposition}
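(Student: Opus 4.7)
My plan is to recast the set-theoretic hypothesis as a polynomial identity and combine the factor theorem (Lemma \ref{easy}(i)) with its multiplicity refinement to bound $|U|$ from below. For each $x\in F$ the multisets $\{P(x+u)\}_{u\in U}$ and $\{P(x)+u\}_{u\in U}$ coincide, so as polynomials in $y$ we have
$$\prod_{u\in U}\bigl(y-P(x+u)\bigr) \;=\; \prod_{u\in U}\bigl(y-P(x)-u\bigr).$$
Extracting the constant-in-$y$ term gives the key identity
$$\prod_{u\in U} P(x+u) \;=\; \prod_{u\in U}\bigl(P(x)+u\bigr)\qquad (x\in F). \qquad (\star)$$

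Next I fix $u_0\in U$ and work with the discrete difference $\Delta(x):=P(x+u_0)-P(x)\in F[x]$, a polynomial of degree at most $n-1$. By hypothesis $\Delta(x)\in U$ for every $x\in F$, so
$$R(x) \;:=\; \prod_{v\in U}\bigl(\Delta(x)-v\bigr)\;\in\; F[x]$$
has degree at most $(n-1)|U|$ yet vanishes identically on $F$. This forces a dichotomy: either $R\not\equiv 0$ in $F[x]$, in which case Lemma \ref{easy}(i) gives $(n-1)|U|\geq|F|$ (already contradicting $|U|\leq|F|-n$ when $n=2$), or $R\equiv 0$, in which case one factor vanishes and $\Delta$ equals some element $v\in U$ as a polynomial in $x$. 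The latter possibility, by comparing top coefficients of the identity $P(x+u_0)=P(x)+v$, forces $\operatorname{char}(F)\mid n$ and constrains $P$ to be additive up to a constant.

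The main obstacle is sharpening the soft bound $|U|\geq|F|/(n-1)$ coming from the generic case to the claimed $|U|>|F|-n$, and handling the degenerate case where $P$ is essentially additive. For the first, I would sharpen the counting using Lemma \ref{easy-2}(i) applied to each $\Delta(x)-v$ separately (so that the multiplicity-weighted number of solutions to $\Delta(x)=v$ is at most $n-1$), then combine with $(\star)$: the difference of the two sides has degree at most $n|U|$ and vanishes on $F$, hence is divisible by $x^{|F|}-x$. Summing a carefully chosen monomial $x^{j}$ times this difference over $F$ via Lemma \ref{low}, with $j$ chosen so that $j+n|U|$ falls at a positive multiple of $|F|-1$, should isolate a coefficient comparison forcing $|F|-|U|<n$. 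The truly hard part will be the additive case, where the difference in $(\star)$ vanishes as a polynomial and the summation step carries no information, so one must instead exploit the linearised structure of $P$ directly to squeeze out the final strict inequality.
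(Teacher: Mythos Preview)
Your starting observation is correct: since $P(x)+U$ has $|U|$ distinct elements, the set equality forces $\{P(x+u)\}_{u\in U}$ and $\{P(x)+u\}_{u\in U}$ to agree as multisets, so for every integer $w\ge 1$ and every $x\in F$,
\[
\sum_{u\in U}P(x+u)^{w}\;=\;\sum_{u\in U}\bigl(P(x)+u\bigr)^{w}.
\]
The gap is that you retain only the product identity $(\star)$ and then attempt to extract information by summing $x^{j}D(x)$ over $x\in F$, where $D$ is the difference of the two sides of $(\star)$. But $D$ vanishes on all of $F$, so that sum is zero for every $j$ and yields nothing; the factorisation $D(x)=(x^{|F|}-x)E(x)$ does not help either, since you have no independent handle on $E$. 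Worse, in the contradiction regime $|U|\le|F|-n$ the degree $n|U|$ of each side of $(\star)$ is typically far larger than $|F|$, so $(\star)$ is \emph{not} a polynomial identity and direct coefficient comparison is unavailable. Your $\Delta$ argument gives only $|U|\ge|F|/(n-1)$, which for $n\ge 3$ is nowhere near $|U|>|F|-n$, and the ``additive'' case is left entirely open.

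The paper's proof uses the power-sum identity displayed above, but with $w$ small enough that $nw<|F|$. The difference of the two sides then has degree at most $nw<|F|$ and vanishes on $F$, hence is the zero polynomial by Lemma~\ref{easy}(i). Letting $r$ be the least positive integer with $\sum_{u\in U}u^{r}\neq 0$ (such $r\le|U|$ exists by Vandermonde), one computes that the $x^{\,nw-r}$ coefficient of $\sum_{u}\bigl(P(x+u)^{w}-P(x)^{w}\bigr)$ is nonzero whenever $r\le nw$, while $\sum_{u}\bigl((P(x)+u)^{w}-P(x)^{w}\bigr)$ has degree at most $nw-nr<nw-r$ since $n>1$. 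This forces $r>nw$, hence $|U|\ge r>nw$; taking $w$ maximal with $nw<|F|$ gives $nw\ge|F|-n$, and the claim follows. The essential idea you are missing is to work with \emph{low-degree} symmetric-function identities (power sums with small exponent $w$) so that vanishing on $F$ upgrades to a genuine polynomial identity amenable to coefficient comparison; your product $(\star)$ lives in the high-degree regime where this upgrade fails.
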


\begin{proof}  We can of course assume $n < |F|$.  For any natural number $w$, the polynomial
$$ x \mapsto \sum_{u \in U} P(x+u)^w - (P(x)+u)^w$$
is of degree at most $wn$ and vanishes on $F$.  Thus, if $wn < |F|$, we conclude from Lemma \ref{easy}(i) that this polynomial vanishes geometrically.  In particular, we have the identity
\begin{equation}\label{pxuw}
 \sum_{u \in U} P(x+u)^w - P(x)^w = \sum_{u \in U} (P(x)+u)^w - P(x)^w.
\end{equation}

From Vandermonde determinants, we know that $\sum_{u \in U} u^r$ is non-zero for at least one $1 \leq r \leq |U|$.  Let $r$ be the minimal positive integer with this property.  If $r \leq nw$, then we can compute that the $x^{nw-r}$ coefficient of the left-hand side of \eqref{pxuw} is non-zero, but that the right-hand side has degree at most $nw-nr$, a contradiction since $n>1$.  We conclude that $r > nw$, and hence $|U| > nw$.  Setting $w$ to be the largest integer such that $wn < |F|$, we obtain the claim.
\end{proof}

This proposition can be used to give a quick proof of a classic theorem of Burnside, that any transitive permutation group of prime degree is either doubly transitive or solvable; see \cite{muller}.  The main idea is that if a transitive permutation group $G$ on $\F_p$ is not doubly transitive, then after a relabeling one can create a proper subset $U$ of $\F_p$ with the property that $\pi(x+U)=\pi(x)+U$ for all $x \in \F_p$ and $\pi \in G$.  By viewing $\pi$ as a polynomial and using the above proposition, one can show that all permutations $\pi$ in $G$ are affine, giving solvability.

\end{document}